\numberwithin{equation}{section}
\renewcommand{\theequation}{\thesection$\cdot$\arabic{equation}}
\definecolor{my_color}{rgb}{0,0.5,0.5}
\definecolor{MIXT}{rgb}{0.4,0.3,0.6}
 \font\tencyr=wncyr10 
\font\tencyi=wncyi10 
\font\tencysc=wncysc10 
\def\rus{\tencyr\cyracc}
\def\rusi{\tencyi\cyracc}
\def\rusc{\tencysc\cyracc}
\newtheorem{thm}{Theorem}[section] 
\newtheorem{conj}[thm]{Conjecture}
\newtheorem{utv}[thm]{Claim}
\newtheorem*{utve}{Claim}
\newtheorem{lm}[thm]{Lemma}
\newtheorem{prop}[thm]{Proposition}
\theoremstyle{remark}
\newtheorem{rmk}[thm]{Remark}
\newtheorem*{rema}{Remark}
\theoremstyle{definition}
\newtheorem{ex}[thm]{Example}
\newtheorem*{ex-bn}{Example}
\newtheorem{df}{Definition}
\newenvironment{proof*}
{\noindent {\sl Proof.}\quad }{\hfill
$\square$}
\renewcommand{\theequation}{\thesection ${\cdot}$\arabic{equation}}
\newcommand {\ah}{{\mathfrak a}}
\newcommand {\ce}{{\mathfrak c}}
\newcommand {\fe}{{\mathfrak E}}
\newcommand {\g}{{\mathfrak g}}
\newcommand {\h}{{\mathfrak h}}
\newcommand {\ka}{{\mathfrak k}}
\newcommand {\el}{{\mathfrak l}}
\newcommand {\m}{{\mathfrak m}}
\newcommand {\p}{{\mathfrak p}}
\newcommand {\q}{{\mathfrak q}}
\newcommand {\es}{{\mathfrak s}}
\newcommand {\te}{{\mathfrak t}}
\newcommand {\z}{{\mathfrak z}}
\newcommand {\gln}{\mathfrak{gl}_n}
\newcommand {\sln}{\mathfrak{sl}_n}
\newcommand {\sltn}{\mathfrak{sl}_{2n}}
\newcommand {\spn}{\mathfrak{sp}_{2n}}
\newcommand {\sone}{\mathfrak{so}_{2n}}
\newcommand {\son}{\mathfrak{so}_{n}}
\newcommand {\fso}{\mathfrak{so}}
\newcommand {\eus}{\EuScript}
\newcommand {\esi}{\varepsilon}
\newcommand {\ap}{\alpha}
\newcommand {\lb}{\lambda}
\newcommand {\vp}{\varphi}
\newcommand {\cF}{{\mathcal F}}
\newcommand {\ck}{{\mathcal K}}
\newcommand {\MM}{{\mathcal M}}
\newcommand {\N}{{\mathcal N}}
\newcommand {\co}{{\mathcal O}}
\newcommand {\BZ}{{\mathbb Z}}
\newcommand {\BQ}{{\mathbb Q}}
\newcommand {\BR}{{\mathbb R}}
\newcommand {\CSS}{{\sf CSS}}
\newcommand {\CSA}{{\sf CSA}}
\newcommand {\ad}{{\mathrm{ad\,}}}
\newcommand {\codim}{{\mathrm{codim\,}}}
\newcommand {\Int}{{\mathrm{Int}}}
\newcommand {\Inv}{{\mathsf{Inv}}}
\newcommand {\Lie}{{\mathsf{Lie}}}
\newcommand {\Ker}{{\mathrm{Ker\,}}}
\newcommand {\Ima}{{\mathrm{Im\,}}}
\newcommand {\rk}{{\mathsf{rk}}}
\newcommand {\trdeg}{{\mathrm{trdeg\,}}}
\newcommand {\tri}{\mathfrak{sl}_2}
\newcommand {\GR}[2]{{\textrm{{\bf #1}}}_{#2}}
\newcommand {\GRt}[2]{{\tilde{\mathrm{{\bf #1}}}}_{#2}}
\newcommand {\ov}{\overline}
\newcommand {\un}{\underline}
\newcommand {\beq}{\begin{equation}}
\newcommand {\eeq}{\end{equation}}
\renewcommand{\le}{\leqslant}
\renewcommand{\ge}{\geqslant}
\newcommand{\vecs}{{\vec{\boldsymbol{\sigma}}}}
\newcommand {\bbk}{\Bbbk}
\begin{document}
\setlength{\parskip}{1pt plus 2pt minus 0pt}
\hfill {\scriptsize October 14. 2012}
\vskip1.5ex

\title[Commuting involutions, commuting varieties,  and simple Jordan algebras]
{Commuting involutions of Lie algebras, commuting varieties,  and simple Jordan algebras}
\author[D.\,Panyushev]{Dmitri I. Panyushev}
\address[]{
Institute for Information Transmission Problems of the R.A.S., 
\hfil\break\indent B. Karetnyi per. 19, Moscow 
127994, Russia
}
\email{panyushev@iitp.ru}
\subjclass[2010]{14L30, 17B08, 17B40, 17C20, 22E46}
\keywords{Semisimple Lie algebra, commuting variety, Cartan subspace, quaternionic decomposition, nilpotent orbit, Jordan algebra}
\begin{abstract}
Let $\sigma_1$ and $\sigma_2$ be commuting involutions of a connected 
reductive algebraic group $G$ with $\g=\Lie(G)$. Let $\g=\bigoplus_{i,j=0,1}\g_{ij}$ be the corresponding $\BZ_2\times \BZ_2$-grading. 
If $\{\ap,\beta,\gamma\}=\{01,10,11\}$, then $[\ ,\ ]:\g_\ap\times\g_\beta\to \g_\gamma$, and the zero-fibre of this bracket is called a $\vecs$-commuting variety. The commuting variety of $\g$ and
commuting varieties related to one involution are particular cases of this construction. We develop 
a general theory of such varieties and point out some cases, when they have especially good 
properties. If $G/G^{\sigma_1}$ is a Hermitian symmetric space of tube type, then one can 
find three conjugate pairwise commuting involutions $\sigma_1,\sigma_2$, and 
$\sigma_3=\sigma_1\sigma_2$. In this case, any $\vecs$-commuting variety is isomorphic to the commuting variety of the simple Jordan algebra associated with $\sigma_1$. As an application, we show that if $\eus J$ is the Jordan algebra of symmetric matrices, then the product map
$\eus J\times \eus J\to\eus J$ is equidimensional; while for all other simple Jordan algebras equidimensionality fails.
\end{abstract}
\maketitle

\tableofcontents
\section*{Introduction}
\noindent
The ground field $\bbk$ is algebraically closed and $\mathsf{char\,}\bbk=0$. 
Let $G$ be a connected reductive algebraic group with $\Lie(G)=\g$. In 1979, Richardson proved that any pair of commuting elements of $\g$ can be approximated by pairs of commuting semisimple elements~\cite{ri79}. More precisely, if $\te\subset \g$ is a Cartan subalgebra (\CSA\ for short), then
\beq   \label{eq:comm-var-g}
     \{(x,y)\in\g\times\g \mid [x,y]=0\}=\ov{G{\cdot}(\te\times\te)},
\eeq
where `bar' means the Zariski closure. The LHS is called the {\it commuting variety\/} of $\g$,
denoted $\fe(\g)$. That is, $\fe(\g)$ is the zero-fibre of the multiplication map
$\g\times\g\stackrel{[\ ,\ ]}{\longrightarrow}\g$. It follows from \eqref{eq:comm-var-g} that $\fe(\g)$ is 
irreducible and $\dim\fe(\g)=\dim\g+\rk\,\g$. For arbitrary Lie algebras, e.g. for Borel subalgebras of $\g$, the commuting variety 
can be reducible~\cite[p.\,237]{vasc}.

There are several directions for generalising Richardson's work. 
\\[.7ex]
\un{First}, for given subvarieties  $U,V\subset \g$, one can consider the restriction of $[\ ,\ ]$ to 
$U\times V$ and study properties of $\fe(\g)\cap( U\times V)$.  For instance: 

{\bf --}  \ Let  $\sigma$ be an involution of $\g$ with the corresponding $\BZ_2$-grading $\g=\g_0\oplus\g_1$.
Taking $U=V=\g_1$ yields the commuting variety 
$\fe(\g_1):=\fe(\g)\cap(\g_1\times\g_1)$, which was considered first in~\cite{compos94}.
Here the structure of $\fe(\g_1)$ heavily depends on $\sigma$. If $\g_1$ contains a \CSA\ of $\g$, then $\fe(\g_1)$ is an irreducible normal complete intersection \cite{compos94}. At the other extreme, if the symmetric space $G/G_0$ is of rank 1,
then $\fe(\g_1)$ is often reducible. In \cite{PY07}, the question of
irreducibility of $\fe(\g_1)$ is resolved for all but three involutions of simple Lie algebras, and the
remaining cases are settled in \cite{bulois}. It seems, however, that there is no simple rule to
distinguish the involutions for which $\fe(\g_1)$ is irreducible. 

{\bf --} \ Another natural possibility is to take $U=V=\N$, where $\N$ is the set of nilpotent elements of $\g$.
This leads to the {\it nilpotent commuting variety\/} of $\g$, $\fe(\N)$, which is often reducible. However,
$\fe(\N)$ is equidimensional, $\dim\fe(\N)=\dim\g$, and the structure of irreducible components is well understood~\cite{premet}.

{\bf --} \ An interesting situation with $U\ne V$ occurs if $\g=\oplus_{i\in\BZ}\g(i)$ is 
$\BZ$-graded, $U=\g(i)$, and $V=\g(-i)$, see \cite[Sect.\,3]{conorm99}. 
\\[.7ex]
\un{Second}, one may look at commuting varieties related to other types of algebras. If $\eus A$ is any algebra, then $\fe(\eus A)$ is defined to be the zero fibre of the multiplication map
$\eus A\times \eus A\to \eus A$.
It is a natural task to study the commuting variety of a simple Jordan algebra. 
As far as I know, this problem has not been addressed before.

In this article, we elaborate on both directions outlined above. We study 
certain ``commuting varieties'' associated with $\BZ_2\times\BZ_2$-gradings
of $\g$ (the first direction). It turns out that, for some gradings,
these new commuting varieties are isomorphic to the commuting variety of
simple Jordan algebras (the second direction). 
To describe our results more precisely, we need some notation.
Let $\sigma_1$ and $\sigma_2$ be different commuting involutions of a connected reductive 
algebraic group $G$. This yields a $\mathbb Z_2\times \mathbb Z_2$-grading of 
$\g$:
\beq   
\label{eq:decomp0}
\g=\bigoplus_{i,j=0,1}\g_{ij}, \ \text{ where }\ \g_{ij}=\{x\in\g\mid   \sigma_1(x)=(-1)^ix \ \ \& \ \ 
\sigma_2(x)=(-1)^jx\}.
\eeq
Then $\sigma_1,\sigma_2$, and $\sigma_3=\sigma_1\sigma_2$ are  pairwise 
commuting involutions, 
and following \cite{vergne} we say that \eqref{eq:decomp0}
is a {\it quaternionic decomposition} of $\g$. 
For, if $(\ap,\beta,\gamma)$ is any permutation of the set of indices $\{01,10,11\}$, then 
$[\g_{00},\g_\ap] \subset \g_\ap$ and  $[\g_\ap,\g_\beta] \subset \g_\gamma$.
The conjugacy classes of pairs of commuting 
involutions are classified, see \cite{kollross} and references therein. Therefore, it is not difficult
to write down explicitly all the quaternionic decompositions of simple Lie algebras.
This article is a continuation of 
\cite{comm-inv1}, where we developed some theory on Cartan subspaces related to 
\eqref{eq:decomp0} and studied invariants of degenerations
of isotropy representations involved.

Set $\vecs=(\sigma_1,\sigma_2, \sigma_3)$, and let $G_{00}$ denote the connected subgroup of
$G$ with Lie algebra $\g_{00}$.
A $\vecs$-{\it commuting variety\/} is the zero-fibre of the bracket
$[\ ,\ ]: \g_\ap\times\g_\beta\longrightarrow \g_\gamma$. Associated with 
\eqref{eq:decomp0}, one has three essentially different such varieties that are parameterised  
by the choice of $\gamma\in \{01,10,11\}$.  All these mappings are $G_{00}$-equivariant, and 
all $\vecs$-commuting varieties are $G_{00}$-varieties.
The above-mentioned  varieties $\fe(\g_1)$ can be obtained 
as a special case of this construction, see Example~\ref{ex:g=s+s}.
We usually stick to one particular choice of the commutator,
$\vp:\g_{10}\times\g_{11}\to \g_{01}$, and try to realise what assumptions on  
$\vecs$ imply good properties of $\fe:=\vp^{-1}(0)$ and other fibres of $\vp$.
Clearly,  $\vp$ can be regarded as a quadratic map from $\g_{1\star}:=\g_{10}\oplus\g_{11}$
to $\g_{01}$. Let $\ce_{1\star}$ be a Cartan subspace (=\CSS) in $\g_{1\star}$.
Say that $\ce_{1\star}$ is {\it homogeneous\/} if it is $\sigma_2$-stable (or, equivalently,
$\sigma_3$-stable), i.e., if $\ce_{1\star}=\ah_{10}\oplus\ah_{11}$ with 
$\ah_{1j}\subset \g_{1j}$. We prove that

{\textbullet}  \  if $\ce_{1\star}$ is a homogeneous \CSS, then the closure of
$G_{00}{\cdot}\ce_{1\star}$ is an irreducible component of $\fe$
(Theorem~\ref{thm:irr-comp-Fe}). (Such irreducible components are said to be {\it standard}). 
However, there can be several standard component, of different dimension; and there can also exist some ``non-standard'' irreducible components. 

{\textbullet}  \  All homogeneous \CSS\ in $\g_{1\star}$ are $G_{00}$-conjugate (i.e., $\fe$ has 
only one standard component) if and only if 
$\dim\ce_{1\star}= \dim\ce_{10}+\dim\ce_{11}$, where $\ce_{1j}$ are \CSS\ in $\g_{1j}$ 
(Theorem~\ref{thm:krit-odno-homog-CSS}).

{\textbullet}  \  $\vp$ is dominant if and only if there exist $x\in\g_{10}, y\in\g_{11}$ such that 
$\z_\g(x)_{01}\cap\z_\g(y)_{01}=\{0\}$. 
\\[.6ex]
However, one cannot expect really good properties for $\vp$ and $\fe$ without extra assumptions.
One natural assumption is that  
some of involutions in $\vecs$ are conjugate. Another possibility is that some of the $\sigma_i$'s 
possess prescribed properties.
Our more specific results are:

(1) \  If $\sigma_1,\sigma_2$ are conjugate, then $\vp$ is surjective
and $\dim\vp^{-1}(\xi)\ge \dim\g_{11}$ for all $\xi\in\g_{01}$ (Proposition~\ref{lm:dyad-onto}). 
We also provide a method for
detecting subvarieties of $\fe$ whose dimension is larger than $\dim\g_{11}$. This exploits certain restricted root systems related to decomposition~\eqref{eq:decomp0}, see Section~\ref{sect4}.

(2) \  If $\sigma_1,\sigma_2$ are involutions of maximal rank
(hence they are conjugate), then 
$\vp$ is surjective and equidimensional, each irreducible component of $\fe$ is standard, and
the scheme $\vp^{-1}(0)$ is a reduced complete intersection (Theorem~\ref{thm:main-EQ}).

(3) \  Let $\g$ be simple and $\sigma$ a Hermitian involution
(i.e., $\g^\sigma$ is not semisimple). If the Hermitian symmetric space $G/G^\sigma$ is of tube
type, then there exists a commuting triple $\vecs$ such that each $\sigma_i$ is conjugate to
$\sigma$, and in this case $\fe$ is isomorphic to the commuting variety of the corresponding 
simple Jordan algebra,  see Section~\ref{sect5}.

(4) \  The relationship with $\vecs$-commuting varieties implies that the multiplication map 
$\eus J\times\eus J\stackrel{\circ}{\to} \eus J$ is equidimensional if and only if $\eus J$ is the Jordan algebra of symmetric matrices.
The commuting variety of a simple Jordan algebra $\eus J$
is reducible, since $\eus J\times\{0\}$ and $\{0\}\times\eus J$ are always irreducible components; and there are certainly some other components.  

(5) \ Results stated in (2) rely on an interesting property of
$\BZ_2$-gradings. For any  $e\in\g_0$, its centraliser in $\g$ is also $\BZ_2$-graded:
$\g^e=\g^e_0\oplus\g^e_1$. Then we prove that \\
\centerline{$\dim\g^e_0+\rk\,\g \ge \dim\g^e_1$ }
and the equality occurs only if $e=0$ and $\sigma$ is of maximal rank. However, the proof of this inequality (Theorem~\ref{thm:strange-ineq}) is not quite uniform, and a better proof is welcome!
The required case-by-case calculations are lengthy and tedious, so that not all of them are actually presented, and a part of them is placed in Appendix~\ref{app:A}. We hope that an 
{\sl a priori\/} proof of this inequality might be related to a geometric property of centralisers of nilpotent elements in $\g_0$, see Conjecture~\ref{conj:a-la-alela}.

{\bf --}  \  Throughout, $G$ is a connected reductive algebraic group and $\g=\Lie(G)$. 
Then 
$\z_\g(\ah)$ is the 
centraliser
of a subspace  $\ah\subset\g$,               
and 
the centraliser 
of $x\in\g$ is denoted by $\z_\g(x)$ or $\g^x$.

{\bf --}  \  $\mathsf R(\lb)$ is a simple finite-dimensional $G$-module with highest weight $\lb$.

{\bf --}  \  Algebraic groups are denoted by capital Roman letters and their 
Lie algebras are denoted by the corresponding lower-case gothic letters.

\section{Preliminaries on involutions and commuting varieties} 
\label{sect1}

\noindent
The set of all involutions of $\g$ is denoted by $\mathsf{Inv}(\g)$. 
The group of inner automorphisms  
$\mathsf{Int}(G)\simeq G/Z(G)$ acts on $\mathsf{Inv}(\g)$ by conjugation.
Two involutions are said to be {\it conjugate}, if they lie in the same $\mathsf{Int}(G)$-orbit.
If $\sigma\in\mathsf{Inv}(\g)$, then 
$\g=\g_0\oplus\g_1$ is the corresponding $\mathbb Z_2$-grading of
$\g$, where $\g_i=\{x\in\g\mid \sigma(x)=(-1)^ix\}$. We also say that 
$(\g,\g_0)$ is a {\it symmetric pair}.
Whenever we wish to stress that $\g_0$ and $\g_1$ 
are determined by $\sigma$, we write $\g^\sigma$ and $\g_1^{(\sigma)}$ for them.
We assume that $\sigma$ is induced by an involution of $G$, which is denoted by the same letter.
The connected subgroup of $G$ with Lie algebra $\g_0$ is denoted by $G_0$.
Hence $G_0$ is the identity component of $G^\sigma=\{g\in G\mid \sigma(g)=g\}$.
The representation of $G_0$ in $\g_1$ 
is the {\it isotropy representation\/}  of the symmetric space $G/G_0$.

We freely use invariant-theoretic results on the  $G_0$-action 
on $\g_1$ obtained in \cite{kr71}.  
A {\it Cartan subspace\/} (=\CSS) is a maximal subspace of $\g_1$ consisting of pairwise 
commuting semisimple elements. 
The Cartan subspaces are characterised by the following property: 
\begin{itemize}
\item[\refstepcounter{equation}(\theequation)\label{char-prop}]   
{\it Suppose that a subspace $\ah\subset \g_1$ consists of pairwise commuting
semisimple elements. Then $\ah$ is a \CSS\ if and only if $\z_\g(\ah)\cap\g_1=\ah$}
\ \cite[Ch.\,I]{kr71}.
\end{itemize}

\noindent
An element $x\in\g_1$ is called 
$G_0$-{\it regular\/} if the orbit $G_0{\cdot}x$ is of maximal dimension.
Let $\ce$ be a \CSS\ of $\g_1$. 
Below, we summarise some basic properties of the Cartan subspaces and isotropy representations:
\begin{itemize}
\item[\sf --]  All \CSS\ of $\g_1$ are $G_0$-conjugate and $G_0{\cdot}\ce$ is dense in $\g_1$;
\item[\sf --]  Every semisimple element of $\g_1$ is $G_0$-conjugate to an element of $\ce$;
\item[\sf --]  A semisimple element $x\in\g_1$ is $G_0$-regular  \ $\Leftrightarrow$ \ $\z_\g(x)\cap\g_1$ is a \CSS;
\item[\sf --]   The orbit $G_0{\cdot}x$ is closed if and only if $x$ is semisimple;
\item[\sf --]  The closure of $G_0{\cdot}x$ contains the origin if and only if $x$ is nilpotent;
\item[\sf --]  The number of nilpotent $G_0$-orbits in $\g_1$ is finite.
\end{itemize}

\textbullet\quad 
We say that $\sigma\in\mathsf{Inv}(\g)$ is 
{\it  of maximal rank\/} if $\g_1$ contains  a  Cartan subalgebra of $\g$.

\noindent
As is well known, (1) \ $\dim\g_1-\dim\g_0\le \rk\,\g$  for any $\sigma$, and the equality holds
if and only if $\sigma$ is of maximal rank;
(2) \ all involutions of maximal rank are conjugate;
(3) \ the involutions of maximal rank are inner {\sl if and only if\/} all exponents of $\g$ are odd.

\begin{lm}[\protect {\cite[Prop.\,5]{kr71}}]  \label{prop.5}
For any $x\in \g_1$, one has  $\dim\g_0-\dim\g^x_0=\dim\g_1-\dim\g^x_1$. Equivalently,
$\dim G{\cdot}x=2\dim G_0{\cdot}x$ \ for all $x\in \g_1$.
\end{lm}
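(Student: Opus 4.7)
The plan is to use a $\sigma$-invariant non-degenerate symmetric bilinear form on $\g$ to exhibit the two sides of the equality as ranks of mutually transposed linear maps.

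First, since $G$ is reductive and $\sigma$ is an involution, one can choose a non-degenerate $G$-invariant symmetric bilinear form $K$ on $\g$ (e.g.\ the Killing form plus a suitable form on the centre) and average it over $\{1,\sigma\}$ to make it $\sigma$-invariant. Then $\g_0\perp \g_1$ with respect to $K$, and the restrictions $K|_{\g_0}$ and $K|_{\g_1}$ are non-degenerate.

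Next, fix $x\in\g_1$. Because $[\g_1,\g_j]\subset\g_{1+j}$, the operator $\ad x$ sends $\g_0$ into $\g_1$ and $\g_1$ into $\g_0$. So I get two linear maps
\[
\vp_0:=\ad x|_{\g_0}:\g_0\to\g_1,\qquad \vp_1:=\ad x|_{\g_1}:\g_1\to\g_0,
\]
whose kernels are exactly $\g_0^x=\z_\g(x)\cap\g_0$ and $\g_1^x=\z_\g(x)\cap\g_1$. From ad-invariance of $K$ one has, for $u\in\g_0$ and $v\in\g_1$,
\[
K(\vp_0(u),v)=K([x,u],v)=-K(u,[x,v])=-K(u,\vp_1(v)).
\]
Using the non-degenerate restrictions of $K$ to $\g_0$ and $\g_1$ to identify these with their duals, this says that $\vp_1$ is (up to sign) the transpose of $\vp_0$. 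Hence $\rank\vp_0=\rank\vp_1$, which by the rank--nullity theorem gives
\[
\dim\g_0-\dim\g_0^x=\dim\g_1-\dim\g_1^x,
\]
as required.

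For the equivalent formulation, $\dim G_0{\cdot}x=\dim\g_0-\dim\g_0^x$ (since $\Lie(G_0)^x=\g_0^x$), while $\dim G{\cdot}x=\dim\g-\dim\g^x=(\dim\g_0-\dim\g_0^x)+(\dim\g_1-\dim\g_1^x)$, and the first identity doubles the first summand. I do not expect a serious obstacle here: the only point that needs care is choosing a $\sigma$-invariant non-degenerate invariant form in the reductive (as opposed to semisimple) case, and once one knows the Killing form is already $\sigma$-invariant and makes $\g_0\perp\g_1$ on the semisimple part, the extension to $\g$ is routine.
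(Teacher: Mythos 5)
Your argument is correct and complete: the paper does not prove this lemma but simply cites \cite[Prop.\,5]{kr71}, and your proof — orthogonality of $\g_0$ and $\g_1$ for a $\sigma$-invariant non-degenerate invariant form, plus the observation that $\ad x|_{\g_0}:\g_0\to\g_1$ and $\ad x|_{\g_1}:\g_1\to\g_0$ are mutually transpose up to sign and hence of equal rank — is exactly the standard Kostant--Rallis argument. The passage to the orbit-dimension formulation is also fine, using that $\g^x$ is $\sigma$-stable for $x\in\g_1$.
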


Consequently, if $\sigma$ is of maximal rank, then 
\beq   \label{ravenstvo-max-rank}
     \dim\g^x_1=\dim\g^x_0+\rk\,\g .
\eeq
The property of having maximal rank is inheritable in the following sense.

\begin{lm}   \label{lm:inherit}
Let $\sigma$ be of maximal rank and $x\in\g_1$  semisimple.
Then the restriction of $\sigma$ to  $\g^x$ and 
$[\g^x,\g^x]$ is also of maximal rank.
\end{lm}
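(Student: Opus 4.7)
The plan is to exhibit, directly from Lemma~\ref{prop.5} and the standard identity $\rk\,\g^x=\rk\,\g$ for semisimple $x$, a Cartan subalgebra of $\g^x$ contained in $\g_1$; this immediately yields maximal rank for $\sigma\vert_{\g^x}$, and a short argument on the centre $Z(\g^x)$ will then transfer the property to $[\g^x,\g^x]$.

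First I would verify that $\g^x$ is $\sigma$-stable. For $y\in\g^x$, since $\sigma(x)=-x$,
\[
[\sigma(y),x]=-[\sigma(y),\sigma(x)]=-\sigma([y,x])=0,
\]
so $\sigma(y)\in\g^x$. Thus $\sigma$ restricts to an involution on the reductive algebra $\g^x$, giving a $\BZ_2$-grading $\g^x=\g^x_0\oplus\g^x_1$ with $\g^x_i=\g^x\cap\g_i$. Applying \eqref{ravenstvo-max-rank} to $(\sigma,x)$ gives $\dim\g^x_1-\dim\g^x_0=\rk\,\g$, and since any Cartan subalgebra of $\g$ containing the semisimple element $x$ lies in $\g^x$ we have $\rk\,\g^x=\rk\,\g$. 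Hence $\dim\g^x_1-\dim\g^x_0=\rk\,\g^x$; by the characterisation recalled just before \eqref{ravenstvo-max-rank}, this means $\sigma\vert_{\g^x}$ is of maximal rank.

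For the claim about $[\g^x,\g^x]$, the crucial point is $Z(\g^x)\subset\g_1$. Now that $\sigma\vert_{\g^x}$ is known to be of maximal rank, a \CSS\ $\ah\subset\g^x_1$ is a genuine Cartan subalgebra of $\g^x$; as $Z(\g^x)$ is toral and contained in every Cartan subalgebra of $\g^x$, it sits in $\ah\subset\g_1$. The $\sigma$-stable decomposition $\g^x=Z(\g^x)\oplus[\g^x,\g^x]$ then forces $[\g^x,\g^x]_0=\g^x_0$ and $\dim[\g^x,\g^x]_1=\dim\g^x_1-\dim Z(\g^x)$, whence
\[
\dim[\g^x,\g^x]_1-\dim[\g^x,\g^x]_0=\rk\,\g^x-\dim Z(\g^x)=\rk[\g^x,\g^x],
\]
so $\sigma\vert_{[\g^x,\g^x]}$ is of maximal rank as well.

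No serious obstacle is anticipated: the only non-formal input is the standard fact that $\g^x$ is reductive of the same rank as $\g$, and everything else is bookkeeping with Lemma~\ref{prop.5}. The mildly subtle step is placing $Z(\g^x)$ inside $\g_1$, but this is forced as soon as the maximal-rank property on $\g^x$ has been established.
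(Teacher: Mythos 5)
Your proof is correct. The paper in fact states Lemma~\ref{lm:inherit} without proof, so there is nothing to compare against directly; the argument one would expect is the conjugation argument (every semisimple $x\in\g_1$ is $G_0$-conjugate into a fixed \CSS\ $\ce$, which for $\sigma$ of maximal rank is a \CSA\ $\te$ of $\g$; then $\te\subset\g^x\cap\g_1$ is a \CSA\ of $\g^x$, and $\te\cap[\g^x,\g^x]$ is a \CSA\ of $[\g^x,\g^x]$ inside $\g_1$). Your route instead combines the numerical identity \eqref{ravenstvo-max-rank} with $\rk\,\g^x=\rk\,\g$ and the equality criterion $\dim\g_1-\dim\g_0=\rk\,\g$; this is valid (the criterion does hold for the reductive algebra $\g^x$, since maximal rank forces the centre into the $(-1)$-eigenspace), and it has the small advantage of not invoking the conjugacy of Cartan subspaces. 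The handling of $[\g^x,\g^x]$ via $Z(\g^x)\subset\g_1$ is also sound, though once a \CSA\ $\ah\subset\g^x_1$ is in hand one can simply observe that $\ah\cap[\g^x,\g^x]$ is a \CSA\ of $[\g^x,\g^x]$ lying in $\g_1$, which shortens the bookkeeping.
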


The {\it commuting variety\/} associated with $\sigma$ is 
\beq      \label{eq:sigma-com-var}
      \fe(\g_1)=\{(x,y)\in \g_1\times\g_1 \mid [x,y]=0\} .
\eeq
That is, $\fe(\g_1)$ is the zero-fibre of the commutator map $[\ ,\ ]_1:\g_1\times \g_1\to \g_0$. It is known that 
\begin{itemize}
\item \ $\ov{G_0{\cdot}(\ce\times\ce)}$ is always an irreducible component of 
$\fe(\g_1)$~\cite[Prop.\,3.7]{compos94};
\item \ if $\sigma$ is of maximal rank, then $\ov{G_0{\cdot}(\ce\times\ce)}=\fe(\g_1)$ and
$\g_1\times\g_1\to \g_0$ is equidimensional~\cite[Theorem\,3.2]{compos94}; moreover, all the fibres 
of $[\ ,\ ]_1$ are irreducible and normal~\cite[Cor.\,4.4]{compos94}.
\item \ $\fe(\g_1)$ can be reducible~\cite[Example\,3.5]{compos94}. 
\end{itemize}

\begin{ex}  \label{ex:usual-com-var}
Suppose that $\tilde\g=\g\oplus\g$ and $\sigma(x,y)=(y,x)$. Then $\tilde\g_0=\Delta(\g)$  and 
$\tilde\g_1=\{(x,-x)\mid x\in\g\}$. Here the commutator $\tilde\g_1\times\tilde\g_1\to \tilde\g_0$ coincides
with the usual commutator $\g\times\g\to\g$ and $\fe(\tilde\g_1)$ is isomorphic to the usual commuting variety of a semisimple Lie algebra $\g$. By a result of Richardson~\cite{ri79}, $\fe(\g)$ is irreducible and $\dim\fe(\g)=\dim\g+\rk \g$.
\end{ex}

A torus $S$ of $G$ is called $\sigma$-{\it anisotropic\/}, if $\sigma(s)=s^{-1}$ for all $s\in S$. 
All maximal $\sigma$-anisotropic tori are $G_0$-conjugate, and if $C\subset G$ is a maximal $\sigma$-anisotropic torus, then  $\Lie(C)$ is a \CSS\ in 
$\g_{1}$.
Recall that a {\it restricted root\/} of $C$  is 
any non-trivial weight in the decomposition of $\g$ into the sum 
of weight spaces of $C$. Write $\Psi ^{C}(G/G_0)$ or just $\Psi (G/G_0)$ for the set of all restricted 
roots. Then
\beq   \label{eq:restr-root}
   \g=\g^C\oplus\bigl(\bigoplus_{\gamma\in\Psi(G/G_0)}\g_\gamma \bigr) .
\eeq
We use the additive notation for the operation in $\mathfrak X(C)$, the character 
group of $C$, and regard $\Psi(G/G_0)$ as a subset of the vector space
$\mathfrak X(C)\otimes_\BZ\BR$.
The set 
$\Psi(G/G_0)$ satisfies the usual axioms of finite root systems \cite{helg}. The notable difference from 
the structure theory of split semisimple Lie algebras is that the root system 
$\Psi(G/G_0)$ can be non-reduced and that multiplicities $m_\gamma=\dim \g_\gamma$
($\gamma\in\Psi(G/G_0)$) can be greater than $1$.

For all involutions of simple Lie algebras, the restricted root systems and the respective multiplicities are known, see \cite[Ch.\,X, Table\,VI]{helg}.

\section{Commuting involutions and quaternionic decompositions}
\label{sect1.5}

Let $\sigma_1$ and $\sigma_2$ be different commuting involutions of $\g$. 
The corresponding $\mathbb Z_2\times\mathbb Z_2$-grading of $\g$ is:
\beq  \label{eq:quatern_summa}
\g=\bigoplus_{i,j=0,1}\g_{ij}, \ \text{ where }\ \g_{ij}=\{x\in\g\mid   \sigma_1(x)=(-1)^ix \ \ \& \ \ 
\sigma_2(x)=(-1)^jx\}.
\eeq
We also say that it is a {\it quaternionic decomposition\/} of $\g$ 
(determined by $\sigma_1$ and $\sigma_2$). Set $\sigma_3:=\sigma_1\sigma_2$ and
$\vecs=(\sigma_1,\sigma_2,\sigma_3)$.
The pairwise commuting involutions $\sigma_1,\sigma_2$, and $\sigma_3$ are said to be {\it big}.
The induced involutions on the fixed-point subalgebras 
$\g^{\sigma_1},\g^{\sigma_2},\g^{\sigma_3}$
are said to be {\it little}. The same terminology applies to the corresponding $\BZ_2$-gradings, isotropy representations, and \CSS.
Thus, associated with \eqref{eq:quatern_summa}, one has three big 
and three little $\BZ_2$-gradings.
It is convenient for us to organise the summands of  \eqref{eq:quatern_summa}
in a $2\times 2$ ``matrix'':

\hbox to \textwidth{\enspace \refstepcounter{equation} (\theequation) 
\hfil   \label{eq:quatern_matrix}
{\setlength{\unitlength}{0.02in}
\begin{picture}(35,27)(0,5)
\put(-12,7){$\g=$}
    \put(8,15){$\g_{00}$}    \put(28,15){$\g_{01}$}
    \put(8,3){$\g_{10}$}      \put(28,3){$\g_{11}$}
\qbezier[20](5,9),(22,9),(40,9)            
\qbezier[20](23,-1),(23,11),(23,24)      
\put(19.9,7){$\oplus$}   
\put(21,-7){{\color{my_color}$\sigma_2$}}
\put(43,7){{\color{my_color}$\sigma_1$}}
\end{picture}  \hfil
}}
\vskip2.5ex

\noindent
Here the horizontal (resp. vertical) dotted line separates the eigenspaces of  $\sigma_1$ 
(resp. $\sigma_2$), whereas two diagonals of this matrix represent the eigenspaces
of $\sigma_3$.
Hence the first row,  first column, and the main diagonal
represent the three little $\BZ_2$-gradings  (of $\g^{\sigma_1}$,
$\g^{\sigma_2}$, and
$\g^{\sigma_3}$, respectively).

We repeatedly use the following notation for the eigenspaces of $\sigma_1$ and 
$\sigma_2$:

$\g^{\sigma_1}=\g_{0\star}:=\g_{00}\oplus\g_{01}$, \ $\g_{1\star}:=\g_{10}\oplus\g_{11}$, \qquad 
$\g^{\sigma_2}=\g_{\star 0}:=\g_{00}\oplus\g_{10}$, \ $\g_{\star 1}:=\g_{01}\oplus\g_{11}$.
\vskip.6ex\noindent
Likewise, $G_{0\star}$ (resp. $G_{\star 0}$) is the connected subgroup of $G$ corresponding to $\g_{0\star}$ (resp. $\g_{\star 0}$), 
$G_{00}$ is the connected subgroup of $G$ corresponding to $\g_{00}$, etc. 
If $\q$ is a $\vecs$-stable subalgebra of $\g$, then
$\q=\bigoplus_{i,j} \q_{ij}$ is the induced quaternionic decomposition of $\q$, and  $Q, Q_{00}$ are the corresponding connected subgroups.

Following Vinberg \cite[0.3]{vinb}, we say that a triple $\{\sigma_1, \sigma_2, \sigma_3\}
\subset\Inv(\g)$ is a {\it triad\/} if all three involutions are conjugate and 
$\sigma_1\sigma_2=\sigma_3$. A complete classification of triads is obtained in 
\cite[Sect.\,3]{vinb}. The triads lead to the ``most symmetric'' quaternionic  decompositions.
In \cite{comm-inv1}, we considered less restrictive conditions on the $\sigma_i$'s.
We say that  $\{\sigma_1, \sigma_2\}\subset \Inv(\g)$
is a {\it dyad\/} if $\sigma_1, \sigma_2$
are conjugate and $\sigma_1\sigma_2=\sigma_2\sigma_1$ (no conditions on $\sigma_3$!).

The product of two conjugate involutions (not necessarily commuting) is always an
inner automorphism of $\g$. 
For, if $\sigma_2=\Int(g){\cdot}\sigma_1{\cdot}\Int(g^{-1})$, then
$\sigma_1\sigma_2=\Int(\sigma_1(g)g^{-1})$.
Therefore, any triad consists of
inner involutions (but not any inner involution gives rise to a triad!). 
However, any involution can be a member of a dyad  \cite[Prop.\,2.4]{comm-inv1}.
But the third involution, $\sigma_3$, is then necessarily inner.
 
\begin{prop}[\protect see {\cite[Prop.\,2.2(1)]{comm-inv1}}]    \label{prop:povtor}
Suppose that $\mu\in\Inv(\g)$ is  inner. 
Then there are  commuting
involutions of maximal rank, $\sigma_1$ and $\sigma_2$,  such that  $\mu=\sigma_1\sigma_2$.
Moreover,  $\sigma_1$ and $\sigma_2$ induce an involution of maximal rank of
$\g^{\mu}$.
\end{prop}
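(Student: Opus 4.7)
The plan is to construct $\sigma_1$ as the Chevalley involution associated with a Cartan subalgebra contained in $\g^\mu$, and then set $\sigma_2=\mu\sigma_1$. Since $\mu$ is inner, write $\mu=\Int(t)$ with $t\in G$ semisimple and $t^2\in Z(G)$. Choose a maximal torus $T\ni t$ and put $\te=\Lie(T)$; then $\te$ is a \CSA\ of $\g$ contained in $\h:=\g^\mu$. Pick a Chevalley basis $\{e_\alpha\}_{\alpha\in\Delta}\cup\{h_i\}$ of $\g$ relative to $\te$, and let $\sigma_1$ be the Chevalley involution determined by it: $\sigma_1|_\te=-1$ and $\sigma_1(e_\alpha)=-e_{-\alpha}$. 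Since $\te\subset\g_1^{(\sigma_1)}$, the involution $\sigma_1$ has maximal rank in the paper's sense.

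The key step is verifying $\sigma_1\mu=\mu\sigma_1$. Because $t^2\in Z(G)$, every root satisfies $\alpha(t)\in\{\pm 1\}$, and $(-\alpha)(t)=\alpha(t)^{-1}=\alpha(t)$; hence $\mu$ acts trivially on $\te$ and by the same scalar $\alpha(t)$ on both $e_\alpha$ and $e_{-\alpha}$. A direct computation gives $\sigma_1\mu(e_\alpha)=-\alpha(t)e_{-\alpha}=\mu\sigma_1(e_\alpha)$, and commutation on $\te$ is trivial. Now define $\sigma_2:=\sigma_1\mu=\mu\sigma_1$. Then $\sigma_2^2=\sigma_1^2\mu^2=1$, the product $\sigma_1\sigma_2$ equals $\mu$, and $\sigma_2|_\te=-1$, which places $\te$ inside $\g_1^{(\sigma_2)}$, so $\sigma_2$ has maximal rank as well. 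This yields the first assertion.

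For the moreover part, both $\sigma_1$ and $\sigma_2$ commute with $\mu$, so both preserve $\h$. On $\h$, the composition $\sigma_1|_\h\circ\sigma_2|_\h$ equals $\mu|_\h=1$, so $\sigma_1|_\h=\sigma_2|_\h$: the two big involutions induce the same involution on $\h$. It has maximal rank in $\h$ because $\te$ is a \CSA\ of $\h$ (its centraliser in $\g$ is $\te$ itself, so the same holds in $\h$) and $\sigma_1|_\te=-1$.

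The only substantive step is the commutation check in the middle paragraph, and it is purely root-theoretic: an inner involution acts with matching signs on the $\pm\alpha$ root spaces, which is precisely what makes it commute with a Chevalley involution that swaps them. No classification or case-by-case argument is needed, and the construction is uniform in the Lie type of $\g$.
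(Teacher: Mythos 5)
Your argument is correct. Note that the paper itself gives no proof of this proposition --- it only cites \cite[Prop.\,2.2(1)]{comm-inv1} --- and your construction (write $\mu=\Int(t)$ with $t$ semisimple and $t^2\in Z(G)$, take the Chevalley involution $\sigma_1$ of a \CSA\ $\te\subset\g^\mu$ containing $t$ in its torus, check commutation via $\alpha(t)=(-\alpha)(t)=\pm1$, and set $\sigma_2=\sigma_1\mu$) is the standard and essentially the intended one. The only point left tacit is that the Chevalley involution of $\g$ is induced by an involution of the group $G$ itself, which the paper's conventions require; this is standard but worth a word.
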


 For $(ij)\ne (00)$, let $\ce_{ij}$ be a \CSS\ of $\g_{ij}$; that is, 
a little  \CSS\ related to the little $\BZ_2$-grading 
$\g_{00}\oplus\g_{ij}$. 
There are also big \CSS\ in the $(-1)$-eigenspaces of three big involutions:
\vskip.7ex
\centerline{
$\ce_{1\star}\subset \g_{1\star}$, \ 
$\ce_{\star 1}\subset \g_{\star 1}$, \ 
$\ce_{\star,1-\star}\subset \g_{\star,1-\star}:=\g_{01}\oplus\g_{10}$.
}
\vskip.8ex\noindent
Each little \CSS\ can be included in two big \CSS. E.g., because $\g_{10}\subset
\g_{1\star}$ and $\g_{10}\subset\g_{\star,1-\star}$, one can choose Cartan subspaces
$\ce_{1\star}$ and 
$\ce_{\star,1-\star}$ such that 
$\ce_{10}\subset \ce_{1\star}$ and $\ce_{10}\subset \ce_{\star,1-\star}$.
If at least one equality occurs among all such inclusions, then this will be referred to as a {\it
coincidence\/} of \CSS\ (for a given quaternionic decomposition).

In \cite{comm-inv1}, we obtained two sufficient conditions for a coincidence of \CSS:

\begin{thm}[\protect see {\cite[Thm.\,3.3 \& 3.7]{comm-inv1}}]   \label{thm:povtor}
\leavevmode\par
\begin{enumerate}
\item \  Suppose that $\sigma_1$ is of maximal rank. Then 
\begin{itemize}
\item \  any little  \CSS\ \ $\ce_{11}\subset \g_{11}$
is also a \CSS\ \ in $\g_{\star 1}$, i.e., for $\sigma_2$; 
\item \  any little \CSS\ \ $\ce_{10}\subset \g_{10}$
is also a \CSS\ \ in $\g_{10}\oplus\g_{01}$, i.e., for $\sigma_3$.
\end{itemize}
\item \  Suppose that $\{\sigma_1,\sigma_2\}$ is a dyad. 
Then any little \CSS\ \ $\ce_{11}\subset \g_{11}$
is also a \CSS\ \ in $\g_{1\star}$ or $\g_{\star 1}$, i.e., for $\sigma_1$ or $\sigma_2$.
\end{enumerate}
\end{thm}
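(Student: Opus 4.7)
The plan is to verify characterisation \eqref{char-prop} directly. For each assertion, set $\h:=\z_\g(\ce)$, where $\ce$ is the given little CSS ($\ce=\ce_{11}$, or $\ce=\ce_{10}$ in the second bullet of (1)); this is a $\vecs$-stable reductive subalgebra and inherits the quaternionic decomposition $\h=\bigoplus_{ij}\h_{ij}$. Applying \eqref{char-prop} to $\ce$ inside its own $\g_{ij}$ gives $\h\cap\g_{ij}=\ce$, so $\ce\subset\z(\h)$, and the corresponding summand of $\h':=[\h,\h]$ vanishes. Each bullet of (1) thus becomes the claim $\h_{01}=0$, and (2) becomes the disjunction $\h_{01}=0$ or $\h_{10}=0$.

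For part (1), choose $x\in\ce$ with $\z_\g(x)=\h$. Since $x\in\g_{1\star}$ is semisimple and $\sigma_1$ is of maximal rank, Lemma~\ref{lm:inherit} gives that $\sigma_1|_\h$ and $\sigma_1|_{\h'}$ are of maximal rank. Every CSA of $\h$ contains $\z(\h)$, and some CSA lies in $\h_{1\star}$; hence $\z(\h)\subset\h_{1\star}$, which kills $\z(\h)_{01}$. To kill $\h'_{01}$, I combine the maximal-rank equality for $\sigma_1|_{\h'}$ with the universal bound $\dim\g_1-\dim\g_0\le\rk\g$ applied to a second involution ($\sigma_2$ or $\sigma_3$). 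For the first bullet $\h'_{11}=0$, and pairing $\sigma_1|_{\h'}$ with $\sigma_3|_{\h'}$ gives
\begin{equation*}
\dim\h'_{10}-\dim\h'_{00}-\dim\h'_{01}=\rk\h', \qquad \dim\h'_{01}+\dim\h'_{10}-\dim\h'_{00}\le\rk\h';
\end{equation*}
subtraction yields $2\dim\h'_{01}\le 0$. For the second bullet $\h'_{10}=0$, and pairing $\sigma_1|_{\h'}$ with $\sigma_2|_{\h'}$ yields the analogous pair of relations, once more forcing $\h'_{01}=0$.

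For part (2), the maximal-rank leverage is unavailable: $\z(\h)$ may stray outside $\h_{1\star}$, and the three rank inequalities for $\sigma_i|_{\h'}$ (with $\h'_{11}=0$) are all symmetric under the swap $(01)\leftrightarrow(10)$, so the dimensional subtraction stalls. My plan is to add new input from the dyad: by \cite[Prop.\,2.4]{comm-inv1}, $\sigma_3$ is inner, so Proposition~\ref{prop:povtor} supplies commuting involutions $\tau_1,\tau_2$ of maximal rank on $\g$ with $\tau_1\tau_2=\sigma_3$ that induce maximal-rank involutions on $\g^{\sigma_3}=\g_{00}\oplus\g_{11}$. Restricting this auxiliary max-rank structure to $\h^{\sigma_3}$ and combining it with the three rank inequalities and with the $\sigma_1\leftrightarrow\sigma_2$ symmetry coming from the dyad conjugacy, I expect to obtain $\dim\h'_{01}\cdot\dim\h'_{10}=0$; the central terms $\z(\h)_{01},\z(\h)_{10}$ are then ruled out by a maximality argument, because a nonzero semisimple in either would enlarge $\ce_{11}$ to a strictly larger commuting semisimple subspace of $\g_{\star 1}$ or $\g_{1\star}$, conflicting with the dyad equality $\dim\ce_{1\star}=\dim\ce_{\star 1}$. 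The main obstacle is this asymmetric ``or'' conclusion: since the rank inequalities are symmetric in $(01)$ and $(10)$, the dyad hypothesis must enter in a non-dimensional way, and I anticipate that a clean proof ultimately needs the explicit restricted-root data of dyads tabulated in \cite{comm-inv1}.
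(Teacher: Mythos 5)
First, a point of comparison: the paper does not prove Theorem~\ref{thm:povtor} at all — it is imported from \cite[Thm.\,3.3 \& 3.7]{comm-inv1} — so there is no internal proof to measure you against. Judged on its own, your argument for part (1) is complete and correct: the reduction via \eqref{char-prop} to the vanishing of $\h_{01}$ for $\h=\z_\g(\ce)$, the splitting $\h=\z(\h)\oplus[\h,\h]$ with $\z(\h)\subset\h_{1\star}$ forced by Lemma~\ref{lm:inherit}, and the subtraction of the maximal-rank equality for $\sigma_1|_{\h'}$ from the universal inequality $\dim\g_1-\dim\g_0\le\rk\g$ applied to $\sigma_3|_{\h'}$ (resp.\ $\sigma_2|_{\h'}$), giving $2\dim\h'_{01}\le 0$, all check out.

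Part (2), however, is a research plan rather than a proof, and the plan as written does not close. Two concrete problems. First, the auxiliary involutions $\tau_1,\tau_2$ supplied by Proposition~\ref{prop:povtor} are only known to commute with each other and to satisfy $\tau_1\tau_2=\sigma_3$; nothing makes them commute with $\sigma_1$ or $\sigma_2$, nor preserve $\h=\z_\g(\ce_{11})$ (which depends on the chosen $\ce_{11}$), so ``restricting this auxiliary max-rank structure to $\h^{\sigma_3}$'' is not a well-defined step and cannot be fed into your $\h'_{ij}$ bookkeeping. Second, the proposed elimination of the central pieces is circular: nonzero semisimple elements in \emph{both} $\z(\h)_{01}$ and $\z(\h)_{10}$ would enlarge $\ce_{11}$ by equal amounts inside $\g_{\star 1}$ and $\g_{1\star}$ respectively, which is perfectly consistent with the equality $\dim\ce_{\star 1}=\dim\ce_{1\star}$; that equality excludes nothing. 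You have correctly diagnosed the essential difficulty — every dimension relation extractable from the three restricted involutions on $\h'$ is symmetric under $(01)\leftrightarrow(10)$, while the conclusion is an asymmetric disjunction — but the conjugacy $\sigma_1\sim\sigma_2$ enters your argument only through dimension counts, which by your own observation cannot suffice. Part (2) therefore remains unproven, and one should simply fall back on the citation to \cite{comm-inv1} for it.
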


The coincidences of \CSS\ in Theorem~\ref{thm:povtor}(2) can formally be expressed as 
$\ce_{11}=\ce_{1\star}$ or $\ce_{11}=\ce_{\star 1}$, and likewise in all other possible cases. 
In view of~\eqref{char-prop}, any coincidence of \CSS\ can be restated as certain property of the
little \CSS\ in question. For instance, 
the first coincidence in Theorem~\ref{thm:povtor}(1) means that if $x\in \g_{11}$ is a generic
semisimple element (i.e., $x$ belong to a unique little \CSS), then $\z_\g(x)_{\star 1}=\z_\g(x)_{11}=\ce_{11}$, and hence $\z_\g(x)_{01}=0$.

\section{Commuting varieties and homogeneous Cartan subspaces}  
\label{sect2}

\noindent
Consider a quaternionic decomposition~\eqref{eq:quatern_matrix}.
For any permutation $(\ap,\beta,\gamma)$ of the set $\{01,10,11\}$, there is
the commutator mapping $\vp_{\ap,\beta}^\gamma: \g_\ap\times \g_\beta\to \g_\gamma$.
Clearly, $\vp_{\ap,\beta}^\gamma$ is $G_{00}$-equivariant.
As our main interest is in fibres of this mapping, we do not distinguish 
$\vp_{\ap,\beta}^\gamma$ and $\vp_{\beta,\ap}^\gamma$.
We concentrate on the following problems:
\begin{itemize}
\item \quad When is $\vp_{\ap,\beta}^\gamma$ dominant?
\item \quad What is the dimension of $(\vp_{\ap,\beta}^\gamma)^{-1}(0)$ ?
\item \quad How to describe the irreducible components of $(\vp_{\ap,\beta}^\gamma)^{-1}(0)$ ?
\item \quad  When is $\vp_{\ap,\beta}^\gamma$ equidimensional?
\end{itemize}
The variety $\fe_{\ap,\beta}^\gamma=(\vp_{\ap,\beta}^\gamma)^{-1}(0)$ is said to be a 
$\vecs$-{\it  commuting variety}. For general quaternionic decompositions, one has  three 
such varieties, and their properties can be rather different.
We mainly restrict ourselves with considering the test case:
\beq \label{eq:comm-map}
   \vp=\vp_{10,11}^{01}: \g_{10}\times \g_{11}\to \g_{01} .
\eeq
and also write $\fe$ in place of $\fe_{10,11}^{01}$. Note that we can regard $\vp$  as a quadratic 
map from $\g_{1\star}$ to $\g_{01}$, and $\fe$ as 
subvariety of $\g_{1\star}$. The following example shows that the commuting variety in
\eqref{eq:sigma-com-var} is a particular case of this construction.

\begin{ex}  \label{ex:g=s+s}
Let $\g$ be a reductive Lie algebra and $\sigma$ an involution of
$\g$ with the corresponding $\BZ_2$-grading  $\g=\g_0\oplus\g_1$.
Set $\tilde\g=\g\oplus\g$  and define three involutions of $\tilde\g$ as follows:

\centerline{$\sigma_1(x_1,x_2)=(\sigma(x_1),\sigma(x_2))$, \quad
$\sigma_2(x_1,x_2)=(x_2,x_1)$, \quad
$\sigma_3=\sigma_1\sigma_2$.}

\noindent
Then $\tilde\g^{\sigma_1}=\g_0\oplus\g_0$; \quad 
$\tilde\g^{\sigma_2}=\Delta(\g)$, the diagonal in $\g\oplus\g$; \quad 
$\tilde\g^{\sigma_3}=\{ (x,\sigma(x))\mid x\in\g \}$. 
\\
Set $\Delta_-(M):=\{(m,-m)\mid m\in M\}$ for any subspace $M\subset \g$.
Then the  corresponding quaternionic decomposition is: 
\begin{center}
\setlength{\unitlength}{0.02in}
\begin{picture}(90,28)(-10,2)
\put(-25,7){$\tilde\g=$}
    \put(3,15){$\Delta(\g_0)$}    \put(40,15){$\Delta_-(\g_0)$}
    \put(3,0){$\Delta(\g_1)$}      \put(40,0){$\Delta_-(\g_1)$}
\qbezier[40](-5,9),(30,9),(70,9)            
\qbezier[20](33,-3),(33,11),(33,24)      
\put(29.9,7){$\oplus$}   
\put(31,-11){{\color{my_color}$\sigma_2$}}
\put(80,7){{\color{my_color}$\sigma_1$}}
\end{picture}  
\end{center}
\vskip3ex
Upon the obvious identifications $\Delta(\g_1)\simeq \Delta_-(\g_1)\simeq \g_1$, etc., our 
test commutator
map $\tilde\g_{10}\times \tilde\g_{11}\to \tilde\g_{01}$ becomes the commutator 
$\g_1\times\g_1\to \g_0$ associated with $\sigma\in \mathsf{Inv}(\g)$; whereas two other 
commutator maps are identified with the bracket $\g_0\times \g_1\to \g_1$. Therefore, the concept of a $\vecs$-commuting variety provides a uniform setting for studying the fibres of 
both $\g_1\times\g_1\to \g_0$ and $\g_0\times \g_1\to \g_1$.
\end{ex}

\begin{lm}        \label{lm:dominant}
Commutator map \eqref{eq:comm-map}
 is dominant if and only if there exist $x\in\g_{10}$ and $y\in\g_{11}$ such that 
 $\z_\g(x)_{01}\cap\z_\g(y)_{01}=\{0\}$.
\end{lm}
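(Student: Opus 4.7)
The plan is to use the standard dominance criterion in characteristic zero: $\vp$ is dominant if and only if its differential $d\vp_{(x,y)}: \g_{10}\times\g_{11}\to \g_{01}$ is surjective at some point $(x,y)\in\g_{10}\times\g_{11}$. Since $\vp$ is bilinear, Leibniz gives
\[
   d\vp_{(x,y)}(u,v)=[u,y]+[x,v],\qquad u\in\g_{10},\ v\in\g_{11}.
\]
Hence $\vp$ is dominant if and only if there exist $x\in\g_{10}$, $y\in\g_{11}$ with
$[\g_{10},y]+[x,\g_{11}]=\g_{01}$.

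Next I would identify the orthogonal complement of this image with respect to a suitable invariant non-degenerate symmetric bilinear form $\langle\ ,\ \rangle$ on $\g$ (the Killing form if $\g$ is semisimple, and any invariant non-degenerate extension in the reductive case). Because the form is invariant under each $\sigma_i$, one has $\langle\g_{ij},\g_{kl}\rangle=0$ unless $(i,j)=(k,l)$, so $\langle\ ,\ \rangle$ restricts to a non-degenerate pairing on each homogeneous component $\g_{ij}$. Using the invariance identity $\langle[a,b],c\rangle=\langle a,[b,c]\rangle$, an element $z\in\g_{01}$ satisfies $\langle z,[u,y]\rangle=0$ for all $u\in\g_{10}$ if and only if $\langle u,[y,z]\rangle=0$ for all $u\in\g_{10}$; since $[y,z]\in\g_{10}$ and the form is non-degenerate on $\g_{10}$, this is equivalent to $[y,z]=0$, i.e. $z\in \z_\g(y)_{01}$. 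An identical argument shows that $z\in\g_{01}$ is orthogonal to $[x,\g_{11}]$ if and only if $z\in \z_\g(x)_{01}$.

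Combining these two observations, the orthogonal complement of $[\g_{10},y]+[x,\g_{11}]$ in $\g_{01}$ equals $\z_\g(x)_{01}\cap\z_\g(y)_{01}$. Hence $d\vp_{(x,y)}$ is surjective precisely when this intersection is trivial, which yields the claimed equivalence. No serious obstacle arises; the only point that needs care is verifying that the Killing (or invariant) form pairs $\g_{ij}$ non-degenerately with itself and annihilates all other components, which is immediate from the $\sigma_i$-invariance of the form.
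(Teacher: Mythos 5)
Your proof is correct and follows the same route as the paper: the dominance criterion via surjectivity of the differential at some point, the computation $d\vp_{(x,y)}(u,v)=[u,y]+[x,v]$, and the identification of the orthogonal complement of the image in $\g_{01}$ with $\z_\g(x)_{01}\cap\z_\g(y)_{01}$ via the invariant form. You merely spell out in more detail the non-degeneracy of the pairing on each $\g_{ij}$, which the paper leaves implicit.
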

\begin{proof}
A morphism of irreducible varieties is dominant if and only if its differential at some point is onto.
As $\vp$ is bilinear, an easy computation shows that  
$\textsl{d}\vp_{(x,y)}(\xi,\eta)=[x,\eta]+[\xi,y]$, $\xi\in\g_{10}$, $\eta\in\g_{11}$. Hence
$\Ima\textsl{d}\vp_{(x,y)}=[\g_{11},x]+[\g_{10},y]$, and taking the orthogonal complement with 
respect to the restriction of the Killing form to $\g_{01}$ yields
$(\Ima\textsl{d}\vp_{(x,y)})^\perp =\z_\g(x)_{01}\cap\z_\g(y)_{01}$.
\end{proof}

As we see below, certain \CSS\ in $\g_{1\star}$ play an important  role in describing irreducible
components of $\fe$.

\begin{df}
A big Cartan subspace $\ce_{1\star}\subset\g_{1\star}$ is said to be {\it homogeneous\/} if
it is $\sigma_2$-stable (or, equivalently,  $\sigma_3$-stable). In other words, if one has 
$\ce_{1\star}=\ah_{10}\oplus \ah_{11}$ with $\ah_{1j}\subset \g_{1j}$.
\end{df}

{\it Remark.} A coincidence of \CSS\ means that there is a homogeneous \CSS\ of special form.
For instance,  if $\ce_{11}=\ce_{1\star}$, then $\ce_{11}$ is a 
homogeneous \CSS\ in $\g_{1\star}$, with trivial $\g_{10}$-component.
\begin{lm}
{\textsf (1)} \ Homogeneous \CSS\ always exist. \\
{\textsf (2)} \ Moreover, if $x\in \g_{10}, y\in\g_{11}$ are commuting semisimple elements, then there exists a homogeneous \CSS\ in $\g_{1\star}$ containing both of them.
\end{lm}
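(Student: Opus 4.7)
The plan is to prove (1) directly by a two-step construction, then derive (2) from (1) by applying it to an appropriate centraliser.

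For (1), first choose any little \CSS\ $\ah_{10}\subset\g_{10}$ with respect to the symmetric pair $(\g^{\sigma_2},\g_{00})$. Set $\n:=\z_\g(\ah_{10})$: this is reductive (as $\ah_{10}$ consists of pairwise commuting semisimple elements) and, because $\sigma_1(\ah_{10})=-\ah_{10}$ while $\sigma_2$ fixes $\ah_{10}$ pointwise, is stable under both $\sigma_1$ and $\sigma_2$. Hence $\n$ inherits a quaternionic decomposition $\n=\bigoplus_{i,j}\n_{ij}$, and $\n_{10}=\ah_{10}$ by~\eqref{char-prop}. Next, choose a little \CSS\ $\ah_{11}\subset\n_{11}$ with respect to the symmetric pair $(\n^{\sigma_3},\n_{00})$, in which $\n_{11}$ is the $(-1)$-eigenspace of $\sigma_1$. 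Then set $\ce_{1\star}:=\ah_{10}\oplus\ah_{11}$, which is $\sigma_2$-stable by construction, consists of commuting semisimple elements (since $\ah_{10}\subset Z(\n)$), and lies in $\g_{1\star}$.

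To verify maximality via~\eqref{char-prop}, I compute
\[
\z_\g(\ce_{1\star})=\z_\g(\ah_{10})\cap\z_\g(\ah_{11})=\n\cap\z_\g(\ah_{11})=\z_\n(\ah_{11}),
\]
and intersect with $\g_{1\star}\cap\n=\n_{1\star}=\ah_{10}\oplus\n_{11}$. The $\ah_{10}$-summand lies entirely in $\z_\n(\ah_{11})$ as $\ah_{10}$ is central in $\n$, while $\z_\n(\ah_{11})\cap\n_{11}=\ah_{11}$ because any element in this intersection already belongs to $\n^{\sigma_3}$, and $\ah_{11}$ is a \CSS\ of $\n_{11}$ inside $\n^{\sigma_3}$ by construction. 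Hence $\z_\g(\ce_{1\star})\cap\g_{1\star}=\ah_{10}\oplus\ah_{11}=\ce_{1\star}$, proving (1).

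For (2), apply (1) to $\m:=\z_\g(x)\cap\z_\g(y)$. Since $x,y$ are commuting semisimple, $\m$ is reductive; and since each $\sigma_i$ sends $x$ and $y$ to scalar multiples of themselves, $\m$ is $\sigma_1,\sigma_2$-stable with its own quaternionic decomposition. Part (1) produces a homogeneous \CSS\ $\ce_{1\star}=\ah_{10}\oplus\ah_{11}\subset\m_{1\star}$. Because $x,y\in Z(\m)\cap\m_{1\star}$, they are contained in $\ce_{1\star}$; the homogeneous splitting forces $x\in\ah_{10}$ and $y\in\ah_{11}$. Finally, $\ce_{1\star}$ is a \CSS\ of the ambient $\g_{1\star}$: the containment $x,y\in\ce_{1\star}$ gives $\z_\g(\ce_{1\star})\subset\z_\g(x)\cap\z_\g(y)=\m$, so~\eqref{char-prop} for $\ce_{1\star}\subset\g_{1\star}$ reduces to the same property inside $\m_{1\star}$, already established. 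The argument presents no real obstacle; the only care required is the bookkeeping between big and little \CSS\ properties at each step.
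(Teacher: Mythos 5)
Your proposal is correct and follows essentially the same route as the paper: build a homogeneous \CSS\ by taking a little \CSS\ $\ce_{10}$, passing to the $\vecs$-stable reductive centraliser $\z_\g(\ce_{10})$, and choosing a little \CSS\ in its $(11)$-component; then deduce (2) by applying this inside $\z_\g(x)\cap\z_\g(y)$, whose central elements $x,y$ must lie in any \CSS. The only difference is that you spell out the maximality verifications via~\eqref{char-prop}, which the paper leaves implicit.
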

\begin{proof}
1) Take a little \CSS\ $\ce_{10}$ and consider the $\vecs$-stable reductive
subalgebra $\z_\g(\ce_{10})$. If $\tilde\ah_{11}$ is a little \CSS\ in $\z_\g(\ce_{10})_{11}$, then
$\ce_{10}\oplus\tilde\ah_{11}$ is a homogeneous \CSS\ in $\g_{1\star}$.
\\
2) Consider the $\vecs$-stable reductive subalgebra  $\el=\z_\g(x)\cap\z_\g(y)$.
By the previous part, there exists a homogeneous \CSS\ in $\el_{1\star}$, say $\tilde\ce_{1\star}$.
Since $x,y$ are central in $\el$, we have $x,y\in \tilde\ce_{1\star}$.
It is also clear that $\tilde\ce_{1\star}$ is a \CSS\ in $\g_{1\star}$.
\end{proof}
If  $\ce_{1\star}=\ah_{10}\oplus \ah_{11}$ is a homogeneous \CSS, then $[\ah_{01}, \ah_{11}]=0$ 
and hence $\ov{G_{00}{\cdot}\ce_{1\star}}\subset \fe$. However, a stronger result is true.

\begin{thm}    \label{thm:irr-comp-Fe}
\leavevmode\par
\begin{itemize}
\item[\sf(i)] \ Let $\ce_{1\star}$ be a homogeneous \CSS\ in $\g_{1\star}$. Then
$\ov{G_{00}{\cdot}\ce_{1\star}}\subset \fe$ is an irreducible component of $\fe$.
\item[\sf(ii)] \ If two homogeneous \CSS\ in $\g_{1\star}$ are not $G_{00}$-conjugate, then the corresponding irreducible components are different.
\end{itemize}
\end{thm}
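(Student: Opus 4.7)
\begin{proof*}
The plan is to prove (i) by a tangent-space dimension count and then deduce (ii) from the characterisation~\eqref{char-prop} of \CSS. Set $\el:=\z_\g(\ce_{1\star})$, a $\vecs$-stable reductive subalgebra; by~\eqref{char-prop} applied to $\sigma_1$ one has $\el_{1\star}=\ce_{1\star}$. Writing $\ce_{1\star}=\ah_{10}\oplus\ah_{11}$, the inclusion $\overline{G_{00}\cdot\ce_{1\star}}\subset\fe$ follows from $[\ah_{10},\ah_{11}]=0$, and irreducibility is immediate. Using the orbit map $\mu\colon G_{00}\times\ce_{1\star}\to\g_{1\star}$, $(g,z)\mapsto \Ad g(z)$, and observing that, for generic $z_0\in\ce_{1\star}$, the fibre $\mu^{-1}(z_0)$ is a finite union of cosets of $Z_{G_{00}}(z_0)$ (indexed by the little Weyl group $N_{G_{00}}(\ce_{1\star})/Z_{G_{00}}(\ce_{1\star})$), I would obtain
\[
\dim\overline{G_{00}\cdot\ce_{1\star}}\;=\;\dim\g_{00}-\dim\el_{00}+\dim\ce_{1\star}.
\]

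On the other hand, for generic $(x,y)\in\ah_{10}\times\ah_{11}$ one has $\z_\g(x)\cap\z_\g(y)=\el$, so the proof of Lemma~\ref{lm:dominant} gives $(\Ima\textsl{d}\vp_{(x,y)})^{\perp}=\el_{01}$, whence
\[
\dim T_{(x,y)}\fe\;\le\;\dim\ker\textsl{d}\vp_{(x,y)}\;=\;\dim\g_{1\star}-\dim\g_{01}+\dim\el_{01}.
\]
The crux is to match these two numbers. I would apply Lemma~\ref{prop.5} to $\sigma_1$ at a generic $x\in\ce_{1\star}\subset\g_1^{(\sigma_1)}$ (so $\g^x=\el$); the resulting equality $\dim\g^{\sigma_1}-\dim\g_1^{(\sigma_1)}=\dim\el^{\sigma_1}-\dim\el_1^{(\sigma_1)}$, expanded in the quaternionic grading and combined with $\dim\el_{1\star}=\dim\ce_{1\star}$, yields exactly
\[
\dim\g_{00}-\dim\el_{00}+\dim\ce_{1\star}\;=\;\dim\g_{1\star}-\dim\g_{01}+\dim\el_{01}.
\]
Thus $\overline{G_{00}\cdot\ce_{1\star}}$ is an irreducible closed subvariety of $\fe$ whose dimension meets the tangent-space bound at $(x,y)$, so it is an irreducible component of $\fe$ (and $(x,y)$ is a smooth point of $\fe$).

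For (ii), suppose two homogeneous \CSS\ $\ce_{1\star}$ and $\ce_{1\star}'$ give the same closure $V$. Each of $G_{00}\cdot\ce_{1\star}$ and $G_{00}\cdot\ce_{1\star}'$ is constructible of dimension $\dim V$ and therefore contains a dense open subset of $V$; pick $w$ in their intersection with $w=g\,z=g'z'$, $z\in\ce_{1\star}$, $z'\in\ce_{1\star}'$ both generic in their respective \CSS. By~\eqref{char-prop} for $\sigma_1$, $\z_{\g_{1\star}}(w)$ equals both $g\,\ce_{1\star}$ and $g'\,\ce_{1\star}'$, so the two \CSS\ are $G_{00}$-conjugate, contradicting the hypothesis. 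The main obstacle in the whole argument is recognising that the dimension identity needed for (i) is exactly Lemma~\ref{prop.5} applied to $\sigma_1$ at a generic point of $\ce_{1\star}$; the rest is bookkeeping with the quaternionic grading.
\end{proof*}
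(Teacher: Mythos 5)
Your proof is correct and follows essentially the same route as the paper: compute $\dim\ov{G_{00}{\cdot}\ce_{1\star}}$ via the generic stabiliser, bound the local dimension of $\fe$ at a generic point of $\ce_{1\star}$ by $\dim\Ker \textsl{d}\vp_{(x,y)}$ using Lemma~\ref{lm:dominant}, match the two numbers, and deduce (ii) from the fact that a generic point of a homogeneous \CSS\ lies on a unique component and determines its \CSS. The only cosmetic difference is that the paper obtains the matching identity from the density $\ov{G_{0\star}{\cdot}\ce_{1\star}}=\g_{1\star}$, whereas you invoke Lemma~\ref{prop.5} for $\sigma_1$ at a generic point of $\ce_{1\star}$ --- these are the same Kostant--Rallis fact.
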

\begin{proof}
(i) The centraliser of $\ce_{1\star}$ is $\vecs$-stable. Hence
$\z_{\g}(\ce_{1\star})=\displaystyle \bigoplus_{i,j=0,1} \ah_{ij}$, and here $\ce_{1\star}=\ah_{10}\oplus\ah_{11}$. Recall that 
$\ov{G_{0\star}{\cdot}\ce_{1\star}}=\g_{1\star}$. Therefore,
   $\dim\ce_{1\star}+\dim G_{0\star}-\dim\ah_{00}-\dim\ah_{01}=\dim \g_{1\star}$.
It follows that 
\beq   \label{eq:dim-razneseniya}
  \dim \ov{G_{00}{\cdot}\ce_{1\star}}=\dim\ce_{1\star}+ \dim G_{00} -\dim\ah_{00}=\dim\g_{1\star}
  -\dim\g_{01}+\dim\ah_{01} .
\eeq
On the other hand, let $x+y\in\ce_{1\star}$ ($x\in\g_{10}, y\in\g_{11}$).
The proof of Lemma~\ref{lm:dominant} shows that  
$\dim(\Ima \textsl{d}\vp_{(x,y)})=\dim\g_{01}-\dim (\z_\g(x)_{01}\cap\z_\g(y)_{01})$.
Now, if $x+y\in\ce_{1\star}$ is generic, then $\z_\g(x)\cap\z_\g(y)=\z_\g(x+y)=\z_\g(\ce_{1\star})$.
Hence $\dim(\Ima \textsl{d}\vp_{(x,y)})=\dim\g_{01}-\dim \ah_{01}$. This means that any irreducible
component of $\fe$ containing $(x,y)$ has dimension at most 
\[
 \dim\g_{1\star}-\dim(\Ima \textsl{d}\vp_{(x,y)}) =\dim\g_{1\star}-\dim\g_{01}+\dim \ah_{01} .
\] 
Comparing with \eqref{eq:dim-razneseniya} shows that 
$\ov{G_{00}{\cdot}\ce_{1\star}}$ is an irreducible component of $\fe$ containing $(x,y)$,
and $(x,y)$ is a smooth point of $\ov{G_{00}{\cdot}\ce_{1\star}}$.

(ii) As we have just shown, if $(x,y)\in \ce_{1\star}$ is generic, then it belongs to a unique irreducible component of $\fe$ (and to a unique \CSS\ in $\g_{1\star}$).
\end{proof}

\begin{utv}  \label{thm:finitely-odnor-CSS}
 The number of $G_{00}$-orbits of homogeneous \CSS\ in $\g_{1\star}$ is finite.
\end{utv}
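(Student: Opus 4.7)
The plan is to deduce this directly from Theorem~\ref{thm:irr-comp-Fe}, which does almost all the work. That theorem produces, for each homogeneous \CSS\ $\ce_{1\star}\subset\g_{1\star}$, an irreducible component of $\fe$, namely $\ov{G_{00}{\cdot}\ce_{1\star}}$, and moreover guarantees that non-$G_{00}$-conjugate homogeneous \CSS\ yield genuinely different irreducible components of $\fe$. In other words, part (ii) of that theorem already asserts that the assignment
\[
  \{G_{00}\text{-orbits of homogeneous \CSS\ in }\g_{1\star}\} \;\longrightarrow\; \{\text{irreducible components of }\fe\},
  \qquad G_{00}{\cdot}\ce_{1\star}\longmapsto \ov{G_{00}{\cdot}\ce_{1\star}},
\]
is well-defined and injective.

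Hence it suffices to observe that the target is finite. But $\fe=\vp^{-1}(0)$ is a closed subvariety of the affine (hence Noetherian) variety $\g_{10}\times\g_{11}$, so it has only finitely many irreducible components. Combining this with the injectivity above yields the claim.

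The only thing that requires any thought is making sure Theorem~\ref{thm:irr-comp-Fe} really has been proved in the form we need, i.e. that a single homogeneous \CSS\ produces exactly one irreducible component (not several) and that the identification $G_{00}{\cdot}\ce_{1\star}\leftrightarrow$ component is unambiguous; both are clear from the theorem's statement and proof (the generic $(x,y)\in\ce_{1\star}$ is a smooth point lying on a unique component, and lies on a unique homogeneous \CSS). No additional ingredients are needed; in particular, this argument does not require any classification of the quaternionic decompositions.
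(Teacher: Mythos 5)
Your argument is correct and is essentially identical to the paper's own first proof, which likewise deduces finiteness from Theorem~\ref{thm:irr-comp-Fe} together with the fact that an algebraic variety has only finitely many irreducible components. (The paper also records a second, independent proof via Richardson's theory of twisted orbits, but that is not needed for the claim itself.)
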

\noindent
{\it First proof.} \ Since the number of irreducible components is finite, this readily follows from
Theorem~\ref{thm:irr-comp-Fe}. However, this can also be proved in a 
different way. As the second proof has its own merits,  we provide it below.
\begin{proof}[Second proof]
Recall that $G_{00}\subset G_{0\star}$ are connected reductive groups and
\un{all} big \CSS\ in $\g_{1\star}$ form  a single $G_{0\star}$-orbit. 
Let $\ce_{1\star}$ be a homogeneous \CSS. Set 
\begin{gather*}
   N=\{g\in G_{0\star} \mid g{\cdot}\ce_{1\star}=\ce_{1\star} \} ,  \\
   \MM=\{g\in G_{0\star} \mid g{\cdot}\ce_{1\star} \text{ is homogeneous }\} .
\end{gather*}
Note that $N$ is reductive, but not connected, since $N$ is mapped onto the (finite) little Weyl group associated with $\ce_{1\star}$. 
If $g\in \MM$, $s\in G_{00}$, and $z\in N$, then $sgz\in \MM$. Therefore, 
$\MM$ is a union of  $(G_{00},N)$-cosets, and our task is to prove that
$G_{00}\backslash \MM/N$ is finite.

If $g\in\MM$, then $g{\cdot}\ce_{1\star}=\sigma_2(g)\ce_{1\star}$. Hence 
$g^{-1}\sigma_2(g)\in N$. Since $G_{00}\subset G^{\sigma_2}$, the map 
\[
  \psi_\MM:G_{00}\backslash\MM\to N,  \quad G_{00}g\mapsto g^{-1}\sigma_2(g)
\] 
is well-defined. Note that $N$ is $\sigma_2$-stable and 
the range of $\psi_\MM$ belongs to the closed subset 
\[
   \mathcal Q=\mathcal Q(N)=\{g\in N\mid \sigma_2(g)=g^{-1}\} .
\]
The twisted $N$-action on $N$ is defined by $z\star x=zx\sigma_2(z)^{-1}$. Obviously,
$\mathcal Q$ is stable under the twisted action of $N$.
Moreover, $\psi_\MM(gz)=z^{-1}\psi_\MM(g)\sigma_2(z)$. Hence
$\Ima(\vp_\MM)\subset \mathcal Q$ is the union of twisted $N$-orbits, and each twisted $N$-orbit gives rise to a $G_{00}$-orbit of homogeneous \CSS.
It follows from~\cite[Sect.\,9]{ri82} that $\mathcal Q$ is a finite union of twisted $N$-orbits, which 
is sufficient for our purpose. (See also remark below.)
\end{proof}

\begin{rmk}
Richardson's results on twisted orbits \cite[Sect.\,9]{ri82}, specifically Proposition~9.1, are 
stated for a {\sl connected\/} reductive group $G$, whereas we apply them to the reductive 
non-connected group $N$ (in place of $G$). But his argument can easily be adjusted to cover 
the case ofvnon-connected reductive groups.
That is, one can give a version of Richardson's Proposition~9.1 for non-connected groups $G$.
\end{rmk}

\begin{df}
For a homogeneous \CSS\ $\ce_{1\star}\subset \g_{1\star}$, the irreducible component 
$\ov{G_{00}{\cdot}\ce_{1\star}}\subset \fe$ is said to be {\it standard}.
\end{df}
Since all big \CSS\ in $\g_{1\star}$ are $G_{0\star}$-conjugate, their centralisers in $\g_{0\star}$ are 
essentially ``the same''. The centraliser in $\g_{0\star}$ of a homogeneous \CSS\ splits, and these
splittings can be quite different. That is,  $\dim\z_\g(\ce_{1\star})_{01}$  can be different
for different homogeneous \CSS, and this leads to a new phenomenon that 
standard irreducible components of $\fe$ may have different dimensions, 
cf.~\eqref{eq:dim-razneseniya}.
Moreover, there can also be some ``non-standard'' irreducible components of $\fe$ 
that contain no semisimple elements at all.

By Theorem~\ref{thm:irr-comp-Fe}, a necessary condition for $\fe$ to be irreducible is that 
all homogeneous \CSS\ in $\g_{1\star}$ are $G_{00}$-conjugate, i.e.,
there is only one standard component. 
If $\ce_{1\star}=\ah_{10}\oplus\ah_{11}$ is a homogeneous \CSS\ with
$\dim\ah_{1i}=d_i$, then $(d_0,d_1)$ is called the {\it dimension vector}. 
Obviously, two homogeneous \CSS\ with different dimension vectors are not $G_{00}$-conjugate.

\begin{thm}    \label{thm:krit-odno-homog-CSS}
1)  If $\ce_{1\star}=\ah_{10}\oplus\ah_{11}$ is a homogeneous \CSS\ with  dimension vector $(d_0,d_1)$, then
$d_0\le\dim\ce_{10}$ and $d_1\le\dim\ce_{11}$; hence $\dim\ce_{1\star}\le \dim\ce_{10}+
\dim\ce_{11}$.

2) All homogeneous \CSS\ in $\g_{1\star}$ are $G_{00}$-conjugate if and only if\/ 
$\dim\ce_{1\star}= \dim\ce_{10}+\dim\ce_{11}$.
\end{thm}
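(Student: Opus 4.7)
Part~1 is a direct consequence of the characterisation \eqref{char-prop}. In any homogeneous \CSS\ $\ce_{1\star}=\ah_{10}\oplus\ah_{11}$, each summand $\ah_{1j}\subset\g_{1j}$ is a subspace of pairwise commuting semisimple elements, hence is contained in some little \CSS\ of $\g_{1j}$; so $d_j\le\dim\ce_{1j}$, and summing gives the asserted bound.

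For the ``only if'' direction of Part~2, I would exploit the two complementary constructions of homogeneous \CSS\ provided by the existence lemma preceding Theorem~\ref{thm:irr-comp-Fe}: starting from a little \CSS\ $\ce_{10}\subset\g_{10}$ one produces a homogeneous \CSS\ of the form $\ce_{10}\oplus\tilde\ah_{11}$, with dimension vector $(\dim\ce_{10},d_1')$, where $\tilde\ah_{11}$ is a little \CSS\ of $\z_\g(\ce_{10})_{11}$; starting instead from a little \CSS\ $\ce_{11}\subset\g_{11}$ one analogously obtains one of dimension vector $(d_0',\dim\ce_{11})$. Since $G_{00}$-conjugate homogeneous \CSS\ have identical dimension vectors, the hypothesis forces these two vectors to coincide, whence $d_0'=\dim\ce_{10}$ and $d_1'=\dim\ce_{11}$; therefore $\dim\ce_{1\star}=\dim\ce_{10}+\dim\ce_{11}$.

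For the ``if'' direction, the equality combined with Part~1 forces $d_0=\dim\ce_{10}$ and $d_1=\dim\ce_{11}$ in every homogeneous \CSS\ $\ah_{10}\oplus\ah_{11}$; in particular, $\ah_{10}$ and $\ah_{11}$ are full little \CSS\ of $\g_{10}$ and $\g_{11}$, respectively. Given two such homogeneous \CSS, $\ah_{10}\oplus\ah_{11}$ and $\ah_{10}'\oplus\ah_{11}'$, $G_{00}$-conjugacy of little \CSS\ of $\g_{10}$ reduces the problem to the case $\ah_{10}=\ah_{10}'$. Setting $L=\z_\g(\ah_{10})$, a $\vecs$-stable reductive subalgebra, both $\ah_{11}$ and $\ah_{11}'$ lie in $L_{11}$ (they commute with $\ah_{10}$), and the key step is to verify via \eqref{char-prop} that they are little \CSS\ of $L_{11}$ for the little $\BZ_2$-grading $L^{\sigma_3}=L_{00}\oplus L_{11}$ induced by $\sigma_1$. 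The standard conjugacy theorem for Cartan subspaces of symmetric pairs, applied to $L$, then supplies an element of the connected subgroup of $G$ with Lie algebra $L_{00}\subset\g_{00}$ that carries $\ah_{11}$ to $\ah_{11}'$; such an element lies in $G_{00}$ and fixes $\ah_{10}$ pointwise, completing the $G_{00}$-conjugation.

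The main technical obstacle is the verification in the ``if'' direction that a little \CSS\ of $\g_{11}$ contained in $L_{11}$ is automatically a little \CSS\ of $L_{11}$ for the appropriate induced involution. This is a short application of the centraliser criterion \eqref{char-prop}, but it requires careful bookkeeping of which of $\sigma_1,\sigma_2,\sigma_3$ plays the role of the little involution on the $\vecs$-stable subalgebra $L$.
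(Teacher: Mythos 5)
Your proposal is correct and follows essentially the same route as the paper: Part 1 by embedding each $\ah_{1j}$ in a little \CSS; the ``only if'' direction by producing the two homogeneous \CSS\ with dimension vectors $(\dim\ce_{10},\,\dim\ce_{1\star}-\dim\ce_{10})$ and $(\dim\ce_{1\star}-\dim\ce_{11},\,\dim\ce_{11})$; and the ``if'' direction by first conjugating the $\g_{10}$-components and then applying the conjugacy of Cartan subspaces inside the $\vecs$-stable reductive subalgebra $\z_\g(\ah_{10})$. The ``technical obstacle'' you flag is handled exactly as you suggest (maximality of $\ce_{1\star}$, equivalently the criterion \eqref{char-prop}, forces $\ah_{11}$ to be a \CSS\ of $\z_\g(\ah_{10})_{11}$), so no gap remains.
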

\begin{proof}
1) Being a toral subalgebra of $\g_{1j}$, $\ah_{1j}$ is contained in a little \CSS\ in $\g_{1j}$.

2)  ``if'' part:  Let $\ce_{1\star}$ and $\tilde\ce_{1\star}=\tilde\ah_{10}\oplus\tilde\ah_{11}$ be two
homogeneous \CSS. By part 1), $\dim\ah_{01}=\dim\tilde\ah_{01}=\dim\ce_{10}$. Therefore,
both $\ah_{01}$ and $\tilde\ah_{01}$ are little \CSS,
they are $G_{00}$-conjugate, and
we may assume 
that $\ah_{01}=\tilde\ah_{01}$. Consider then the $\vecs$-stable reductive algebra
$\z_\g(\ah_{10})$. As $\ah_{10}$ is a central toral subalgebra,  
$\z_\g(\ah_{10})=\ah_{10}\oplus\es$, where $\es$ is reductive and $\vecs$-stable.
By construction, $\es_{10}=\{0\}$ and 
$\ah_{11}, \tilde\ah_{11}\subset \es_{11}$. Moreover, these are little \CSS\ in $\es_{11}$
(otherwise,  $\ce_{1\star}$ or $\tilde\ce_{1\star}$ wouldn't be maximal). Therefore, 
$\ah_{01}$ and $\tilde\ah_{01}$ are $S_{00}$-conjugate, which implies that 
$\ce_{1\star}$ or $\tilde\ce_{1\star}$ are $G_{00}$-conjugate.

``only if'' part: Assuming that $\dim\ce_{1\star} < \dim\ce_{10}+\dim\ce_{11}$, we construct 
two homogeneous \CSS\ with different dimension vectors.
First, let us take a little \CSS\ $\ce_{10}$ and choose a little \CSS\ in $\z_\g(\ce_{10})_{11}$,
say $\tilde\ah_{11}$. This yields a homogeneous \CSS\ with dimension vector
$(\dim\ce_{10}, \dim\ce_{1\star}-\dim\ce_{10})$. On the other hand, one can start with a little
\CSS\ $\ce_{11}$, etc., which yields a homogeneous \CSS\ with dimension vector
$(\dim\ce_{1\star}-\dim\ce_{11}, \dim\ce_{11} )$.
\end{proof}

Note that $\dim\ce_{ij}>0$ whenever $\g_{ij}\ne \{0\}$. Therefore,  a coincidence of
\CSS\ of the form $\ce_{11}=\ce_{1\star}$ or $\ce_{10}=\ce_{1\star}$ certainly excludes the possibility to have a unique standard component of $\fe$. For our test 
commutator~\eqref{eq:comm-map}, one may envisage several samples of good behaviour (not necessarily altogether):

(1)  All irreducible components of $\fe$ are standard (possibly of different dimension);

(2) $\vp$ is surjective and equidimensional; hence, flat;

(3)  $\fe$ has a unique standard component, but also may be some other components.
\\[.6ex]
Property~(3) always holds in the setting of Example~\ref{ex:g=s+s}, with any $\sigma$;
and for $\sigma$ of maximal rank, one gets a rare situation, where all three
properties are satisfied. All quaternionic decompositions of simple Lie algebras can be written out
explicitly, and then the presence of~(3) amounts to a routine verification of the equality in
Theorem~\ref{thm:krit-odno-homog-CSS}(2).

\begin{prop}    \label{lm:dyad-onto}
 Let $\{\sigma_1,\sigma_2\}$ be a dyad. Then $\dim\g_{10}=\dim\g_{01}$ and
 $\vp: \g_{10}\times\g_{11}\to \g_{01}$ is onto. (Therefore, 
 $\dim\vp^{-1}(\xi)\ge \dim\g_{11}$ \ for all $\xi\in \g_{01}$.) Moreover, $\{0\}\times \g_{11}$ is a standard irreducible component of\/ $\fe$ of minimal dimension.
\end{prop}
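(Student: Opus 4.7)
The plan is to base everything on the dimension identity
$\dim\z_\g(y)_{10}=\dim\z_\g(y)_{01}$ for $y\in\g_{11}$, valid in any dyad. Part~(1) is immediate: since $\sigma_1$ and $\sigma_2$ are conjugate, their $(-1)$-eigenspaces have the same dimension, so $\dim\g_{1\star}=\dim\g_{\star 1}$, and cancelling the common summand $\dim\g_{11}$ gives $\dim\g_{10}=\dim\g_{01}$. The $\BZ_2\times\BZ_2$-grading is orthogonal with respect to the Killing form, so the Killing form pairs $\g_{10}$ with itself and $\g_{01}$ with itself non-degenerately. By invariance, for $y\in\g_{11}$ the operator $\ad y:\g_{10}\to\g_{01}$ has kernel $\z_\g(y)_{10}$ while its image has codimension $\dim\z_\g(y)_{01}$ in $\g_{01}$ (essentially the computation in the proof of Lemma~\ref{lm:dominant}). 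Combined with $\dim\g_{10}=\dim\g_{01}$, this yields the identity.

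Next, I would obtain surjectivity via Theorem~\ref{thm:povtor}(2): a little \CSS\ $\ce_{11}\subset\g_{11}$ is a big \CSS\ in $\g_{1\star}$ or in $\g_{\star 1}$. By \eqref{char-prop}, these translate to $\z_\g(\ce_{11})_{10}=0$ or $\z_\g(\ce_{11})_{01}=0$ respectively, and the identity above makes the two equivalent. For $y$ in a dense open subset of $\ce_{11}$ one has $\z_\g(y)=\z_\g(\ce_{11})$, so $\z_\g(y)_{10}=\z_\g(y)_{01}=0$, and $\ad y:\g_{10}\to\g_{01}$ is an isomorphism. Hence $\vp(\g_{10}\times\{y\})=\g_{01}$, and $\vp$ is surjective. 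The lower bound $\dim\vp^{-1}(\xi)\ge\dim\g_{11}$ then follows from the dimension theorem, as $\dim(\g_{10}\times\g_{11})-\dim\g_{01}=\dim\g_{11}$.

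For the final claim, I would observe that $\ce_{11}$, viewed as $\{0\}\oplus\ce_{11}\subset\g_{1\star}$, is a homogeneous \CSS\ with dimension vector $(0,\dim\ce_{11})$, since by the previous paragraph it is already a \CSS\ in $\g_{1\star}$. Theorem~\ref{thm:irr-comp-Fe} then yields the standard component $\ov{G_{00}{\cdot}\ce_{11}}\subset\fe$. Because $G_{00}$ preserves the bigrading, $G_{00}{\cdot}\ce_{11}\subset\{0\}\times\g_{11}$, and density of $G_{00}{\cdot}\ce_{11}$ in $\g_{11}$ forces $\ov{G_{00}{\cdot}\ce_{11}}=\{0\}\times\g_{11}$, of dimension $\dim\g_{11}$; this is the minimal value by the fiber inequality above. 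The main point requiring a little thought is the dichotomy in Theorem~\ref{thm:povtor}(2); the Killing-form symmetry $\dim\z_\g(y)_{10}=\dim\z_\g(y)_{01}$ is precisely what collapses the two alternatives into a single convenient statement, and it is also what underlies the coincidence $\ce_{11}=\ce_{1\star}$ that powers the last step.
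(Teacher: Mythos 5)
Your proof is correct and follows essentially the same route as the paper: the equality $\dim\g_{10}=\dim\g_{01}$ from conjugacy of $\sigma_1,\sigma_2$, Theorem~\ref{thm:povtor}(2) to make $\ce_{11}$ a big (hence homogeneous) \CSS, and invertibility of $\ad y:\g_{10}\to\g_{01}$ for generic $y\in\ce_{11}$, plus Theorem~\ref{thm:irr-comp-Fe} for the last claim. Your Killing-form identity $\dim\z_\g(y)_{10}=\dim\z_\g(y)_{01}$, which collapses the ``\,$\g_{1\star}$ or $\g_{\star 1}$'' dichotomy of Theorem~\ref{thm:povtor}(2) into the single alternative actually needed, is a worthwhile explicit addition: the paper's proof quotes only the $\g_{1\star}$ alternative without comment.
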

\begin{proof}
Since $\dim\g^{\sigma_1}=\dim\g^{\sigma_2}$, we have $\dim\g_{10}=\dim\g_{01}$.
By Theorem~\ref{thm:povtor}(2), 
any little \CSS\ $\ce_{11}\subset \g_{11}$ is also a big \CSS\ in 
$\g_{1\star}$. Therefore, $\ce_{11}$ is a homogeneous \CSS\ and $\ov{G_{00}{\cdot}\ce_{11}}=
\g_{11}$ is an irreducible component of $\fe$. Furthermore, if $x\in \ce_{11}$ is generic, then 
$\z_\g(x)\cap\g_{1\star}=\ce_{11}$, i.e., $\z_\g(x)\cap\g_{10}=\{0\}$. 
Therefore, $\dim [\g_{10},x]=\dim\g_{10}$, i.e., $[\g_{10},x]=\g_{01}$.
\end{proof}

\section{Dyads of maximal rank and commuting varieties}  
\label{sect3}

\noindent
Let $\{\sigma_1,\sigma_2\}$ be a dyad of maximal rank, i.e., both $\sigma_1,\sigma_2$ are of maximal rank. Recall that this implies that
$\sigma_3=\sigma_1\sigma_2$ is inner,  
$\dim\g_{01}=\dim \g_{10}$, and, by Prop.~\ref{prop:povtor}, $\g^{\sigma_3}=\g_{00}\oplus \g_{11}$ is a $\BZ_2$-grading of maximal rank. In particular, $\g_{11}$ contains a \CSA\ of $\g$ 
and any  \CSS\ in $\g_{1\star}$ or $\g_{\star 1}$ is a \CSA.
The main result of this section is

\begin{thm}   \label{thm:main-EQ}
Let  $\{\sigma_1,\sigma_2\}$ be a dyad of maximal rank. Then 
\begin{itemize}
\item[\sf (i)] \  
the commutator mapping 
    $ \vp:\g_{10}\times \g_{11}\to \g_{01}$
is surjective and equidimensional;  
\item[\sf (ii)] \   each irreducible component of $\fe=\vp^{-1}(0)$ is standard,
i.e., is the closure of the $G_{00}$-saturation of a homogeneous \CSS\ in $\g_{1\star}$;
\item[\sf (iii)] \   the ideal of $\fe$ is generated by quadrics $\vp^{\#}(\g_{01}^*)$, where 
$\vp^{\#}: \bbk[\g_{01}]\to \bbk[\g_{10}]\otimes\bbk[\g_{11}]$ is the comorphism. (That is, the scheme $\vp^{-1}(0)$ is a reduced complete intersection).
\end{itemize}
\end{thm}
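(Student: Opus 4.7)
The plan is to reduce equidimensionality (part (i)) to the single estimate $\dim\fe\le \dim\g_{11}$, and to read off (ii) and (iii) from the proof. Surjectivity and the lower bound $\dim\vp^{-1}(\xi)\ge \dim\g_{11}$ for all $\xi$ are already in Proposition~\ref{lm:dyad-onto}. The expected fibre dimension is $\dim\g_{10}+\dim\g_{11}-\dim\g_{01}=\dim\g_{11}$, since the dyad condition gives $\dim\g_{10}=\dim\g_{01}$. Because $\vp$ is bilinear, it is $\bbk^\times$-equivariant for the scalar action $t{\cdot}(x,y)=(tx,ty)$ on the source and $t{\cdot}\xi=t^2\xi$ on the target; hence $\vp^{-1}(t^2\xi)\simeq \vp^{-1}(\xi)$ for $t\neq 0$, and upper-semicontinuity of fibre dimension gives $\dim\vp^{-1}(\xi)\le \dim\vp^{-1}(0)=\dim\fe$ for every $\xi$. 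So (i) reduces to $\dim\fe\le \dim\g_{11}$.

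For this upper bound I combine the strange inequality with a density argument for semisimple pairs. For $x\in\g_{10}\subset \g^{\sigma_2}$, Theorem~\ref{thm:strange-ineq} applied to $\sigma_2$ gives $\dim\g^x_{\star 0}+\rk\g\ge \dim\g^x_{\star 1}$, and Lemma~\ref{prop.5} applied to $\sigma_1$ at $x\in \g_1^{(\sigma_1)}$ gives $\dim\g^x_{1\star}-\dim\g^x_{0\star}=\rk\g$. Together these yield $\dim\g^x_{10}\ge \dim\g^x_{01}$, which rearranges (using $\dim\g_{11}-\dim\g_{00}=\rk\g$) to
\[
  \dim G_{00}{\cdot}x+\dim\g^x_{11}\le \dim\g_{11}
\]
for every $x\in \g_{10}$, bounding the dimension of $\pi_1^{-1}(G_{00}{\cdot}x)$ for the first projection $\pi_1:\fe\to \g_{10}$. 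To globalize, I claim every irreducible component $C$ of $\fe$ contains a semisimple pair in a homogeneous \CSS: for $(x,y)\in C$, the Jordan parts $x_s,x_n,y_s,y_n$ pairwise commute, and on the $\vecs$-stable reductive subalgebra $\el=\z_\g(x_s+y_s)$ the commuting nilpotent pair $(x_n,y_n)\in\el_{10}\times \el_{11}$ admits a cocharacter $\lambda:\bbk^\times\to G_{00}\cap Z_G(x_s+y_s)$ acting with positive weights on both; its $t\to 0$ limit gives $(x_s,y_s)\in\ov{G_{00}{\cdot}(x,y)}\subseteq C$. The pair $(x_s,y_s)$ extends to a homogeneous \CSS\ $\ce_{1\star}$, which is a \CSA\ of $\g$ since $\sigma_1$ has maximal rank, and at a generic $(x_s,y_s)\in \ce_{1\star}$ one has $\z_\g(x_s+y_s)=\ce_{1\star}\subseteq \g_{1\star}$, so $\g^{x_s}_{01}\cap\g^{y_s}_{01}=0$ and the Zariski tangent space of $\fe$ at $(x_s,y_s)$ has dimension $\dim\g_{11}$ (cf.\ the tangent computation in the proof of Theorem~\ref{thm:irr-comp-Fe}). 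Hence $\dim C\le \dim\g_{11}$, and $C$ coincides with the standard component $\ov{G_{00}{\cdot}\ce_{1\star}}$ of the same dimension, proving (i) and (ii) simultaneously.

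For (iii), equidimensionality of $\vp$ between smooth irreducible varieties with all fibres of the expected dimension $\dim X-\dim Y$ forces flatness (miracle flatness: source Cohen--Macaulay, target regular). Flatness then implies that the $\dim\g_{01}$ pullbacks $\vp^\#(\g_{01}^*)$ form a regular sequence cutting out $\vp^{-1}(0)$ scheme-theoretically as a Cohen--Macaulay complete intersection, and reducedness follows from generic reducedness at the smooth tangent points identified above together with the Cohen--Macaulay property. The main obstacle in the argument is the upper bound $\dim\fe\le \dim\g_{11}$: while the local orbit-wise estimate from the strange inequality is clean, its globalization via semisimple limits depends on the existence of associated cocharacters in $G_{00}$ acting with positive weights on commuting nilpotent pairs in $\vecs$-graded reductive Lie algebras, and the strange inequality itself is the delicate case-by-case Theorem~\ref{thm:strange-ineq}.
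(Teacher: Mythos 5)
Your reduction of (i) to the single bound $\dim\fe\le\dim\g_{11}$ and your derivation of (iii) from (i)--(ii) are fine, but the argument for that bound (equivalently, for (ii)) has two genuine gaps, both located exactly where the real difficulty lives. First, the cocharacter $\lambda:\bbk^\times\to G_{00}\cap Z_G(x_s+y_s)$ contracting the commuting nilpotent pair $(x_n,y_n)$ to $(0,0)$ is not known to exist. For a single homogeneous nilpotent, Kostant--Rallis theory supplies an adapted $\tri$-triple whose semisimple element lies in the fixed subalgebra; but for a pair $x_n\in\g_{10}$, $y_n\in\g_{11}$ you need one cocharacter, fixed by both $\sigma_1$ and $\sigma_2$, that is simultaneously positive on both elements, and nothing guarantees this. (If it always existed, the theorem would follow with none of the case-by-case work of Theorem~\ref{thm:strange-ineq}, which should make you suspicious.) Second, even granting that every component $C$ contains the semisimple part $(x_s,y_s)$ of each of its points, your tangent-space estimate is carried out at a \emph{generic} point of the homogeneous \CSS\ containing $(x_s,y_s)$; the semisimple pair you actually produced need not be generic there --- it could even be $(0,0)$, where $\textsl{d}\vp_{(0,0)}=0$ and the Zariski tangent space is everything --- and a generic point of that \CSS\ lies in the standard component, not necessarily in $C$. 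So no bound on $\dim C$ results. (Your earlier orbit-wise estimate $\dim G_{00}{\cdot}x+\dim\g^x_{11}\le\dim\g_{11}$ is correct but cannot be summed over the infinitely many $G_{00}$-orbits in $\g_{10}$, which is presumably why you set it aside.)

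The paper closes these gaps by induction on $\rk\g$. If $y=y_s+y_n$ with $y_s\ne 0$, one passes to the $\vecs$-stable reductive subalgebra $\z_\g(y_s)$: maximal rank of $\sigma_1$ and $\sigma_2$ forces its centre into $\g_{11}$, so $x$ lies in the semisimple part $\es=[\z_\g(y_s),\z_\g(y_s)]$, which has smaller rank, and the induction hypothesis places $(x,y)$ in a standard component. The residual case is a component $\ck$ whose projection to $\g_{11}$ lands in $\N_{11}$; since there are only \emph{finitely many} nilpotent $G_{00}$-orbits in $\g_{11}$, the orbit-wise count does globalize there, giving $\dim\ck\le\dim G_{00}{\cdot}y+\dim\z_\g(y)_{10}$ for a single orbit $G_{00}{\cdot}y$. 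For $y\ne 0$ this, combined with Lemma~\ref{prop.5} and \eqref{ravenstvo-max-rank}, would force the reverse of inequality \eqref{eq:strange-ineq} for $\sigma_3$, contradicting Theorem~\ref{thm:strange-ineq}; for $y=0$ one recovers the standard component $\g_{10}\times\{0\}$. You should either restructure your argument along these lines or supply a proof of your cocharacter claim, which I do not believe is available.
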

\begin{proof}
If $\q$ is a $\vecs$-stable reductive subalgebra of $\g$, then $\fe_\q$ stands for 
the zero-fibre of the commutator $\q_{10}\times\q_{11}\to \q_{01}$. Clearly, 
$\fe_q\subset \fe=\fe_\g$.
Since $\sigma_1$ and $\sigma_2$ are of maximal rank, the centre of $\g$, $\z(\g)$, 
is contained in $\g_{11}$. Consequently, $\fe_\g\simeq \fe_{[\g,\g]}\times \z(\g)$ and without loss
of generality, we may assume that $\g$ is semisimple.

By Proposition~\ref{lm:dyad-onto}, $\vp$ is onto and  $\dim\fe\ge\dim\g_{11}$. In this situation, $\vp$ is equidimensional if and only if $\dim\fe=\dim\g_{11}$.
If $\ce_{1\star}$ is a
homogeneous \CSS, then it is necessarily a \CSA\ of $\g$. That is, $\z_\g(\ce_{1\star})_{01}=0$
for \un{all} homogeneous \CSS\ and $\dim \ov{G_{00}{\cdot}\ce_{1\star}}=\dim \g_{11}$. 
Hence all the standard components of $\fe$ have the same (expected) dimension, and for 
(i) and (ii) it suffices to prove that there is no other irreducible components.

To this end, we argue by induction on $\rk\,\g=\dim\ce_{11}$.

{\bf --} \ If $\dim\ce_{11}=1$, then $\g=\tri$ and the assertion is true.

{\bf --} \ Suppose that $\rk\,\g>1$ and the assertion holds for all dyads of maximal rank for 
semisimple algebras of rank smaller than $\rk\,\g$. 

\textsf{1)} \  Take $(x,y)\in\fe$, $y\in\g_{11}$, and let $y=y_s+y_n$ be the Jordan decomposition. 
Then $[x,y_s]=0$. If $y_s\ne 0$, then
$y_n\in \es:= [\z_\g(y_s),\z_\g(y_s)]$ and $\rk\,\es<\rk\,\g$. By Lemma~\ref{lm:inherit},
$\sigma_i\vert_\es$, $i=1,2$, are again involutions of maximal rank. Let $\z$ denote the centre of
$\z_\g(y_s)$, so that $\z_\g(y_s)=\z\oplus\es$ and $y_s\in\z$. Since both $\sigma_1$ and $\sigma_2$ are 
of maximal rank, $\z\subset \g_{11}$ and hence $x\in \es$.
By the induction assumption, $(x,y_n)\in \es_{10}\oplus\es_{11}$ lies in a standard irreducible
component of $\fe_\es$. Obviously, adding a central summand does not affect this property, 
hence $(x,y)$ lies in a standard component of $\fe_{\z_\g(y_s)}$. As $\rk\,\z_\g(y_s)=\rk\,\g$, this also means that $(x,y)$ lies in a standard component of $\fe$.

\textsf{2)} \  Hence it suffices to consider the case in which $y=y_n$. Write $\N_{11}$ for the 
closed set of all nilpotent elements in $\g_{11}$.
Let $\ck$ be an irreducible component of $\fe$, hence $\dim \ck\ge\dim\g_{11}$. Then 
$\ck_1:=\ck\cap (\g_{10}\times\N_{11})$ is a closed subvariety of $\ck$. If $\ck_1\ne \ck$, then, 
by  part 1), all the points in $\ck\setminus \ck_1$ belong to standard irreducible components. 
Consequently, $\ck$ must be one of the standard components.

\textsf{3)} \  The next possibility is that $\ck=\ck_1$.
Let $p:\g_{10}\times\g_{11}\to \g_{11}$ be the projection.
Then $p(\ck)\subset \N_{11}$, and therefore $\ov{p(\ck)}=\ov{G_{00}{\cdot}y}$
is the closure of a nilpotent $G_{00}$-orbit. 

If $y=0$, then $\ck=\g_{10}\times\{0\}$. Let $\ce_{10}$ be a little \CSS.
The fact that $\ov{G_{00}{\cdot}(\ce_{10}\times\{0\})}=\g_{10}\times\{0\}$ is an irreducible component
of $\fe$ implies that $\z_\g(\ce_{10})_{11}=\{0\}$, whence $\ce_{10}$ is also a \CSS\ in $\g_{1\star}$.
That is, $\ce_{10}$ is a \CSA\ of $\g$. 
(Incidentally, this means that the (-1)-eigenspace of $\sigma_3$ contains a \CSA, i.e., $\{\sigma_1,\sigma_2,\sigma_3\}$ is actually a triad.) Anyway, we see that if $y=0$,
then such $\ck$ appears to be a standard component.
 
\textsf{4)} \   Finally, we prove that the case in which $\ck=\ck_1$ and $y\ne 0$ is impossible. 
Assuming the contrary,  we would have
 \begin{multline*}
   \dim\g_{11}\le \dim \ck\le \dim G_{00}{\cdot}y + \dim p^{-1}(y)  \\
   =\dim \g_{00}-\dim\z_\g(y)_{00}+\dim\z_\g(y)_{10} 
    = \dim \g_{11}-\dim\z_\g(y)_{11}+\dim\z_\g(y)_{10} .
 \end{multline*}
The last equality uses Lemma~\ref{prop.5}. %
Hence, the  existence of such a component $\ck$ would imply that 
    $\dim\z_\g(y)_{11}\le \dim\z_\g(y)_{10}$
for some nonzero $y\in\N_{11}\subset \g_{11}$.
One can rewrite the last condition so that it will only depend on the (inner) involution 
$\sigma_3$. Since $\{\sigma_1,\sigma_2\}$ is a dyad, we have 
$\dim\z_\g(y)_{10}=\dim\z_\g(y)_{01}$; and since $\sigma_3$ is inner and
$\g^{\sigma_3}=\g_{00}\oplus \g_{11}$ is a $\BZ_2$-grading of maximal rank, we have 
$\dim\z_\g(y)_{11}=\dim\z_\g(y)_{00}+\rk\,\g^{\sigma_3}=
\dim\z_\g(y)_{00}+\rk\,\g$, cf. \eqref{ravenstvo-max-rank}. Then 
\[
\dim\z_\g(y)_{11}+\dim\z_\g(y)_{00}+\rk\,\g=2\dim\z_\g(y)_{11}\le 2\dim\z_\g(y)_{10}=
\dim\z_\g(y)_{10}+\dim\z_\g(y)_{01}. 
\]
In other words, if the assumption were true, we would have
\beq  \label{eq:revert}
   \dim (\z_\g(y)\cap \g^{\sigma_3})+\rk\,\g\le \dim (\z_\g(y)\cap \g^{(\sigma_3)}_1) .
\eeq
for some nonzero nilpotent $y\in\g_{11}$. (Note that since $\g^{\sigma_3}=\g_{00}\oplus \g_{11}$ is a $\BZ_2$-grading of maximal rank, $\g_{11}$ meets all nilpotent orbits in $\g^{\sigma_3}$
\cite{leva}. Therefore, {\sl a priori}, $y$ can be {any} nonzero nilpotent element of $\g^{\sigma_3}$.)
However,  Theorem~\ref{thm:strange-ineq} below shows that \eqref{eq:revert}
is never satisfied if $y\ne 0$. This completes the proof of parts (i) and (ii).

For (iii), it suffices to prove that each irreducible component of $\fe$ contains a point 
$(x,y)$ such that $\textsl{d}\vp_{(x,y)}$ is onto, i.e.,
$\Ima\textsl{d}\vp_{(x,y)}=\g_{01}$, cf.~\cite[Lemma\,2.3]{ri81}. Since each irreducible component 
of $\fe$ is the closure of the $G_{00}$-saturation of a homogeneous \CSA\ , it contains a point 
$(x,y)$ such that $\z_\g(x)_{01}\cap\z_\g(y)_{01}=\{0\}$ and then $\textsl{d}\vp_{(x,y)}$ is onto, as 
shown in the proof of Lemma~\ref{lm:dominant}.
\end{proof}

\begin{rmk}
1) For any inner $\sigma\in\mathsf{Inv}(\g)$, there exist commuting involutions of maximal rank $\sigma_1$ and $\sigma_2$ such that $\sigma=\sigma_1\sigma_2$, see Prop.~\ref{prop:povtor}.
Therefore, there are sufficiently many quaternionic decompositions, where 
Theorem~\ref{thm:main-EQ} applies.

2) For an arbitrary dyad $\{\sigma_1,\sigma_2\}$, it can happen that all irreducible components of 
$\fe$ are standard, but they have different dimensions. That is, $\vp:\g_{10}\times\g_{11}\to 
\g_{01}$ is not equidimensional, but still any pair of commuting elements in 
$\g_{10}\times\g_{11}$ can be approximated by a pair of commuting {\sl semisimple\/} elements. 
\end{rmk}

\begin{ex}   \label{ex:son-so(n-1)}
 Let $\sigma_1$ be an involution of $\g=\son$ such that $\g^{\sigma_1}=\mathfrak{so}_{n-1}$.
This can be included in a dyad $\{\sigma_1,\sigma_2\}$ such that $\g^{\sigma_3}=\fso_{n-2}\times
\fso_2$. The quaternionic decomposition is \\
\begin{center}
\setlength{\unitlength}{0.02in}
\begin{picture}(35,24)(0,0)
\put(-20,7){$\g=$}
    \put(-2,15){$\fso_{n-2}$}    \put(30,15){$\mathsf R(\varpi_1)$}
    \put(-3,0){$\mathsf R(\varpi_1)$}      \put(31,0){$\mathsf R(0)$}
\qbezier[30](-2,9),(23,9),(48,9)            
\qbezier[20](23,-3),(23,11),(23,24)      
\put(19.9,7){$\oplus$}   
\put(21,-9){{\color{my_color}$\sigma_2$}}
\put(53,7){{\color{my_color}$\sigma_1$}\ ,}
\end{picture}
\end{center}
\vskip2.5ex
where the trivial $\fso_{n-2}$-module $\mathsf R(0)$ is just the central torus $\fso_{2}$ in 
$\g^{\sigma_3}$. Here $\dim\ce_{10}=\dim\ce_{11}=1$ and
the zero-fibre of multiplication $\g_{10}\times\g_{11}\to\g_{01}$ consists of two irreducible components,
$\g_{10}\times\{0\}\simeq \bbk^{n-2}$ and $\{0\}\times \g_{11}\simeq\bbk$. Both components are standard.
\end{ex}

The following auxiliary result does not refer to quaternionic decompositions; it concerns the case of a sole
involution. 

\begin{thm}    \label{thm:strange-ineq}
Let $\sigma$ be an arbitrary involution of\/ $\g$ and $\g=\g_0\oplus\g_1$ the corresponding
$\BZ_2$-grading. For any nonzero $x\in\g_0$, we have
\beq    \label{eq:strange-ineq}
   \dim\g_0^x+\rk\,\g-\dim\g_1^x> 0 .
\eeq
More precisely, one always has $\dim\g_0^x+\rk\,\g-\dim\g_1^x \ge 0$ and the equality only 
occurs if $x=0$ and $\sigma$ is of maximal rank.
\end{thm}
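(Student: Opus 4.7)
The ``$\ge 0$'' statement at $x=0$ is the standard inequality $\dim\g_1-\dim\g_0\le\rk\,\g$ with equality iff $\sigma$ is of maximal rank (recalled in Section~\ref{sect1}), so the task is to prove strict inequality for each nonzero $x\in\g_0$. My plan is a two-stage reduction to a nonzero nilpotent $x$ inside a semisimple $\g$, followed by an $\tri$-triple computation; the last step appears to require a case-by-case verification, matching the author's remark that no uniform argument is known.

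\noindent\textbf{Reduction via Jordan decomposition.} Let $x=x_s+x_n$; both summands lie in $\g_0$ because $\sigma$ preserves Jordan parts. Set $\mathfrak h=\z_\g(x_s)$, a $\sigma$-stable reductive subalgebra with $\rk\,\mathfrak h=\rk\,\g$ and $\g_i^x=\mathfrak h_i^{x_n}$. Splitting $\mathfrak h=\mathfrak h'\oplus\z(\mathfrak h)$ with both summands $\sigma$-stable, one computes
\[
\dim\mathfrak h_0^{x_n}+\rk\,\mathfrak h-\dim\mathfrak h_1^{x_n}
=\bigl(\dim\mathfrak h'{}_0^{x_n}+\rk\,\mathfrak h'-\dim\mathfrak h'{}_1^{x_n}\bigr)+2\dim\z(\mathfrak h)_0.
\]
When $x_s\ne 0$ one has $x_s\in\z(\mathfrak h)_0$, so $\dim\z(\mathfrak h)_0\ge 1$ and the second summand contributes $\ge 2$; together with the ``$\ge 0$'' claim for $(\mathfrak h',x_n)$---which falls into the $x=0$ fact when $x_n=0$ and into the nilpotent case below when $x_n\ne 0$---this already gives strict inequality. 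So I may assume $\g$ is semisimple and $x=e$ is a nonzero nilpotent of $\g_0$.

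\noindent\textbf{Nilpotent case via $\tri$-triples.} Apply Jacobson--Morozov inside $\g_0$ to embed $e$ in a triple $(e,h,f)\subset\g_0$. Since $h\in\g_0$, the $\ad h$-grading $\g=\bigoplus_k\g(k)$ is $\sigma$-stable piece by piece, and the resulting $\tri$-action on $\g$ commutes with $\sigma$. Apply the elementary identity $\dim V^e=\dim V(0)+\dim V(1)$, valid for every finite-dimensional $\tri$-module $V$, to $V=\g_0$ and $V=\g_1$ separately: this gives $\dim\g_i^e=\dim\g_i(0)+\dim\g_i(1)$ for $i=0,1$. The subalgebra $\g(0)=\z_\g(h)$ is $\sigma$-stable, reductive, and of rank $\rk\,\g$, so the $x=0$ inequality applied to $\sigma|_{\g(0)}$ yields $\dim\g_1(0)-\dim\g_0(0)\le\rk\,\g$. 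Substituting,
\[
\dim\g_0^e+\rk\,\g-\dim\g_1^e
=\bigl[\rk\,\g-(\dim\g_1(0)-\dim\g_0(0))\bigr]+\bigl[\dim\g_0(1)-\dim\g_1(1)\bigr],
\]
with the first bracket automatically $\ge 0$.

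\noindent\textbf{The main obstacle.} To finish, one has to exclude the simultaneous vanishing of the first bracket and negativity of the second. The Killing form of $\g$ furnishes a symplectic pairing $\omega(v,w)=B(f,[v,w])$ on $\g(1)$, on which $\sigma$ acts as an isometry; hence $\g_0(1)$ and $\g_1(1)$ are even-dimensional symplectic subspaces. This does not however bound their relative sizes. I see no uniform way past this point and would fall back on a case-by-case verification: for each $(\g,\sigma)$ with $\g$ simple, run through the classification of nontrivial nilpotent $G_0$-orbits in $\g_0$, read off the $(h,\sigma)$-bigraded dimensions of $\g(0)$ and $\g(1)$ from the weighted Dynkin data, and confirm the strict inequality. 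This is precisely the lengthy verification relegated to Appendix~\ref{app:A}; an {\sl a priori} conceptual argument remains the desideratum the author advertises.
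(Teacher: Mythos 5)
Your reductions to the case of a nonzero nilpotent $e$ in a semisimple $\g$ are correct and coincide with steps 1$^o$--3$^o$ of the paper's proof, and the identity $\dim\g_i^e=\dim\g_i(0)+\dim\g_i(1)$ (for the $\sigma$-stable $\ad h$-grading attached to an $\tri$-triple chosen inside $\g_0$) is a valid and clean reformulation. But from that point on the proposal proves nothing: the entire nilpotent case --- which is where the content of the theorem lies --- is deferred to a case-by-case verification that you do not perform. Note also that what must be excluded is not merely ``the first bracket vanishes and the second is negative'': even when the first bracket is strictly positive, the term $\dim\g_0(1)-\dim\g_1(1)$ could in principle be negative enough to make the sum nonpositive, so the residual problem is essentially the original inequality restated in graded terms, not a smaller one.

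Moreover, your framework reduces less than the paper does before resorting to cases. The paper first disposes of all nilpotents that are even in $\g$ (reducing to the semisimple element $h$, i.e.\ back to step 2$^o$), and then of all nilpotents that are even in $\g_0$ but not in $\g$: there one shows, for $\g$ simple, that $\g_1=\g_1^{odd}$, hence $\dim\g_0^e=\dim\g(0)$ and $\dim\g_1^e=\dim\g(1)$, and Vinberg's lemma (the group $G(0)$ has finitely many orbits in $\g(1)$) forces $\dim\g(1)\le\dim\g(0)$, giving the stronger inequality $\dim\g_0^e\ge\dim\g_1^e$ outright. Only non-even nilpotent elements of $\g_0$ are then left for the partition formulas of Appendix~\ref{app:A} and the Dynkin-table computations. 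In your setup the list to be checked would comprise \emph{all} nonzero nilpotent $G_0$-orbits; and even the even-in-$\g$ case needs the extra observation (which you omit) that $h$ is a nonzero central element of $\g(0)$ lying in $\g_0$, so $\sigma|_{\g(0)}$ cannot be of maximal rank and the first bracket is strictly positive. To count as a proof, the proposal would at minimum have to carry out the verification it promises; as written it is an honest, correctly organised, but incomplete reduction.
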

\begin{rmk}
For application to Theorem~\ref{thm:main-EQ}, we only need the case when $x$ is nilpotent and
$\sigma$ is inner. But, surprisingly, the assertion appears to be absolutely general. Unfortunately, our proof is not conceptual, after all. Having successfully reduced the problem to non-even nilpotent elements of
$\g_0$, we then resort to case-by-case considerations. Certainly, there must be a better proof!
\end{rmk}
\begin{proof}
Note that $\dim G{\cdot}x$ is even and, therefore, the LHS in \eqref{eq:strange-ineq}
is always even; hence the more accurate assertion is that 
$\dim\g_0^x+\rk\,\g-\dim\g_1^x\ge 2$ for all nonzero $x\in \g_0$.

1$^o$. If $x=0$, then we have $\dim\g_0+\rk\,\g-\dim\g_1\ge 0$, and the equality holds if and only if $\sigma$ is of maximal rank.

2$^o$.  If $x$ is nonzero semisimple, then $\g^x$ is a $\sigma$-stable reductive subalgebra
and $x$ is a central element of $\g^x$ that belongs to $\g_0^x$. Write $\g^x=\z\oplus\es$, where $\es=[\g^x,\g^x]$ and $\z$ is the centre. Then $\dim\z_0 > 0$ and 
\[
  \dim\g_0^x+\rk\,\g-\dim\g_1^x=(\dim\es_0+\rk\,\es-\dim\es_1)+2\dim \z_0 \ge 2 .
\]
\indent 3$^o$. If $x$ is non-nilpotent, then using the Jordan decomposition $x=x_s+x_n$,
we reduce the problem to the same property for the nilpotent element $x_n$ in the
$\sigma$-stable reductive subalgebra $\z_\g(x_s)$.

4$^o$. From now on, we assume that $x=e\in\g_0$ is nonzero and  nilpotent.
Choose an $\tri$-triple $\{e,h,f\}\subset \g_0$.
Suppose that $e$ is even in $\g$, i.e., the eigenvalues of $\ad h$ in $\g$ are even.
Then $\dim \g^h=\dim\g^e$ and $\dim \g_0^h=\dim\g_0^e$. Thus, the assertion is reduced 
to the same assertion for $h\in\g_0$ and we are again in the setting of part 2$^o$.

5$^o$. Suppose that $e$ is even in $\g_0$, but not in $\g$. That is, the eigenvalues of $\ad h$ in $\g_0$ are even, but $\ad h$ has also some odd eigenvalues in $\g_1$.
Decomposing $\g$ into the sum of $\sigma$-stable ideals, we may assume that either $\g$ is simple
or $\g=\es\oplus\es$, where $\es$ is simple and $\sigma$ is the permutation involution. In the second case, if $e$ is even in $\g_0=\Delta(\es)$, then $e$ is also even in $\g$.
Therefore, without loss of generality, we may assume that $\g$ is simple.

Let us decompose $\g_1$ according to the parity of $\ad h$-eigenvalues:
$\g_1=\g_1^{odd}\oplus\g_1^{even}$. By assumption, $\g_1^{odd}\ne 0$.
Then $\tilde \g:=[\g_1^{odd},\g_1^{odd}]\oplus \g_1^{odd}$ is
an ideal of $\g$ that does not meet $\g_1^{even}$. 
Therefore,  $\tilde\g=\g$ and $\g_1^{even}=0$.
Hence  $\g_0^e=(\g^e)^{even}$ and $\g_1^e=(\g^e)^{odd}$.
Consider the $\BZ$-grading of $\g$ determined by the eigenvalues of $h$, 
$\g=\bigoplus_{i\in\BZ}\g(i)$. The $\tri$-theory shows that  
$\dim(\g^e)^{even}=\dim\g(0)$ and $\dim(\g^e)^{odd}=\dim\g(1)$.
Hence $\dim\g_0^e=\dim\g(0)$ and $\dim\g_1^e=\dim\g(1)$. Finally, it follows from Vinberg's lemma
\cite[\S\,2.3]{vi76} that the group $G(0)$ has finitely many orbits in $\g(1)$, whence $\dim\g(1)\le \dim\g(0)$.
Thus, in this case the stronger inequality $\dim\g_0^e\ge \dim\g_1^e$ holds.

6$^o$. Thus, it remains to handle the case in which a nilpotent element $e\in\g_0$ is not even.
Here we do not know an a priori argument and  resort to the case-by-case considerations.

7$^o$.  If $\g$ is a classical  Lie algebra, then the nilpotent orbits in $\g$ and $\g_0$ are parameterised by partitions, and we use the explicit formulae for $\dim\g^e$ and $\dim\g_0^e$ 
in terms of partitions. 
Some of these calculations are presented in Appendix~\ref{app:A}.

8$^o$.  If $\g$ is an exceptional simple Lie algebra, then, for any non-even nilpotent 
element $e\in\g_0$, we determine the corresponding nilpotent orbit in $\g$ and then 
compare the dimensions of $\g_0^e$ and $\dim\g^e$. Being rather boring, the verification is, however, not very difficult.

\un{For $\sigma$ inner}, we use
the seminal work of Dynkin \cite[Tables~16--20]{dy52},
where he computed,  for all simple 3-dimensional
subalgebras in exceptional Lie algebras,
 the ``minimal including regular semisimple subalgebras" and the corresponding
 weighted Dynkin diagrams. See also comments on this article in 
\cite[p.\,309--312]{dynkin-Coll}, where a few errors occurring in \cite{dy52} are corrected.

To convey the idea, consider some examples related to an (inner) 
involution of $\g=\GR{E}{8}$ with $\g_0=\GR{D}{8}=\mathfrak{so}_{16}$. There are 33 
non-even nilpotent orbits in $\g_0$. (Recall that $e\in \mathfrak{so}_{16}$ is non-even if and
only if the partition of $e$ contains both odd and even parts.)

a) Let $e\in \mathfrak{so}_{16}$ be a nilpotent element corresponding to the partition $(11,2,2,1)$.
Using \cite[Cor.\,3.8(a)]{hess76} or \cite[Prop.\,2.4]{KP82}, we obtain $\dim\g_0^e=16$.
This partition also shows that a ``minimal including regular semisimple subalgebra" of 
$\GR{D}{8}$ containing $e$ is of type $\GR{D}{6}+\GR{A}{1}$. (Here $(11,1)$ is the partition 
of the regular nilpotent element of $\GR{D}{6}$ and any pair of equal parts $(n,n)$ gives rise 
to the simple summand $\GR{A}{n-1}$.) 
Then using \cite[Table\,20]{dy52}, we detect the simple 3-dimensional
subalgebra in $\GR{E}{8}$ with ``minimal including regular semisimple subalgebra" of 
type $\GR{D}{6}+\GR{A}{1}$.  The corresponding nilpotent orbit has the modern label $\GR{E}{7}(a_3)$ and here 
$\dim\g^e=28$. Hence $\dim\g_1^e=12$ and Eq.~\eqref{eq:strange-ineq} holds.

b) Let $e\in \mathfrak{so}_{16}$  correspond to the partition $(7,5,2,2)$.
By \cite[Cor.\,3.8(a)]{hess76},  $\dim\g_0^e=22$. Here a
``minimal including regular semisimple subalgebra" is of type $\GR{D}{6}(a_2)+\GR{A}{1}$,
because the partition $(7,5)$ determines the distinguished nilpotent orbit in $\GR{D}{6}$, 
which is denoted
by $\GR{D}{6}(a_2)$. Using  \cite[Table\,20]{dy52}, we detect the corresponding 
nilpotent orbit in $\g$. This orbit is denoted nowadays by $\GR{E}{7}(a_5)$ and here 
$\dim\g^e=42$.

c) Let $e\in \mathfrak{so}_{16}$  correspond to the partition $(7,4,4,1)$.
By \cite[Cor.\,3.8(a)]{hess76},  $\dim\g_0^e=22$. Here a
``minimal including regular semisimple subalgebra" is of type $\GR{D}{4}+\GR{A}{3}$. 
Using  \cite[Table\,20]{dy52}, we detect the corresponding 
nilpotent orbit in $\g$. This orbit is denoted nowadays by $\GR{D}{6}(a_2)$ and here 
$\dim\g^e=44$. 

\un{If $\sigma$ is outer}, then $\g$ is of type $\GR{E}{6}$. In the respective two cases, we use
the information on $e\in\g_0$ for decomposing $\g_1$ as $\langle e,h,f\rangle$-module,
which allows to compute $\dim\g_1^e$.
\end{proof}

A case-free proof of Theorem~\ref{thm:strange-ineq} might be derived from the following
conjectural invariant-theoretic property of centralisers. Recall that $\g=\g_0\oplus\g_1$ and 
$e\in\g_0$.
Let $G^e_0$ be the connected subgroup of $G_0$ with Lie algebra $\g_0^e$. Then $G^e_0$ acts
on  $(\g_1^e)^*$ and we write $\bbk((\g_1^e)^*)^{G_0^e}$ for the field of $G_0^e$-invariant 
rational functions on $(\g_1^e)^*$.

\begin{conj}     \label{conj:a-la-alela}
For any $e\in\g_0\cap\N$, we have \ $ \trdeg \bbk((\g_1^e)^*)^{G_0^e}\le \rk\,\g$.
\end{conj}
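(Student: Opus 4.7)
The plan is to combine Rosenlicht's theorem on rational invariants with Kostant--Rallis applied to the reductive part of the centraliser of $e$, following the same reduction pattern used in the proof of Theorem~\ref{thm:strange-ineq}. By Rosenlicht,
\[
    \trdeg \bbk((\g_1^e)^*)^{G_0^e} \;=\; \dim\g_1^e - \max_{\xi\in(\g_1^e)^*}\dim G_0^e{\cdot}\xi,
\]
so the conjecture becomes the assertion that the generic $G_0^e$-orbit in $(\g_1^e)^*$ has codimension at most $\rk\,\g$. Since the maximum orbit dimension is bounded above by $\dim G_0^e=\dim\g_0^e$, this would at once recover Theorem~\ref{thm:strange-ineq} in a case-free fashion. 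Arguing as in $2^o$--$3^o$ of the proof of Theorem~\ref{thm:strange-ineq}, the Jordan decomposition $e=e_s+e_n$ and the $\sigma$-stable splitting $\z_\g(e_s)=\z\oplus\es$ reduce us to the case $e\in\g_0\cap\N$.

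In that case I pick a normal $\tri$-triple $\{e,h,f\}\subset\g_0$ by Jacobson--Morozov in $\g_0$ and let $\g=\bigoplus_i\g(i)$ be the associated $\ad h$-grading, which is automatically $\sigma$-stable. The Killing form identifies $(\g_1^e)^*$ with $\g_1/[e,\g_1]$, which via the $\tri$-complementarity $\g_1=[e,\g_1]\oplus\g_1^f$ is in turn identified with $\g_1^f$. On the centraliser, the Levi decomposition reads $\g^e=\g^{e,h,f}\oplus\n_+$ with $\n_+=\bigoplus_{i>0}(\g^e\cap\g(i))$ the nilradical, both summands $\BZ_2$-graded; in particular $(\g^{e,h,f},\g_0^{e,h,f})$ is a reductive symmetric pair. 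By Kostant--Rallis,
\[
   \trdeg \bbk(\g_1^{e,h,f})^{G_0^{e,h,f}} \;=\; \dim\ce',
\]
where $\ce'\subset\g_1^{e,h,f}$ is a Cartan subspace, and since $\ce'$ is a toral subalgebra of $\g$, one has $\dim\ce'\le \rk\,\g$.

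The decisive third step is to descend this bound from $\g_1^{e,h,f}$ and the reductive Levi $G_0^{e,h,f}$ to the larger space $\g_1^e$ acted on by the full (non-reductive) group $G_0^e$. The cocharacter generated by $h$ contracts $\g_1^e$ onto its zero-weight part $\g_1^{e,h,f}$ through non-negative $\ad h$-weights, and one would hope that combining this $\bbk^\times$-contraction with the action of the unipotent radical of $G_0^e$ along the positive-weight directions yields
\[
  \trdeg\bbk((\g_1^e)^*)^{G_0^e}\;\le\; \trdeg \bbk(\g_1^{e,h,f})^{G_0^{e,h,f}}\;\le\; \rk\,\g,
\]
which closes the conjecture.

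I expect this descent to be the principal obstacle. It is the precise symmetric-pair analogue of the hardest step in the proofs of Elashvili's conjecture $\mathrm{ind}(\g^e)=\rk\,\g$ (Yakimova; Charbonnel--Moreau), where one must exhibit the generic coadjoint orbit of $\g^e$ as a product of a slice in the reductive part and an orbit of the unipotent radical. A plausible route is either to develop a $\BZ_2$-graded variant of the Brundan--Goodwin--Elashvili--Kac good-grading theory, or, for generic $\xi\in\g_1^{e,h,f}$, to construct explicitly a slice in $\g_1^e$ of codimension $\dim\ce'$ transverse to $G_0^e{\cdot}\xi$; either approach would yield the bound without the case-by-case enumeration invoked in the proof of Theorem~\ref{thm:strange-ineq}.
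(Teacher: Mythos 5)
This statement is labelled as a \emph{conjecture} in the paper: the author gives no proof of it, only the remark (via Rosenlicht's theorem) that it would imply Theorem~\ref{thm:strange-ineq}, whose actual proof proceeds by reduction to non-even nilpotent elements followed by case-by-case verification. So there is no argument in the paper to match, and your proposal has to stand on its own. It does not close the conjecture, and you say as much yourself: the ``decisive third step'' is introduced with ``one would hope'' and is never established.

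Concretely, your first two steps are sound but soft: the Rosenlicht reformulation, the reduction to nilpotent $e$ via the Jordan decomposition, the identification $(\g_1^e)^*\simeq \g_1/[e,\g_1]\simeq \g_1^f$, the $\sigma$-stable Levi decomposition $\g^e=\g^{e,h,f}\oplus\n_+$, and the Kostant--Rallis bound $\trdeg\bbk(\g_1^{e,h,f})^{G_0^{e,h,f}}=\dim\ce'\le\rk\,\g$ (the last inequality because $\ce'$ is a toral subalgebra of $\g$, as $\g^{e,h,f}$ is reductive in $\g$). The entire difficulty of the conjecture is concentrated in the descent from the Levi acting on its zero-weight space to the full non-reductive group $G_0^e$ acting on $(\g_1^e)^*$, and a naive $\bbk^\times$-contraction along $h$ does not deliver it: under such a degeneration orbit dimensions can only drop, so large orbits of $G_0^{e,h,f}$ in $\g_1^{e,h,f}$ do not automatically produce large $G_0^e$-orbits in $(\g_1^e)^*$; one must actually exhibit a generic $\xi$ whose stabiliser meets the unipotent radical in the expected (minimal) dimension. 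This is precisely the step that, in the ungraded analogue (Elashvili's conjecture $\mathrm{ind}(\g^e)=\rk\,\g$), resisted all uniform arguments and was settled only by long classical-type case analyses and computer checks in exceptional types. Your proposal is a reasonable research programme --- and correctly identifies where the work lies --- but as a proof it has a genuine, acknowledged gap at its crucial step, consistent with the statement remaining a conjecture in the paper.
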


\noindent
By Rosenlicht's theorem \cite[Ch.\,I.6]{brion}, 
$\displaystyle
\trdeg \bbk((\g_1^e)^*)^{G_0^e}=\dim \g_1^e -\max_{\xi\in (\g_1^e)^*} \dim G_0^e{\cdot}\xi$. 
If $e\ne 0$, then the one-dimensional unipotent
group $\exp(te)$, $t\in\bbk$, acts  trivially on $\g_1^e$ and hence 
$\max_{\xi\in (\g_1^e)^*} \dim G_0^e{\cdot}\xi \le \dim\g_0^e -1$. Therefore, if the conjecture
were true, we would obtain 
$\dim\g_1^e-\dim\g_0^e+1\le \rk\,\g$, as required.
Perhaps, this can be related to the Elashvili conjecture, which asserts that 
$\trdeg\bbk((\g^e)^*)^{G^e}=\rk\,\g$ for all $e\in \N$. 

\begin{rmk}
Inequality \eqref{eq:strange-ineq} can be written as $\dim\g^x_0 > \dim\mathcal B_x$, where
$\mathcal B_x$ is the variety of  Borel subalgebras of $\g$ containing $x$ (the Springer fibre of $x$).
[Recall that $\dim\mathcal B_x=(\dim\g^x-\rk\,\g)/2$.]
\end{rmk}

\section{Commuting varieties and  restricted root systems}  
\label{sect4}

\noindent
Here we assume that $\{\sigma_1,\sigma_2\}$ is a dyad. 
As above, we consider the commutator map $\vp:\g_{10}\times\g_{11}\to \g_{01}$ and
the $\vecs$-commuting variety $\fe=\vp^{-1}(0)$. Then $\dim\fe\ge \dim \g_{11}$ and $\fe$  
have a standard irreducible component 
of expected dimension $\dim\g_{11}$; namely, $\{0\}\times \g_{11}$, see Proposition~\ref{lm:dyad-onto}. 

In this section, we describe a method for detecting subvarieties of $\fe$ of
large dimension. This method is based on comparing restricted root systems for little and big symmetric spaces related to the quaternionic decomposition in question.

Take a little \CSS\ $\ce_{11}\subset\g_{11} $. Then, by Theorem~\ref{thm:povtor}(2), 
$\ce_{11}$ is also a \CSS\ in $\g_{1\star}$ and $\g_{\star 1}$, which is equivalent to that 
$\z_\g(\ce_{11})_{10}=\z_\g(\ce_{11})_{01}=\{0\}$ and $\z_\g(\ce_{11})_{11}=\ce_{11}$.
Our idea is to replace $\ce_{11}$ with a proper subspace $\tilde\ce$ such that:
\vskip1.5ex
\hbox to \textwidth{ \refstepcounter{equation}(\theequation) \hfil 
\label{eq:cond-larger}
$\tilde\ce$ still contains $G_{00}$-regular elements.
\hfil}
\vskip1.5ex
\noindent
Then we consider $\hat\ce:=\z_\g(\tilde\ce)_{10}\times\tilde\ce \subset \fe$ 
and compute the dimension of 
$G_{00}{\cdot} \hat\ce$. 
Since $\ov{G_{00}{\cdot}\ce_{11}}=\g_{11}$, we have 
\[
    \dim G_{00}+\dim\ce_{11}-\dim\z_\g(\ce_{11})_{00}=\dim\g_{11} .
\]
Set $\mathfrak T_{00}(\hat \ce)=\{g\in G_{00} \mid g{\cdot}y\in \hat\ce \ \text{ for generic } \ 
y\in\hat\ce\}$, and likewise for $\ce_{11}$. In view of 
\eqref{eq:cond-larger}, we have 
$\dim\mathfrak T_{00}(\hat \ce)=\dim\mathfrak T_{00}(\ce_{11})=\dim\z_\g(\ce_{11})_{00}$.
Then
\begin{multline}   \label{eq:larger-comp-raznesenie}
  \dim G_{00}{\cdot} \hat\ce=\dim G_{00}+\dim\hat\ce
  -\dim \mathfrak T_{00}(\hat \ce) \\  
  =\bigl(\dim G_{00}+\dim\ce_{11}-\dim\z_\g(\ce_{11})_{00}\bigr) + \bigl(\dim\z_\g(\tilde\ce)_{10}-
  \dim\ce_{11}+\dim\tilde\ce\bigr)\\  
  =\dim\g_{11} + \bigl(\dim\hat\ce - \dim\ce_{11}\bigr)
\end{multline}

Thus, we obtain a subvariety of larger dimension, if $\dim\z_\g(\tilde\ce)_{10} +\dim\tilde\ce > \dim\ce_{11}$.
Of course, it is not always possible to construct such a $\tilde\ce$. Our sufficient condition 
exploits restricted  root systems. Set $\h=\g^{\sigma_3}$, and let $H$ denote the corresponding 
connected (reductive) subgroup of $G$.
Write $\bar\sigma$ for the restriction to $H$ of $\sigma_1$ or $\sigma_2$.

Let $C_{11}=\exp(\ce_{11})\subset H\subset G$ be the corresponding torus. The coincidence 
of \CSS\ means that $C_{11}$ is a maximal $\sigma_1$-anisotropic torus in $G$ and a maximal 
$\bar\sigma$-anisotropic torus in $H$.
Accordingly, one obtains the inclusion of two restricted root systems relative to $C_{11}$:
\[
  \Psi (H/G_{00})\subset \Psi(G/G_{0\star}) .
\]
Identifying restricted roots and their differentials, one may consider restricted roots as linear 
forms on $\ce_{11}$. Then the set of $G_{00}$-regular elements of $\ce_{11}$ is 
$\{x\in \ce_{11} \mid \mu(x){\ne}0 \ \ \forall \mu\in \Psi(H/G_{00})\}$ and the set of
$G_{0\star}$-regular elements 
is  $\{x\in \ce_{11} \mid \mu(x){\ne} 0 \ \ \forall \mu\in \Psi(G/G_{0\star})\}$.

\begin{prop}                     \label{prop:larger-comp}
Assume that $\mu\in \Psi(G/G_{0\star})$ and $r\mu\not\in \Psi(H/G_{00})$
for any $r\in\mathbb Q$. If $m_\mu >1$, then $\dim\fe\ge \dim\g_{11}+m_\mu-1 > \dim\g_{11}$.
\end{prop}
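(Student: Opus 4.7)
The plan is to apply the general machinery set up earlier in the section: find a proper subspace $\tilde\ce\subsetneq \ce_{11}$ still containing $G_{00}$-regular elements, put $\hat\ce=\z_\g(\tilde\ce)_{10}\times\tilde\ce\subset\fe$, and then bound $\dim G_{00}{\cdot}\hat\ce$ from below via formula~\eqref{eq:larger-comp-raznesenie}. The natural candidate is $\tilde\ce:=\ce_{11}\cap \ker\mu$, a hyperplane in $\ce_{11}$. To verify condition~\eqref{eq:cond-larger}, I would note that a $\mu'\in\Psi(H/G_{00})$ vanishes identically on $\tilde\ce$ only if $\mu'\in\BQ\mu$; the hypothesis precludes this, so the $G_{00}$-regular locus in $\tilde\ce$ is a non-empty Zariski open subset. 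Plugging $\dim\tilde\ce=\dim\ce_{11}-1$ into~\eqref{eq:larger-comp-raznesenie} reduces the proposition to the inequality $\dim \z_\g(\tilde\ce)_{10}\ge m_\mu$.

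For this, I would use the $C_{11}$-weight decomposition~\eqref{eq:restr-root}. Since $\ce_{11}$ is simultaneously a \CSS\ in $\g_{1\star}$ and in $\g_{\star 1}$, property~\eqref{char-prop} gives $\z_\g(C_{11})_{10}=\{0\}$, so
\[
\z_\g(\tilde\ce)_{10}=\bigoplus_{\nu\in\BQ\mu\cap\Psi(G/G_{0\star})}(\g_\nu)_{10},
\]
and it suffices to bound the $\nu=\pm\mu$ contribution. Because $C_{11}\subset\g_{11}$, both $\sigma_1$ and $\sigma_2$ invert $C_{11}$; hence they interchange $\g_\nu$ and $\g_{-\nu}$, while $\sigma_3$ preserves each $\g_\nu$. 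Splitting $\g_\nu=\g_\nu^+\oplus\g_\nu^-$ into $\sigma_3$-eigenspaces, a direct sign calculation on vectors of the form $u\pm\sigma_1(u)$ (with $u\in\g_\nu^\pm$) gives
\[
\dim(\g_\nu\oplus\g_{-\nu})_{00}=\dim(\g_\nu\oplus\g_{-\nu})_{11}=\dim\g_\nu^+,\quad
\dim(\g_\nu\oplus\g_{-\nu})_{10}=\dim(\g_\nu\oplus\g_{-\nu})_{01}=\dim\g_\nu^-.
\]

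The key consequence is that $\g_\nu^+$ sits inside $\g_{11}=\h_1^{(\bar\sigma)}$, and hence contributes $\dim\g_\nu^+$ to the multiplicity of $\nu$ as a restricted root for the little symmetric space $(H,G_{00})$. Since by assumption no rational multiple of $\mu$ belongs to $\Psi(H/G_{00})$, we get $\g_\mu^+=0$, so $\g_\mu=\g_\mu^-$ and $\dim(\g_\mu\oplus\g_{-\mu})_{10}=m_\mu$. This yields $\dim \z_\g(\tilde\ce)_{10}\ge m_\mu$, and then~\eqref{eq:larger-comp-raznesenie} produces a subvariety $\ov{G_{00}{\cdot}\hat\ce}\subset\fe$ of dimension at least $\dim\g_{11}+m_\mu-1$, which is strictly larger than $\dim\g_{11}$ thanks to $m_\mu>1$.

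The main obstacle, and the one genuinely non-mechanical step, is the bookkeeping in the middle paragraph: precisely matching the $\sigma_3$-fixed part of each pair $\g_\nu\oplus\g_{-\nu}$ with the contribution to $\Psi(H/G_{00})$, and observing that the $\sigma_3$-anti-fixed part lands evenly in $\g_{10}$ and $\g_{01}$. Everything else (the choice of $\tilde\ce$, verification of~\eqref{eq:cond-larger}, and the dimension count) is a direct application of tools already assembled in Sections~\ref{sect1.5}--\ref{sect2}.
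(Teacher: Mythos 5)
Your argument is correct and is essentially the paper's own proof: the same choice $\tilde\ce=\Ker(\mu)$, the same use of the hypothesis to verify \eqref{eq:cond-larger}, and the same reduction to $\dim\z_\g(\tilde\ce)_{10}\ge m_\mu$ followed by \eqref{eq:larger-comp-raznesenie}; your explicit $\sigma_3$-eigenspace computation with the vectors $u\pm\sigma_1(u)$ merely spells out what the paper compresses into ``$\g_\mu\oplus\g_{-\mu}$ is contained in $\g_{01}\oplus\g_{10}$'' plus the dyad symmetry. One cosmetic slip: for $\nu\ne 0$ the space $\g_\nu^+=\g_\nu\cap\g^{\sigma_3}$ lies in $\h=\g_{00}\oplus\g_{11}$ but not in $\g_{11}$ itself (elements of $\g_\nu$ are never $\sigma_1$-eigenvectors since $\sigma_1$ sends $\g_\nu$ to $\g_{-\nu}$); what you actually use --- that $\h_\nu=\g_\nu^+$, so $\dim\g_\nu^+$ is the multiplicity of $\nu$ in $\Psi(H/G_{00})$ and the hypothesis forces $\g_\mu^+=0$ --- is correct.
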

\begin{proof}
Under this assumption,
$\tilde\ce:=\Ker(\mu) \subset \ce_{11}$ still contains 
$G_{00}$-regular elements, and $\dim\tilde\ce=\dim\ce_{11}-1$. Furthermore, $\z_\g(\tilde\ce)$ is $\vecs$-stable and
$\z_\g(\tilde\ce)=\z_\g(\ce_{11})\oplus \g_\mu\oplus\g_{-\mu}$. Recall that
$\z_\g(\ce_{11})$ is contained in $\g_{00}\oplus\g_{11}$.
Clearly, $\g_\mu\oplus\g_{-\mu}$ is also  $\vecs$-stable and is contained in 
$\g_{01}\oplus \g_{10}$. 

Since $\{\sigma_1,\sigma_2\}$ is a dyad, 
$\dim (\g_\mu\oplus\g_{-\mu})\cap \g_{10}=\dim (\g_\mu\oplus\g_{-\mu})\cap \g_{01}=m_\mu$.
Hence $\dim\z_\g(\tilde\ce)_{10}=m_\mu$, and the assertion follows from 
Eq.~\eqref{eq:larger-comp-raznesenie}.
\end{proof}

\begin{rmk}    \label{rmk:obv-modific}
1) Such a construction gives nothing, if all root multiplicities in $\Psi(G/G_{0\star})$ are equal to $1$. For instance, if $\sigma_1$ is of maximal rank.

2) The procedure described in the previous proof admits obvious modifications. Roughly 
speaking, if there are linearly independent roots $\mu_1,\mu_2,\dots$ in $\Psi(G/G_{0\star})$, 
with large multiplicities, such that 
$\mathbb Q\textsf{-span}\{\mu_1,\mu_2,\dots\}\cap \Psi(H/G_{00})=\varnothing$,  
then one can take $\tilde\ce=\Ker(\mu_1,\mu_2,\dots)$, see Proposition~\ref{prop:larger-comp2} below.
\end{rmk}

Although it is convenient to stick to one specific $\vecs$-commuting variety in theoretical considerations, it may happen that in concrete examples different $\vecs$-commuting varieties
exhibit different good (or bad) properties.

\begin{ex}    \label{ex:sl-sp}
Let $\sigma_1$ be an outer involution of $\g=\sltn$ with $\g^{\sigma_1}=\spn$. 
In~\cite[Sect.\,2]{comm-inv1}, we gave a method for describing all the dyads including $\sigma_1$,
which exploits the restricted root system $\Psi(G/G^{\sigma_1})$. This implies that one can find 
$\sigma_2$ conjugated $\sigma_1$ such that 
the inner involution $\sigma_3=\sigma_1\sigma_2$ has the fixed-point subalgebra
$\h=\mathfrak{sl}_{2m}\oplus \mathfrak{sl}_{2n-2m}\oplus\te_1$. The corresponding quaternionic decomposition is
\\[.7ex]
\begin{center}
\begin{picture}(140,25)(0,2) \setlength{\unitlength}{0.02in}
\put(-37,7){$\sltn=$}
    \put(-5,15){$\mathfrak{sp}_{2m}\oplus\mathfrak{sp}_{2n-2m}$}    
    \put(65,15){$\mathsf R(\varpi_1)\mathsf R(\varpi_1')$}
    \put(2,-2){$\mathsf R(\varpi_1)\mathsf R(\varpi_1')$}      
    \put(60,-2){$\mathsf R(\varpi_2){+}\mathsf R(\varpi_2'){+}\mathsf R(0)$}
\qbezier[60](-13,9),(53,9),(122,9)            
\qbezier[20](53,-8),(53,12),(53,24)      
\put(49.9,7){$\oplus$}   
\put(51,-14){{\color{my_color}$\sigma_2$}}
\put(132,7){{\color{my_color}$\sigma_1$}}
\end{picture}
\end{center}
\vskip3.5ex
where $\varpi_i$ (resp. $\varpi_i'$) are fundamental weights of $\mathfrak{sp}_{2m}$
(resp. $\mathfrak{sp}_{2n-2m}$), and $\mathsf R(\lb)$ is a simple module of the
respective simple Lie algebra with highest weight $\lb$. 

\textbullet \ \ Here $G=SL_{2n}$, $G_{0\star}=Sp_{2n}$, $H=SL_{2m}\times SL_{2(n-m)}\times T_1$, and
$G_{00}=Sp_{2m}\times Sp_{2(n-m)}$.
According to Table~VI in \cite[Ch.\,X]{helg},
we have $\Psi(G/G_{0\star})=\GR{A}{n-1}$, $\Psi(H/G_{00})=\GR{A}{m-1}+\GR{A}{n-m-1}$, and
all root multiplicities in $\Psi(G/G_{0\star})$ equals $4$.
Since $\Psi(H/G_{00})$ has fewer roots, Proposition~\ref{prop:larger-comp} implies that 
$\fe$ has an irreducible component of dimension $>\dim\g_{11}+(4-1)$ and our test 
map $\vp:\g_{10}\times\g_{11}\to \g_{01}$ is not equidimensional.

\textbullet \ \ 
Here $\dim\ce_{01}=\dim\ce_{10}=\min\{m,n-m\}$ and  any big \CSS\ in $\g_{10}\oplus\g_{01}$ is 
of dimension $2\min\{m,n-m\}$. By Theorem~\ref{thm:krit-odno-homog-CSS}(2), this means that 
all homogeneous \CSS\ in $\g_{10}\oplus\g_{01}$ are  $G_{00}$-conjugate, and therefore,
the $\vecs$-commuting variety related to the  commutator $\g_{10}\oplus\g_{01}\to \g_{11}$
has a unique standard component.
\end{ex}

\begin{ex}    \label{ex:E7-D6A1}
Let $\sigma$ be an involution of $\g=\GR{E}{7}$ with $\g^\sigma=\GR{D}{6}\times\GR{A}{1}$.
It can be included in two non-conjugate triads \cite{kollross}. One of them has
$\g_{00}=\GR{D}{4}\times\GR{A}{1}^3$,  with quaternionic decomposition
\\[.6ex]
\begin{center}
\begin{picture}(140,26)(0,2) \setlength{\unitlength}{0.02in}
\put(-35,7){$\GR{E}{7}=$}
    \put(15,15){$\GR{D}{4}\times\GR{A}{1}^3$}    
    \put(60,15){$\mathsf R(\varpi_4)\mathsf R(\varpi)\mathsf R(\varpi'')$}
    \put(-12,-2){$\mathsf R(\varpi_3)\mathsf R(\varpi)\mathsf R(\varpi')$}      
    \put(60,-2){$\mathsf R(\varpi_1)\mathsf R(\varpi')\mathsf R(\varpi'')$}
\qbezier[60](-13,9),(53,9),(122,9)            
\qbezier[20](53,-8),(53,12),(53,24)      
\put(49.9,7){$\oplus$}   
\put(51,-14){{\color{my_color}$\sigma_2$}}
\put(130,7){{\color{my_color}$\sigma_1$}}
\end{picture}
\end{center}
\vskip3.5ex
where $\varpi,\varpi',\varpi''$ are the fundamental weights of the simple factors of $\GR{A}{1}^3$,
and $\varpi_i$'s are fundamental weights of $\GR{D}{4}$. 
Here $\dim\g_{ij}=32$ for $(ij)\ne(00)$ and our test commutator map is
\[
    \vp: \mathsf R(\varpi_3)\mathsf R(\varpi)\mathsf R(\varpi') \times
    \mathsf R(\varpi_1)\mathsf R(\varpi')\mathsf R(\varpi'') \to
    \mathsf R(\varpi_4)\mathsf R(\varpi)\mathsf R(\varpi'') .
\]
Using Table~VI in \cite[Ch.\,X]{helg}, we find that 
$\rk(\GR{E}{7}/\GR{D}{6}\times\GR{A}{1})=4$ and
the restricted root system $\Psi (\GR{E}{7}/\GR{D}{6}\times\GR{A}{1})$ is of type $\GR{F}{4}$;
whereas $\rk(\GR{D}{6}\times\GR{A}{1}/\GR{D}{4}\times\GR{A}{1}^3)=
\rk(\GR{D}{6}/\GR{D}{4}\times\GR{A}{1}^2)=4$ and
the corresponding root system is of type $\GR{B}{4}$. 
The long (resp. short) roots of $\GR{B}{4}$ are also long (resp. short) roots of $\GR{F}{4}$, and
the multiplicities are $m_{\text {long}}=1$, $m_{\text {short}}=4$.
However, the root system $\GR{B}{4}$ has fewer short roots than $\GR{F}{4}$. Therefore,
Proposition~\ref{prop:larger-comp} applies here, and $\fe$ has an irreducible component of dimension at least $\m_{\text {short}}-1+\dim\g_{11}=35$. 
\end{ex}
\begin{ex}   \label{ex:F4-B4}
Let $\sigma$ be an involution of $\g=\GR{F}{4}$ with $\g^\sigma=\GR{B}{4}=\mathfrak{so}_9$.
Up to conjugacy, this involution can be included in a unique triad \cite{kollross}, with quaternionic decomposition
\begin{center}
\begin{picture}(140,28)(0,5) \setlength{\unitlength}{0.02in}
\put(2,7){$\GR{F}{4}=$}
    \put(30,15){$\GR{D}{4}$}    \put(60,15){$\mathsf R(\varpi_4)$}
    \put(28,0){$\mathsf R(\varpi_3)$}      \put(60,0){$\mathsf R(\varpi_1)$}
\qbezier[34](20,9),(50,9),(85,9)            
\qbezier[20](53,-3),(53,11),(53,24)      
\put(49.9,7){$\oplus$}   
\put(51,-10){{\color{my_color}$\sigma_2$}}
\put(90,7){{\color{my_color}$\sigma_1$}}
\end{picture}\ \ ,
\end{center}
\vskip3ex
where $\dim\mathsf R(\varpi_i)=8$ and the main diagonal represent the little involution of 
$\g^{\sigma_3}=\GR{B}{4}=\mathfrak{so}_9$.
Our test commutator is the bilinear $\GR{D}{4}$-equivariant mapping
$\mathsf R(\varpi_3)\times \mathsf R(\varpi_1)\to \mathsf R(\varpi_4)$.
Here $\rk(\GR{F}{4}/\GR{B}{4})=1$ and the restricted root system 
$\Psi (\GR{F}{4}/\GR{B}{4})$ is of type $\GR{BC}{1}$.
The restricted root system $\Psi (\GR{B}{4}/\GR{D}{4})$ is of type $\GR{C}{1}$. Since all little and 
big \CSS\ are one-dimensional, Proposition~\ref{prop:larger-comp} does not help here. Actually, 
the only standard components of $\fe$ are $\g_{10}\times\{0\}$ and 
$\{0\}\times \g_{11}$, both of dimension $8$. Below, we describe an ``intermediate'' non-standard 
irreducible component of dimension $11$. 

Let $x\in\g_{11}\simeq \mathsf R(\varpi_1)$ be a nonzero nilpotent element.
All such elements form a sole $7$-dimensional $SO_8$-orbit. By Lemma~\ref{prop.5},
$\dim SO_9{\cdot}x=2{\cdot}7=14$ and hence $\dim(\mathfrak{so}_9)^x=22$. The only nilpotent $SO_9$-orbit of dimension $14$ in 
$\mathfrak{so}_9$ is the orbit of short root vectors. The short roots of $\g^{\sigma_3}=\GR{B}{4}$ are also short
roots of $\g=\GR{F}{4}$. Therefore, a ``minimal including regular semisimple subalgebra'' is 
$\GRt{A}{1}$ in Dynkin's notation. This implies that
$\dim \z_\g(x)=30$ and completely determines the dimension matrix of  the spaces $\z_\g(x)_{ij}$: \ 
$\begin{array}{c|c}
21 & 4 \\  \hline 
4 & 1 \end{array}$.  Here the 1-dimensional space $\g_{11}$ is just the line $\bbk x$.
Then $\dim \ov{G_{00}{\cdot}(\z_\g(x)_{10}\oplus \bbk x)}=4+7=11$. Using the projection 
$\fe\to \g_{11}$, one can prove that 
$\ov{G_{00}{\cdot}(\z_\g(x)_{10}\oplus \bbk x)}$ is the only new irreducible component of $\fe$.
It is contained in $\N_{10}\times\N_{11}$. Thus, $\fe$ has three irreducible components.
\end{ex}

\section{Triads of Hermitian involutions and simple Jordan algebras}  
\label{sect5}

\noindent
In this section, $\g$ is assumed to be simple.
We say that $\sigma\in \mathsf{Inv}(\g)$ is {\it Hermitian\/} if $\g_0$ is not semisimple.
All these involutions are associated with $\BZ$-gradings of $\g$ with only three nonzero
terms ({\it short gradings}), i.e., with parabolic subalgebras with abelian nilpotent radical.
Let $\g=\g(-1)\oplus\g(0)\oplus\g(1)$ be a short grading. Then $\p=\g(0)\oplus\g(1)$ is a
(maximal) parabolic subalgebra with abelian nilpotent radical, and one defines a Hermitian 
involution
$\sigma$ by letting $\g^{\sigma}=\g(0)$, $\g_1^{(\sigma)}=\g(-1)\oplus\g(1)$.

Since $\g$ is simple, the centre of $\g(0)$ is one-dimensional and there is a {\sl unique\/} 
$h\in\g(0)$ such that $\g(i)=\{x\in \g\mid [h,x]=2ix\}$. 
By \cite[\S\,2.3]{vi76}, the reductive group $G(0)$ has finitely many orbits in $\g(1)$.
Let $\co$ be the dense $G(0)$-orbit in $\g(1)$ and $e\in\co$. Set $\g(i)^e=\g(i)\cap \g^e$.

For future reference, we provide a proof of the following well-known assertion.

\begin{lm}
 $h\in [\g,e] \ \Longleftrightarrow \ \g(0)^e$ \ is reductive. 
\end{lm}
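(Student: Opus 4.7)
My plan is to handle both directions through the $\mathfrak{sl}_2$-module structure of $\g$ with respect to a Jacobson--Morozov triple containing $e$.

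The implication $(\Rightarrow)$ is straightforward. If $h=[z,e]$ for some $z\in\g$, then decomposing $z$ along the grading and using $[\g(i),e]\subset\g(i+1)$ shows that only the $\g(-1)$-component of $z$ contributes to the $\g(0)$-component of the bracket. Hence $h=[f',e]$ for some $f'\in\g(-1)$, and a direct check gives that $\{e,h,-f'\}$ is an $\mathfrak{sl}_2$-triple. Then $\g(0)^e=\g^e\cap\g^h$ is the centraliser of the semisimple element $h$ in $\g^e$, which is the reductive Levi subalgebra by the standard structure of centralisers of nilpotent elements.

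For $(\Leftarrow)$, I would pick an $\mathfrak{sl}_2$-triple $\{e,H,f\}$ adapted to the grading (with $H\in\g(0)$ and $f\in\g(-1)$, as produced by Vinberg's graded Jacobson--Morozov), and then show that $H=h$. Decomposing $\g$ into irreducibles $V(m)$ under this triple, the shortness of the grading forces $m\le 2$ together with a strong constraint on the grading degree of the highest weight vector: $V(2)$'s must have HWV in $\g(1)$; $V(1)$'s have HWV in $\g(0)$ or $\g(1)$; trivial $V(0)$'s may sit in any of the three pieces. Two hypotheses now shrink this list. The density of $\co$ is equivalent to $[\g(0),e]=\g(1)$, hence to $\g(1)\cap\g^H=0$; the Killing-form pairing $\g(1)\times\g(-1)\to\bbk$, which respects $\ad H$-weights, then forces $\g(-1)\cap\g^H=0$ as well, so no trivial summands sit in $\g(\pm1)$. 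Reductivity of $\g(0)^e$ kills its positive $\ad H$-weight part (the nilradical contribution), which is spanned by HWVs of $V(1)$-submodules with HWV in $\g(0)$, so these submodules vanish. Finally, the identity $\dim\g(1)=\dim\g(-1)$, applied summand by summand, rules out the remaining $V(1)$'s (those with HWV in $\g(1)$).

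Only $V(2)$'s with HWV in $\g(1)$ and trivials in $\g(0)$ remain, and on both families $\ad h$ and $\ad H$ act identically (by $(2,0,-2)$ on the three pieces of a $V(2)$, and by $0$ on a trivial in $\g(0)$). Hence $\ad(h-H)=0$ on $\g$, and simplicity of $\g$ yields $h=H=[e,f]\in[\g,e]$. The principal obstacle is the $(\Leftarrow)$ direction, specifically the combinatorial step of ruling out the ``good'' $V(1)$'s via $\dim\g(1)=\dim\g(-1)$; once the $\mathfrak{sl}_2$-decomposition is set up, the rest is transparent.
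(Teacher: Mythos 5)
Your proof is correct and follows essentially the same route as the paper's: both directions hinge on Vinberg's grading-adapted $\mathfrak{sl}_2$-triple, and the converse is proved by showing that its semisimple member coincides with the grading element $h$, using the dense orbit, the Killing-form duality between $\g(1)$ and $\g(-1)$, and reductivity to pin down the $\mathfrak{sl}_2$-module structure of $\g$. The paper organises the elimination of unwanted summands via $[\g,e]=(\g^e)^\perp$ and the complement $\m=\ka^\perp\cap\g(0)$ rather than your summand-by-summand bookkeeping, but the ingredients and the final structural conclusion ($\g$ is a sum of three- and one-dimensional $\mathfrak{sl}_2$-modules, forcing $\tilde h=h$) are the same.
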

\begin{proof}
1)  If $h\in [\g,e]$, then $h=[e,f]$ for some $f\in\g(-1)$ and therefore, $\{e,h,f\}$ is an $\tri$-triple.
Then $\g(0)^e=\z_\g(e,h,f)$, which is reductive.

2) For $e\in\co$, we have $\dim\g(0)^e=\dim\g(0)-\dim\g(1)$. Using the Kirillov--Kostant
{form} associated with $e$, we see that $\dim\g(-1)-\dim\g(-1)^e=\dim\g(0)-\dim\g(0)^e$. Hence
$\g(-1)^e=0$ and $\g^e=\g(0)^e\oplus \g(1)$. Set $\ka=\g(0)^e$, and  let $(\ )^\perp$
denote the orthocomplement with respect the Killing form.
Then $[\g,e]=(\g^e)^\perp=\g(1)\oplus (\ka^\perp\cap\g(0))$. 
Now, if $\ka$ is reductive, then the restriction of the Killing form to $\ka$ is non-degenerate and $\m:=\ka^\perp\cap\g(0)$ is a $\ka$-stable complement to $\ka$ in $\g(0)$.
Since $\dim [\g(-1),e]=\dim \g(1)=\dim\g(0)-\dim\ka$, we conclude that $\m=[\g(-1),e]$.  
Thus, $e$ acts on $\g$ as follows:
\beq  \label{eq:strukt-e-mod}
    \left\{\begin{array}{cc} 
    \g(-1) \stackrel{\sim}{\longrightarrow} \m \stackrel{\sim}{\longrightarrow} \g(1)\to 0\\
                                     \ka\to 0
    \end{array} \right.
\eeq
Let $\{e,\tilde h,f\}$ be an $\tri$-triple with $\tilde h\in \g(0)$ and $f\in \g(-1)$.
Such a triple always exists, see \cite[\S\,2]{vi79}. Then Eq.~\eqref{eq:strukt-e-mod} shows 
that $\g$ is a sum of $3$-dimensional and $1$-dimensional $\tri$-modules, and that
$\g^{\tilde h}=\ka\oplus\m$. Since $\g(0)$ has one-dimensional centre, one must have 
$\tilde h=h$. Thus, $h\in [\g,e]$. 
\end{proof}

\begin{thm}   \label{thm:herm-triad}
Suppose that a Hermitian involution $\sigma=\sigma_1$ has the property that\/ $\g(0)^e$ \ is reductive. Then $\sigma_1$ can be included in a triad.
\end{thm}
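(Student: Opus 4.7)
The plan is to construct $\sigma_2$ and $\sigma_3$ as conjugates of $\sigma_1$ inside the subgroup of $G$ coming from a suitable $\tri$-triple, using the classical quaternion/Cayley construction. First, I will apply the preceding lemma: the reductivity of $\g(0)^e$ furnishes an $\tri$-triple $\{e,h,f\}\subset\g$ with $h$ the grading element, $e\in\g(1)$, and $f\in\g(-1)$. Let $\phi:SL_2\to G$ be the associated homomorphism. Because the $\BZ$-grading is short (eigenvalues of $\ad h$ on $\g$ are $-2,0,2$), the adjoint action of this $\tri$ on $\g$ decomposes only into copies of the trivial module $V_0$ and the adjoint module $V_2$; this is the key technical input.

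Next, I exploit the quaternion group sitting inside $SL_2$. Define
\[
  \iota_1 := \exp\!\bigl(\tfrac{\pi i}{2}\,h\bigr),\qquad
  \iota_2 := \exp\!\bigl(\tfrac{\pi i}{2}(e+f)\bigr),\qquad
  \iota_3 := \iota_1\iota_2,
\]
which in the defining $2$-dimensional representation of $SL_2$ are precisely the imaginary quaternion units: each has square $-I$ and any two of them anticommute. Because $-I\in SL_2$ acts trivially on both $V_0$ and $V_2$, the element $\phi(-I)$ acts trivially on $\g$. Consequently the three inner automorphisms $\sigma_k := \Ad(\phi(\iota_k))$, $k=1,2,3$, are genuine involutions of $\g$, and the anticommutation relations in $SL_2$ descend, modulo $\phi(-I)$, to commutation relations on $\g$; so $\sigma_1,\sigma_2,\sigma_3$ pairwise commute and $\sigma_1\sigma_2=\sigma_3$.

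It remains to identify $\sigma_1$ with the given Hermitian involution and to show that the three $\sigma_k$ are mutually $\Int(G)$-conjugate. Since $\ad h$ acts on $\g(k)$ as multiplication by $2k$, the operator $\Ad(\phi(\iota_1))=\exp(\tfrac{\pi i}{2}\ad h)$ acts on $\g(k)$ as $(-1)^k$, i.e., as the original Hermitian $\sigma_1$. For conjugacy, the Cayley element $c:=\exp\!\bigl(\tfrac{\pi}{4}(f-e)\bigr)\in SL_2$ satisfies $\Ad(c)(h)=e+f$ by a routine $\tri$-computation in the adjoint representation; therefore $\phi(c)\in G$ conjugates $\sigma_1$ to $\sigma_2$, and an analogous rotation handles $\sigma_3$ (alternatively, any two order-$4$ elements of $SL_2$ with eigenvalues $\pm i$ are $SL_2$-conjugate). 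The only step requiring real care is the triviality of $\phi(-I)$ on $\g$, and this is exactly where the short-grading hypothesis enters via the restriction on which $\tri$-modules occur.
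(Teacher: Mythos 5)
Your argument is correct, and it reaches the triad by a construction that differs from the paper's in an interesting way. The paper also starts from the $\tri$-triple $\{e,h,f\}$ supplied by the lemma, but it then builds $\sigma_2$ ``by hand'': it invokes the fact (quoted from an earlier paper) that $\ka=\g(0)^e$ is the fixed-point algebra of an involution of $\g(0)$ with $(-1)$-eigenspace $\m=[\g(-1),e]$, extends that involution to all of $\g$ by decreeing $\sigma_2(e)=f$ and $\sigma_2([x,e])=[-x,f]$, and finally observes that $\sigma_1,\sigma_2$ restrict to distinct involutions of $\langle e,h,f\rangle$, so all three are already conjugate under $\mathsf{Aut}\langle e,h,f\rangle\simeq PSL_2$. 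You instead obtain all three involutions at once as $\Ad\phi(\iota_k)$ for the quaternion units $\iota_k\in SL_2$, with the short-grading observation that only $V_0$ and $V_2$ occur (hence $\phi(-I)$ is trivial on $\g$) doing the work that the paper's extension-and-well-definedness step does. This buys you two things: you never have to check that an ad hoc extension of an involution of $\g(0)$ is actually an automorphism of $\g$ (it is $\Ad$ of a group element by construction), and you avoid the external reference. The conjugacy step at the end is the same in spirit in both proofs (conjugation inside the distinguished $\tri$); your Cayley element is exactly the $PSL_2$-conjugation the paper alludes to. What the paper's version buys in exchange is the explicit quaternionic decomposition $\g_{00}=\ka$, $\g_{01}=\m$, $\g_{10}=[\m,e-f]$, $\g_{11}=[\m,e+f]$, which is used later for the Jordan-algebra identification; your construction yields the same decomposition but you would still need to extract it. One cosmetic caveat: since $\bbk$ is an abstract algebraically closed field of characteristic $0$, expressions like $\exp\bigl(\tfrac{\pi i}{2}h\bigr)$ should be read as the explicit matrices $\mathrm{diag}(i,-i)$, $\left(\begin{smallmatrix}0&i\\ i&0\end{smallmatrix}\right)$, etc., in $SL_2(\bbk)$ (and the Cayley element as $\tfrac{1}{\sqrt2}\left(\begin{smallmatrix}1&-1\\ 1&1\end{smallmatrix}\right)$), rather than as transcendental exponentials; all the identities you use are then algebraic and the argument goes through verbatim.
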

\begin{proof}
Using the notation of the previous proof, we set $\ka=\g(0)^e$ and take (the unique)
$f\in\g(-1)$ such that $h=[e,f]$. Then $\{e,h,f\}$ is an $\tri$-triple,
$[e,\g(-1)]=:\m$ is a complementary $\ka$-submodule to $\ka$ in $\g(0)$, and
$[e,[e,\g(-1)]]=\g(1)$. This also shows that $\g(-1)$, $\m$, and $\g(1)$ are isomorphic $\ka$-modules. 

In this case, $\ka$ is the fixed-point subalgebra of an involution of $\g(0)$ and
the $(-1)$-eigenspace of this involution is $\m$  
(see Proof of Prop.\,3.3 in~\cite{manuscr94}).
Let $\sigma_2$ denote this involution of $\g(0)$. Then $\sigma_2(h)=-h$.
We extend $\sigma_2$ to the whole of $\g$ by letting $\sigma_2(e)=f$.
Then $\sigma_2([x,e])=[-x,f]$ for all $x\in\m$, which defines $\sigma_2$ on  $\g(1)$
and shows that $\sigma_2(\g(1))\subset \g(-1)$.
Clearly, $\sigma_1$ and $\sigma_2$ commute. Furthermore, $\sigma_1$ and $\sigma_2$
are different involutions of the $3$-dimensional simple subalgebra $\langle e,h,f\rangle$. This implies that 
$\sigma_1$, $\sigma_2$, and $\sigma_3=\sigma_1\sigma_2$ are already conjugate with respect
to $PSL_2=\mathsf{Aut}\langle e,h,f\rangle$. In particular, $\{\sigma_1,\sigma_2,\sigma_3\}$ is a triad.
\end{proof}

This theorem can be derived from the classification of triads, but our direct construction 
allows to visualise the resulting quaternionic decomposition rather explicitly. We have

\hbox to \textwidth{\enspace \refstepcounter{equation} (\theequation) 
\hfil   \label{eq:hermitian-triple}
{\setlength{\unitlength}{0.02in}
\begin{picture}(55,30)(0,5)
\put(-32,10){$\g=$}
    \put(3,17){$\ka$}    \put(36,17){$\m$}
    \put(-15,2){$[\m, e-f]$}      \put(28,2){$[\m,e+f]$}
\qbezier[35](-15,11),(22,11),(60,11)            
\qbezier[20](23,1),(23,15),(23,29)      
\put(19.9,9){$\oplus$}   
\put(21,-8){{\color{my_color}$\sigma_2$}}
\put(65,9){{\color{my_color}$\sigma_1$}}
\end{picture}  \hfil
}}
\vskip3ex
\noindent
Here $h\in\m=\g_{01}$, $e+f\in [\m, e-f]=\g_{10}$, and $e-f\in [\m,e+f]=\g_{11}$. Note also that
$\ka\oplus\m=\g(0)$ and $ [\m, e-f]\oplus [\m,e+f]=\g(1)\oplus \g(-1)$.

{\bf Remark.} If $\g(0)^e$ is not reductive, then such a triad may not exist. For instance, if
$\g=\mathfrak{sl}_{2n}$ and $\g_0=\mathfrak{sl}_{m}{\times}\mathfrak{sl}_{2n-m}{\times}\te_1$ with
$n\ne m$ and $m$ odd, then there is no respective triad, see \cite[3.2]{vinb}.

As is well known, if $\g(0)^e$ is reductive, then $\g(-1)$ has a structure of a simple 
Jordan algebra. Namely, for $x,y\in \g(-1)$, we set 
\[
         x\circ y=[x,[e,y]] \in \g(-1) .
\]
Then $\{\g(-1), \circ\}$ is a simple Jordan algebra \cite{tits,kantor}.
(See also \cite[Sect.\,4]{kac80} for possible generalisations). Here 
$\ka=\g_{00}$ is the Lie algebra of derivations of $\{\g(-1),\circ\}$.
The triad constructed in Theorem~\ref{thm:herm-triad} is called a {\it Jordan triad}.

\begin{df}  \label{def:Jordan}
The {\it commuting variety\/} of a Jordan algebra $\{\eus J, \circ\}$ is 
\\[.6ex]
\centerline{$\fe(\eus J)=\{(x,y)\mid x\circ y=0\}\subset \eus J\times \eus J$.}
\end{df}

The Jordan triad \eqref{eq:hermitian-triple} provides a  link between the commutator
mapping $\vp:\g_{10}\times \g_{11} \to \g_{01}$ and the commuting variety of the simple Jordan 
algebra $\g(-1)$.

\begin{thm}  \label{thm:jord-com-var}
The commuting variety of the Jordan algebra $\{\g(-1),\circ\}$
is isomorphic to the zero fibre of the commutator
mapping $\vp: \g_{10}\times \g_{11}=[\m,e-f]\times [\m, e+f] \to \m=\g_{01}$.
\end{thm}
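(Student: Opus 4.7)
\begin{proof*}[Proof plan]
The strategy is to transport the Jordan product along an explicit $\ka$-linear isomorphism of ambient affine spaces built from the $\tri$-triple $\{e,h,f\}$, in such a way that $\vp$ is carried to the Jordan multiplication up to a nonzero scalar. The triad described in \eqref{eq:hermitian-triple} makes all the relevant maps completely canonical.

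\emph{Step 1: the candidate maps.} I would introduce three linear maps
\[
\Phi_0:\g(-1)\to\m=\g_{01},\qquad \Phi_1:\g(-1)\to\g_{10},\qquad \Phi_2:\g(-1)\to\g_{11}
\]
given by $\Phi_0(v)=[e,v]$, $\Phi_1(v)=[[e,v],e-f]$, and $\Phi_2(v)=[[e,v],e+f]$. Standard $\tri$-theory shows that $\ad(e)\colon \g(-1)\to\m$ is a linear isomorphism; a short eigenvalue computation inside each $3$-dimensional $\tri$-submodule of $\g$ shows that $\ad(e\pm f)$ restricts to an isomorphism $\m\isom [\m,e\pm f]$. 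Hence each $\Phi_i$ is an isomorphism of vector spaces onto the correct eigenspace $\g_{ij}$.

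\emph{Step 2: the key identity.} The heart of the argument is to prove
\[
\vp(\Phi_1(u),\Phi_2(v))=4\,\Phi_0(u\circ v)\qquad\text{for all }u,v\in\g(-1).
\]
I would first rewrite $\Phi_1(u)=-u'-2u$ and $\Phi_2(v)=-v'+2v$, where $u':=[e,[e,u]]\in\g(1)$ and $v':=[e,[e,v]]\in\g(1)$; this reduction uses only the identity $[[e,u],e]=-[e,[e,u]]$ together with $[u,f]=0$ (since $[\g(-1),\g(-1)]=0$ in a short grading) and $[h,u]=-2u$. Expanding $[\Phi_1(u),\Phi_2(v)]$ and killing the terms in $[\g(1),\g(1)]=[\g(-1),\g(-1)]=0$ collapses the four-term expansion to $2([u,v']-[u',v])$. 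Two Jacobi applications then yield the symmetric expressions
\[
[u,v']=-[[e,u],[e,v]]+[e,u\circ v],\qquad [u',v]=-[[e,u],[e,v]]-[e,u\circ v],
\]
so that the mixed term $[[e,u],[e,v]]$ cancels and the difference is $2[e,u\circ v]=2\Phi_0(u\circ v)$.

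\emph{Step 3: conclusion.} Since $\Phi_0$ is a linear isomorphism, Step~2 implies that the linear bijection $\Xi\colon \g(-1)\times\g(-1)\to\g_{10}\times\g_{11}$, $(u,v)\mapsto (\Phi_1(u),\Phi_2(v))$, sends the ideal defining $\fe(\g(-1))$ onto the ideal defining $\vp^{-1}(0)$. Hence $\Xi$ restricts to the desired isomorphism $\fe(\g(-1))\isom \vp^{-1}(0)$.

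\emph{Main obstacle.} The only genuinely nontrivial point is choosing the candidate maps $\Phi_1,\Phi_2$ so that the Jacobi bookkeeping in Step~2 closes up. Once one notices that $\Phi_1(u)$ and $\Phi_2(v)$ decompose naturally into their $\g(\pm 1)$-components, the vanishing $[\g(\pm 1),\g(\pm 1)]=0$ coming from the short grading makes the entire computation go through transparently; the remaining verifications are routine.
\end{proof*}
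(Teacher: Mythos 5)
Your argument is correct and is essentially the paper's own proof: both parametrise $\g_{10}\times\g_{11}$ by $\g(-1)\times\g(-1)$ via $x\mapsto[[e,x],e\mp f]$, reduce the claim by a Jacobi-identity computation to the statement that $\vp$ of such a pair is a fixed nonzero multiple of $[e,x\circ y]$, and conclude using the injectivity of $\ad e$ on $\g(-1)$. (Only a cosmetic slip in Step 2: with your conventions one has $[[e,u],e-f]=-u'+2u$ and $[[e,v],e+f]=-v'-2v$, so the constant comes out as $-4$ rather than $+4$; this affects nothing in the conclusion.)
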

\begin{proof}
Any element of $\m$ can uniquely be written as $[x,e]$ with $x\in \g(-1)$. So, if 
$[x,e], [y,e]\in \m$ are arbitrary, then $[[x,e],e-f]\in \g_{10}$ and $[[y,e],e+f]\in \g_{11}$  are 
arbitrary and $\vp$ takes the corresponding pair to 
$\bigl[[[x,e],e-f],[[y,e],e+f]\bigr]\in\m=\g_{01}$. It is a good exercise in the Jacobi identity to check
that 
\[
    \bigl[[[x,e],e-f],[[y,e],e+f]\bigr]=2\bigl[ [[x,e],y],e\bigr] .
\]
(One should use the fact that $h=[e,f]$ is the defining element of the short grading. Hence
$[[x,e],f]=2x$, etc.)
Since $a=[[x,e],y]\in \g(-1)$ and $\g^e\cap \g(-1)=0$, we have $[a,e]=0$ if and only if $a=0$.
Therefore, 

$([[x,e],e-f],[[y,e],e+f])\in \vp^{-1}(0)\Leftrightarrow  [[x,e],y] =0\Leftrightarrow (x,y)\in \fe(\g(-1))$.
\end{proof}

If $\eus J$ is a simple Jordan algebra, then the operator $L_x:\eus J\to\eus J$, $L_x(y)=x\circ y$,
is invertible for almost all $x$.
Therefore, $\eus J\times\{0\}$ and $\{0\}\times\eus J$ 
are two irreducible components of $\fe (\eus J)$. Clearly, there are some other irreducible 
components. It is an interesting problem to determine all the components 
of $\fe (\eus J)$ and their dimensions.

The  list of Hermitian involutions leading to Jordan triads and simple Jordan algebras is given 
below. We point out the semisimple subalgebra $\es=[\g(0),\g(0)]$ and the structure of $\g(1)$ as 
$\es$-module. Here 
the $\varpi_i$'s are the fundamental weights of $\es$.
\begin{center}
\begin{tabular}{c|ccccc|c}
& $\g$ & $\es$ & $\g(1)$ & $\ka$ & $\eus J$ & \\ \hline
1 & $\sltn$ & $\sln{\oplus}\sln$ & $\mathsf R(\varpi_1){\otimes}\mathsf R(\varpi_1')$ & $\sln$ & 
$n{\times} n $-matrices  & $n\ge 1$ \\
2 & $\spn$ & $\sln$ & $\mathsf R(2\varpi_1)$ & $\son$ & symmetric $n{\times} n $-matrices & $n\ge 2$\\
3 & $\mathfrak{so}_{4n}$ & $\sltn$ & $\mathsf R(\varpi_2)$ & $\spn$  
&  skew-symm. $2n{\times} 2n $-matrices & $n\ge 2$ \\
4 & $\mathfrak{so}_{n+2}$ & $\son$ &  $\mathsf R(\varpi_1)$ & $\mathfrak{so}_{n-1}$ & spin-factor & $n\ge 4$ \\
5 & $\GR{E}{7}$ & $\GR{E}{6}$ & $\mathsf R(\varpi_1)$ & $\GR{F}{4}$ & the Albert algebra & \\ 
\hline
\end{tabular}
\end{center}
\vskip1ex
\begin{rema}
The Jordan multiplication in the space $\mathsf{Skew}_{2n}$ of usual skew-symmetric matrices is 
defined as follows. If $A,B,J\in \mathsf{Skew}_{2n}$ and $J$ is non-degenerate, then
$A\circ B=\frac{1}{2}(AJB+BJA)$.
\end{rema}
There are some coincidences for small $n$. Namely, 
\begin{center}
Item~1 ($n=1$) \ $\simeq$ \ Item~2 ($n=1$) , \
Item~1 ($n=2$) \ $\simeq$ \ Item~4 ($n=3$) .
\end{center}
Furthermore, if $n=1$ in Item~3, then $\g$ is not simple. This explains the conditions on $n$ given in the last column.
For Item~2, the Hermitian involution (of $\spn$) is of maximal rank and
the respective Jordan algebra 
is the algebra $\mathsf{Sym}_n$ of symmetric $n\times n$-matrices.
Therefore, by Theorems~\ref{thm:main-EQ} and \ref{thm:jord-com-var}, the 
multiplication morphism
$\circ:  \mathsf{Sym}_n\times \mathsf{Sym}_n\to \mathsf{Sym}_n$ is equidimensional, i.e., 
$\dim\fe(\mathsf{Sym}_n)=\dim\mathsf{Sym}_n=(n^2+n)/2$.

In all other cases, the multiplication morphism $\eus J\times\eus J\to \eus J$ is not 
equidimensional, see Proposition~\ref{prop:larger-comp2}. Before checking this,
we give an "elementary" explanation for the Jordan algebra of all matrices (Item 1).

\begin{ex}   \label{ex:jordan-algebra-matrizen} 
Let $\mathsf M$ be the associative (also Lie and Jordan) algebra  of all $n\times n$-matrices.
That is, we exploit the usual matrix product, the Lie bracket $[A,B]=AB-BA$, and the Jordan product $A\circ B=(AB+BA)/2$.
Let  $\chi(B)=\det (\lb I- B)=\sum_i \chi_{n-i}(B)\lb^i$ be the characteristic polynomial of a matrix $B$.
Let $\z^{J}(B)$ and $\z^{Lie}(B)$ denote the Jordan and Lie centraliser of $B$, respectively.
Consider the subvariety 
\[
   \mathsf M^{\langle 2\rangle}=\{B\in \mathsf M \mid \chi_{2i+1}(B)=0 \ \forall i\}.
\]
It is an irreducible complete intersection and $\codim \mathsf M^{\langle 2\rangle}=[n+1/2]$
(cf. \cite[Lemma\,5.3]{ri87}).
We also need the dense open subset $\mathsf M^{reg}$ of regular elements (in the Lie algebra sense)
and the subvariety 
\[
    \mathsf M^{ev}=\{ B\in  \mathsf M \mid B \ \text{ is conjugate to } \ {-}B\}.
\]
If $B\in \mathsf M^{ev}$ and $ABA^{-1}=-B$, then $A\in \z^J(B)$ and the mapping
$C\in \z^{Lie}(B) \mapsto AC \in \z^{J}(B)$ is a linear isomorphism. In particular,
$\dim \z^{J}(B)=\dim \z^{Lie}(B)$.
The following is clear:
\begin{itemize}
\item \ $\mathsf M^{\langle 2\rangle}\cap \mathsf M^{reg} \ne\varnothing$ (it contains a regular nilpotent element);
\item \ $\mathsf M^{ev}\subset \mathsf M^{\langle 2\rangle}$ and 
$\mathsf M^{ev}\cap \mathsf M^{reg} \ne\varnothing$;
\end{itemize}
\begin{utve} We have 
 $\mathsf M^{\langle 2\rangle}\cap \mathsf M^{reg}\subset \mathsf M^{ev}$. In particular, $\dim\z^J(B)=n$ for almost all $B\in \mathsf M^{\langle 2\rangle}$.
\end{utve}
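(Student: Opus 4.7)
The plan is to prove the inclusion $\mathsf M^{\langle 2\rangle}\cap \mathsf M^{reg}\subset \mathsf M^{ev}$ via a direct comparison of characteristic polynomials, and then deduce the dimension statement using the isomorphism $\z^{Lie}(B) \isom \z^{J}(B)$ mentioned just before the claim.

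First I would compute $\chi_{-B}$ in terms of $\chi_B$. Since
\[
   \chi_{-B}(\lambda)=\det(\lambda I+B)=(-1)^n\det(-\lambda I-B)=(-1)^n\chi_B(-\lambda),
\]
and since $B\in\mathsf M^{\langle 2\rangle}$ means $\chi_{2i+1}(B)=0$ for every $i$, the polynomial $\chi_B(\lambda)$ contains only monomials $\lambda^{k}$ of the parity of $n$. Hence $\chi_B(-\lambda)=(-1)^n\chi_B(\lambda)$, and therefore $\chi_{-B}(\lambda)=\chi_B(\lambda)$. In other words, $B$ and $-B$ have the same characteristic polynomial whenever $B\in\mathsf M^{\langle 2\rangle}$.

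Next I would invoke the standard fact that a matrix $B\in\mathsf M^{reg}$ is determined up to $GL_n$-conjugacy by its characteristic polynomial (regularity means each eigenvalue carries a single Jordan block, so the conjugacy class of $B$ equals the companion matrix class of $\chi_B$). Combining with the previous step, any $B\in\mathsf M^{\langle 2\rangle}\cap\mathsf M^{reg}$ is conjugate to $-B$, i.e.\ $B\in\mathsf M^{ev}$, which gives the inclusion.

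For the dimension assertion, recall that $\mathsf M^{\langle 2\rangle}$ is an irreducible complete intersection, so the nonempty open subset $\mathsf M^{\langle 2\rangle}\cap\mathsf M^{reg}$ is dense in $\mathsf M^{\langle 2\rangle}$. For $B$ in this dense open set, the inclusion just proved puts $B$ into $\mathsf M^{ev}$; choosing $A$ with $ABA^{-1}=-B$, the linear isomorphism $C\mapsto AC$ from $\z^{Lie}(B)$ to $\z^{J}(B)$ noted before the claim yields $\dim\z^{J}(B)=\dim\z^{Lie}(B)=n$, the last equality being the standard dimension of the Lie centraliser of a regular matrix.

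The only real step to execute carefully is the parity bookkeeping in the characteristic polynomial, and the appeal to the fact that regular matrices are conjugacy-classified by their characteristic polynomial; both are elementary and uniform in $n$, so no serious obstacle is expected.
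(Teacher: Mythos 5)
Your proof is correct and follows essentially the same route as the paper's: the paper simply asserts that $B$ and $-B$ are both regular with the same eigenvalues and Jordan blocks, while you make the key point explicit by showing that the vanishing of the odd coefficients forces $\chi_{-B}=\chi_B$, and then use that regular matrices are classified up to conjugacy by their characteristic polynomial. The deduction of $\dim\z^J(B)=n$ via the isomorphism $C\mapsto AC$ and the density of $\mathsf M^{\langle 2\rangle}\cap\mathsf M^{reg}$ in the irreducible variety $\mathsf M^{\langle 2\rangle}$ is exactly the intended argument.
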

\begin{proof} If $B\in \mathsf M^{\langle 2\rangle}\cap \mathsf M^{reg}$, then $B$ and $-B$ are both regular and have the same Jordan blocks and the same eigenvalues. Hence $B$ and $-B$ are conjugate.
\end{proof}

Let $\fe^J(\mathsf M)$ denote the Jordan commuting variety and $p:\fe^J(\mathsf M)\to \mathsf M$ 
the projection to the first factor. The previous analysis implies that
\[
\dim p^{-1}(\mathsf M^{\langle 2\rangle}\cap \mathsf M^{reg})=\dim \mathsf \mathsf M^{\langle 2\rangle}+n=n^2+[n/2] .
\]
Thus, $\dim \fe^J(\mathsf M)\ge n^2+[n/2] > \dim\mathsf M$.
One can prove that this yields an irreducible component of maximal dimension; that is, 
$\dim  \fe^J(\mathsf M)= n^2+[n/2]$.
\end{ex}

The next table contains information on the restricted root systems associated with Jordan triads.
For a Hermitian involution $\sigma$,
we point out Lie algebras $\g$, $\h=\g^\sigma$, $\g_{00}=\ka$, the restricted root systems
$\Psi(G/H)$ and $\Psi(H/G_{00})$, and the multiplicity of the short roots in $\Psi(G/H)$, 
denoted $m_{\text{short}}$. For all items in the table, the multiplicity of long roots in $\Psi(G/H)$
equals $1$ and $\Psi(H/G_{00})$ is embedded in $\Psi(G/H)$ as a subset of {\sl short} roots.

\begin{center}
\begin{tabular}{c|>{$}c<{$}>{$}c<{$}>{$}c<{$}>{$}c<{$}c>{$}c<{$}}
   & \g    & \h        & \g_{00} & \Psi(G/H) & $m_{\text{short}}$ & \Psi(H/G_{00}) \\ \hline
1 & \sltn & \sln\oplus\sln\oplus\te_1 & \sln & \GR{C}{n} & 2 & \GR{A}{n-1} \\
2 & \spn & \gln    &  \son   & \GR{C}{n} & 1 & \GR{A}{n-1} \\ 
3 & \mathfrak{so}_{4n} & \mathfrak{gl}_{2n} & \spn & \GR{C}{n} & 4 & \GR{A}{n-1} \\
4 & \mathfrak{so}_{n+2} & \son\oplus\mathfrak{so}_2 &  \mathfrak{so}_{n-1} & \GR{C}{2} & $n-2$ & \GR{A}{1}\\
5 & \GR{E}{7} & \GR{E}{6}\oplus\te_1 & \GR{F}{4} &  \GR{C}{3} & 8 & \GR{A}{2} \\
\hline
\end{tabular}
\end{center}
\vskip2ex

\noindent
The root system of type $\GR{C}{n}$ has some short roots
that are not roots of $\GR{A}{n-1}$. Therefore, 
Proposition~\ref{prop:larger-comp} guarantees the existence of a subvariety in $\fe(\eus J)$
of dimension $\dim \eus J+m_{\text{short}}-1$, which is larger than the dimension of a generic 
fibre if $m_{\textrm{short}}>1$. However, a clever choice of $\tilde\ce\subset \ce_{11}$ 
(cf. Remark~\ref{rmk:obv-modific}(2)) allows to get a better lower bound on $\dim\fe(\eus J)$:

\begin{prop}   \label{prop:larger-comp2}
For all items in the table, we have $\dim\fe(\eus J)\ge \dim\eus J+(m_{\text{\rm short}}-1)[r/2]$,  
where $r$ is the rank of\/ $\Psi(G/H)$.
\end{prop}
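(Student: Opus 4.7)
The plan is to apply the multi-root version of Proposition~\ref{prop:larger-comp} indicated in Remark~\ref{rmk:obv-modific}(2), using the combinatorics of the inclusion $\Psi(H/G_{00}) \subset \Psi(G/H)$ recorded in the table. Every item has $\Psi(G/H) = \GR{C}{r}$ and $\Psi(H/G_{00}) = \GR{A}{r-1}$, and in the standard realisation of $\GR{C}{r}$ as $\{\pm e_i \pm e_j : i\ne j\} \cup \{\pm 2 e_i\}$ the $\GR{A}{r-1}$-subsystem appears as $\{\pm(e_i - e_j) : i < j\}$. Thus the ``extra'' short roots of $\Psi(G/H)$, i.e., those carrying the full multiplicity $m_{\mathrm{short}}$ but not lying in $\Psi(H/G_{00})$, are exactly $\pm(e_i + e_j)$ for $i \ne j$.

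I would then pick the $[r/2]$ pairwise orthogonal extra short roots $\mu_k := e_{2k-1} + e_{2k}$ ($k = 1, \dots, [r/2]$) and set $\tilde\ce := \bigcap_k \Ker(\mu_k) \subset \ce_{11}$, of dimension $r - [r/2]$. A vector belongs to the $\BQ$-span $V$ of $\mu_1, \ldots, \mu_{[r/2]}$ if and only if, for every $k$, its coefficients at $e_{2k-1}$ and $e_{2k}$ coincide. No root $\pm(e_i - e_j)$ has this property, so $V \cap \Psi(H/G_{00}) = \varnothing$; hence $\tilde\ce$ still contains $G_{00}$-regular elements, as required. The same coefficient-pairing argument also excludes the long roots $\pm 2 e_i$ and any ``off-pair'' short roots $\pm(e_i \pm e_j)$ with $i, j$ in distinct pairs, so the only restricted roots of $\Psi(G/H)$ vanishing on $\tilde\ce$ are $\pm \mu_1, \ldots, \pm \mu_{[r/2]}$.

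It follows that $\z_\g(\tilde\ce) = \z_\g(\ce_{11}) \oplus \bigoplus_{k=1}^{[r/2]}(\g_{\mu_k} \oplus \g_{-\mu_k})$. Since $\{\sigma_1, \sigma_2\}$ is a dyad (being part of a Jordan triad, all three involutions are conjugate), each summand $\g_{\mu_k} \oplus \g_{-\mu_k}$ contributes exactly $m_{\mu_k} = m_{\mathrm{short}}$ to $\g_{10}$, by the argument in the proof of Proposition~\ref{prop:larger-comp}. Hence $\dim \z_\g(\tilde\ce)_{10} = [r/2]\cdot m_{\mathrm{short}}$, and substituting $\hat\ce := \z_\g(\tilde\ce)_{10} \times \tilde\ce \subset \fe$ into \eqref{eq:larger-comp-raznesenie} gives
$$
\dim \ov{G_{00} \cdot \hat\ce} \;=\; \dim\g_{11} + \bigl(\dim\hat\ce - \dim\ce_{11}\bigr) \;=\; \dim\g_{11} + (m_{\mathrm{short}} - 1)[r/2].
$$
Combined with $\dim\g_{11} = \dim\g(-1) = \dim\eus J$ and the isomorphism $\fe \simeq \fe(\eus J)$ supplied by Theorem~\ref{thm:jord-com-var}, this yields the claimed lower bound. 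I do not expect a genuine obstacle: the whole argument reduces to a short root-system calculation inside $\GR{C}{r}$, and the improvement over Proposition~\ref{prop:larger-comp} comes from replacing a single extra short root by a maximal orthogonal set of them, producing the factor $[r/2]$ in place of $1$.
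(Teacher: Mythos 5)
Your proof is correct and follows essentially the same route as the paper: both implement Remark~\ref{rmk:obv-modific}(2) by cutting $\ce_{11}$ down along a maximal set of short roots $\esi_i+\esi_j$ of $\GR{C}{r}$ that avoid the $\GR{A}{r-1}$-subsystem, and both arrive at $\dim\hat\ce-\dim\ce_{11}=(m_{\text{short}}-1)[r/2]$ via \eqref{eq:larger-comp-raznesenie}. The only (immaterial) difference is in the odd-$r$ case: the paper's choice $\tilde\ce=\{(\esi_i+\esi_{r+1-i})(x)=0\}$ also kills the long root $2\esi_{(r+1)/2}$, losing one dimension in $\tilde\ce$ but gaining one in $\z_\g(\tilde\ce)_{10}$, so the two choices give the identical bound.
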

\begin{proof}
Using Theorem~\ref{thm:jord-com-var}, we identify $\fe(\eus J)$ with the zero fibre of
the quadratic covariant $\g_{10}\times\g_{11}\to\g_{10}$ and work in the setting of Section~\ref{sect4}.
Let $\esi_1,\dots,\esi_r$ be the usual basis of $\mathfrak X(C_{11})\otimes \BQ$ such that
the roots of $\Psi(G/H)$ are $\pm\esi_i\pm\esi_j$ ($i\ne j$) and $\pm2\esi_i$. The roots in
$\Psi(H/G_{00})$ are $\pm(\esi_i-\esi_j)$. Therefore, $\g_{10}\oplus\g_{01}$ is the sum of root
spaces corresponding to $\pm(\esi_i+\esi_j)$ and $\pm2\esi_i$.
Set 
\[
    \tilde\ce=\{x\in \ce_{11}\mid (\esi_i+\esi_{r+1-i})(x)=0, \ \ i=1,2,\dots,
    \textstyle \left[\frac{r+1}{2}\right]\} .
\]
Then  $\dim\tilde\ce=[r/2]$, and we have $2[r/2]$ short roots of  $\g_{10}\oplus\g_{01}$
vanishing on $\tilde\ce$. Moreover, if $r$ is odd, then the long roots $\pm 2\esi_{[r+1/2]}$
also vanish on $\tilde\ce$. Therefore,
\[
\dim\z_\g(\tilde\ce)_{10}=\frac{1}{2}\dim\bigl(\z_\g(\tilde\ce)\cap(\g_{10}\oplus\g_{01})\bigr)=
\begin{cases}
 \text{$m_{\text{short}}{\cdot}\frac{r}{2}$} & \text{if $r$ is even} , \\
\text{$m_{\text{short}}{\cdot}\left[\frac{r}{2}\right]+1$} &  \text{if $r$ is odd} .
\end{cases}
\]
In both cases, this yields $\dim G_{00}{\cdot}(\z_\g(\tilde\ce)_{10}\oplus\tilde\ce)=
\dim\g_{11}+(m_{\text{\rm short}}-1)[r/2]$.
\end{proof}

For the Jordan algebra of all matrices (related to a Hermitian involution of $\sltn$), the above 
construction of $\tilde\ce$ gives exactly the subvariety of 
Example~\ref{ex:jordan-algebra-matrizen}. It is plausible that the lower bound of Proposition~\ref{prop:larger-comp2} provides the exact value of $\dim(\fe(\eus J))$.

\begin{rmk}
It is curious to observe that, for all Hermitian involutions leading to Jordan triads, the restricted root system is of type $\GR{C}{n}$; whereas, for all other Hermitian involutions, the restricted root
system $\Psi$ is of type $\GR{BC}{n}$. Namely, the symmetric pairs 
$\mathfrak{gl}_{n+m}\supset \gln\times\mathfrak{gl}_{m}\times\te_1$ ($n < m$) and
$\mathfrak{so}_{4n+2}\supset \mathfrak{gl}_{2n+1}$ lead to $\Psi\simeq \GR{BC}{n}$; and 
$\GR{E}{6}\supset\GR{D}{5}\times\te_1$ leads to $\Psi\simeq \GR{BC}{2}$.
\end{rmk}

\appendix
\section{Computations in classical Lie algebras} 
\label{app:A}

\noindent
Here we provide some computations related to the proof of Theorem~\ref{thm:strange-ineq}
for nilpotent elements in classical Lie algebras.

Let $\boldsymbol{\lb}=(\lb_1,\dots,\lb_s)$ be a partition 
and $e\in\gln$ a nilpotent element corresponding to $\boldsymbol{\lb}$, also denoted 
by $e\sim \boldsymbol{\lb}$. 
Then $\sum\lb_i=n$ and 
\beq        \label{gl-formula}
\dim(\gln)^e=n+2 \sum_{i< j} \min\{\lb_i,\lb_j\}, \quad  
\dim(\sln)^e=\dim(\gln)^e-1 .
\eeq
If $e$ is a nilpotent element in $\son$ or $\spn$, with respective parity conditions on
$\boldsymbol{\lb}$, then
\begin{gather}
\dim(\spn)^e=\frac{\dim(\mathfrak{gl}_{2n})^e+\#\{i \mid \lb_i \text{ is odd}\}}{2} , \label{sp-formula} \\
\dim(\son)^e=\frac{\dim(\gln)^e-\#\{i \mid \lb_i \text{ is odd}\}}{2} .  \label{so-formula}
\end{gather}
See \cite[(3.8)]{hess76} and  \cite[2.4]{KP82}.
Below, we consider several symmetric pairs with classical $\g$ and check that \eqref{eq:strange-ineq}
is satisfied for all nonzero nilpotent elements of $\g_0$. There is no need in considering only non-even nilpotent element in $\g_0$, since the computations go through without this assumption.

\subsection{$(\g,\g_0)=(\sln,\son)$}
If $e\in\son$ and $e\sim \boldsymbol{\lb}$, then using \eqref{gl-formula} and
\eqref{so-formula} yields
\[
 \dim\g_0^e=\frac{\dim(\gln)^e-\# \{i \mid \lb_i \text{ is odd}\}}{2} ,\ \ 
 \dim\g_1^e=\frac{\dim(\gln)^e+\# \{i \mid \lb_i \text{ is odd}\}}{2} -1 .
\]
Therefore,
$ \dim \g_0^e-\dim\g_1^e +(n-1)= n-\# \{i \mid \lb_i \text{ is odd}\}$.
Here the parity condition means that each even part of $\boldsymbol{\lb}$ occurs an even number of times. Since $e\ne 0$, i.e., $\boldsymbol{\lb}\ne (1,\dots,1)$, the minimal value is $2$, and
it is attained for  $\boldsymbol{\lb}=(3,1^{n-3})$.

\subsection{$(\g,\g_0)=(\spn,\gln)$}   \label{subs:spn-gln}
 If $e\in\gln$ and $e\sim \boldsymbol{\lb}$, then 
the partition of $e$ as element of $\spn$ is obtained by doubling $\boldsymbol{\lb}$,  i.e.,
each part $\lb_i$ is replaced with $(\lb_i,\lb_i)$.
Then 
$\dim\g_0^e=\dim(\gln)^e$ is given by \eqref{gl-formula}, and using \eqref{sp-formula}
yields 
$\dim\g_1^e=2\left[\frac{\lb_i+1}{2}\right] + 2 \sum_{i< j} \min\{\lb_i,\lb_j\}$.
Hence
\[
    \dim \g_0^e-\dim\g_1^e +n=2n - 2\left[\frac{\lb_i+1}{2}\right]= n- \#\{i \mid \lb_i \text{ is odd}\}.
\]
For $e\ne 0$, the minimal value $2$ is attained for $\boldsymbol{\lb}=(2,1^{n-2})$ or
$(3,1^{n-3})$.

\subsection{$(\g,\g_0)=(\sone,\gln)$}   \label{subs:son-gln}
If $e\in\gln$ and $e\sim \boldsymbol{\lb}$, then $\dim\g_0^e=\dim(\gln)^e$ is again
given by \eqref{gl-formula}, while using this time \eqref{so-formula}, we obtain
$\dim\g_1^e=2\left[\frac{\lb_i}{2}\right] + 2 \sum_{i< j} \min\{\lb_i,\lb_j\}$.
Hence the result is even better than in the previous case. Indeed, we have here
$\dim\g_0^e-\dim\g_1^e\ge 0$.

\subsection{$(\g,\g_0)=(\mathfrak{sl}_{n+m},\sln\times\mathfrak{sl}_m\times\te_1)$}   \label{subs:dva-sln}
Here $n,m\ge 1$.
A nilpotent element $e\in\g_0$ is determined by two partitions,
$e\sim (\boldsymbol{\lb}; \boldsymbol{\mu})=((\lb_1,\dots,\lb_k);(\mu_1,\dots,\mu_s))$.
Using \eqref{gl-formula}, we obtain 
\begin{gather*}
\dim\g_0^e=n+m-1+2 \sum_{i< j} \min\{\lb_i,\lb_j\}+2 \sum_{i< j} \min\{\mu_i,\mu_j\} ,
\\
\dim\g_1^e=2 \sum_{i, j} \min\{\lb_i,\mu_j\}  .
\end{gather*}
\begin{multline*}
\text{Therefore,}\quad \dim \g_0^e-\dim\g_1^e +(n+m-1)=
    \\  2\bigl(n+m-1+\sum_{i< j} \min\{\lb_i,\lb_j\}+ 
    \sum_{i< j} \min\{\mu_i,\mu_j\}-\sum_{i, j} \min\{\lb_i,\mu_j\}\bigr) .
\end{multline*}
Since $n=\sum_i\lb_i$, $m=\sum_j\mu_j$, and $\sum_{i< j} \min\{\lb_i,\lb_j\}=
\sum_{i\ge 2}(i-1)\lb_i$, half of the RHS equals
\[
     \cF(\boldsymbol{\lb}; \boldsymbol{\mu}):
     = \sum_{i=1}^k i\lb_i + \sum_{j=1}^s j\mu_j -1- \sum_{i=1}^k \sum_{j=1}^s \min\{\lb_i,\mu_j\}.
\]
Arguing by induction, we prove that $\cF(\boldsymbol{\lb}; \boldsymbol{\mu})\ge 0$ for all 
$\boldsymbol{\lb}$ and $\boldsymbol{\mu}$, and 
if $n+m\ge 3$, then $\cF(\boldsymbol{\lb}; \boldsymbol{\mu}) > 0$.

1$^o$.  First,
$\cF(1^n; 1^m)=(n-m)^2/2 +(n+m)/2 -1$, which is positive if $(n,m)\ne(1,1)$.

2$^o$. The inequality is easily verified, if $\boldsymbol{\lb}$ or $\boldsymbol{\mu}$ consists of only one part.  

3$^o$. Suppose that $k\ge 2$ and $s\ge 2$.
Write $\boldsymbol{\lb}=(\lb_1,\boldsymbol{\lb}')$ and $\boldsymbol{\mu}=(\mu_1,\boldsymbol{\mu}')$. Then
\begin{multline*}
   \cF(\boldsymbol{\lb}; \boldsymbol{\mu})=\cF(\boldsymbol{\lb}'; \boldsymbol{\mu}')+
   \max\{\lb_1,\mu_1\}+ \sum_{i\ge 2}(\lb_i-\min\{\lb_i,\mu_1\})+\sum_{j\ge 2}(\mu_j-\min\{\lb_1,\mu_j\})
   \\
   \ge \cF(\boldsymbol{\lb}'; \boldsymbol{\mu}')+\max\{\lb_1,\mu_1\} \ge \max\{\lb_1,\mu_1\}.
\end{multline*}
Here $ \max\{\lb_1,\mu_1\}$ arises as $\lb_1+\mu_1-\min\{\lb_1,\mu_1\}$.

We omit computations related to the remaining classical symmetric pairs $(\sltn,\spn)$,
$(\mathfrak{sp}_{2n+2m}, \spn\times \mathfrak{sp}_{2m})$, and
$(\mathfrak{so}_{n+m}, \son\times \mathfrak{so}_{m})$.

\vskip1ex
\noindent
{\small  {\bf Acknowledgements.} Part of this work was done while I was visiting the Max-Planck Institut f\"ur Mathematik (Bonn).}

\end{document}